\newtheorem{theorem}{Theorem}[section]
\newtheorem{thm}[theorem]{Theorem}
\newtheorem{cor}[theorem]{Corollary}
\newtheorem{lemm}[theorem]{Lemma}
\newtheorem{prop}[theorem]{Proposition}
\newtheorem{question}[theorem]{Question}
\theoremstyle{definition}
\newtheorem{definition-theorem}[theorem]{Definition-Theorem}
\newtheorem{defi}[theorem]{Definition}
\newtheorem{remk}[theorem]{Remark}
\newtheorem{exam}[theorem]{Example}
\newcommand{\Inv}{\operatorname{Inv}}
\newcommand{\fftors}{\mbox{\rm f-tors}\hspace{.01in}}
\newcommand{\Aa}{\operatorname{\mathcal A}\nolimits}
\newcommand{\TT}{\operatorname{\mathcal T}\nolimits}
\newcommand{\YY}{\operatorname{\mathcal Y}\nolimits}
\newcommand{\Cone}{\mathrm{Cone}}
\newcommand{\FF}{\operatorname{\mathcal F}\nolimits}
\newcommand{\PP}{\operatorname{\mathcal P}\nolimits}
\newcommand{\C}{\mathsf{res}}
\newcommand{\La}{\Lambda}
\newcommand{\stilt}{\mbox{\rm s-tilt}\hspace{.01in}}
\newcommand{\ww}{\mathbf{w}}
\newcommand{\cc}{\mathbf{c}}
\newcommand{\xx}{\mathbf{x}}
\newcommand{\br}[1]{\langle #1 \rangle}
\newcommand{\add}{\mathsf{add}\nolimits}
\newcommand{\id}{\operatorname{id}\nolimits}
\newcommand{\Hom}{\operatorname{Hom}\nolimits}
\newcommand{\Ext}{\operatorname{Ext}\nolimits}
\newcommand{\Tor}{\operatorname{Tor}\nolimits}
\newcommand{\supp}{\operatorname{supp}\nolimits}
\newcommand{\RHom}{\mathbf{R}\strut\kern-.2em\operatorname{Hom}\nolimits}
\def\mod{\mathsf{mod}}
\def\RHom{\mathop{\mathbb R\mathrm{Hom}}\nolimits}
\newcommand{\Fac}{\mathsf{Fac}\hspace{.01in}}
\newcommand{\Sub}{\mathsf{Sub}\hspace{.01in}}
\newcommand{\tors}{\mbox{\rm tors}\hspace{.01in}}
\newcommand{\torf}{\mbox{\rm torf}\hspace{.01in}}
\newcommand{\csort}{c\mbox{\rm -sort}\hspace{.01in}}
\newcommand{\comp}{c\mbox{\rm -antisort}\hspace{.01in}}
\newcommand{\wcsort}{\mbox{\rm b}c\mbox{\rm -sort}\hspace{.01in}}
\newcommand{\quo}{\mbox{\rm cof.quot}\hspace{.01in}}
\def\add{{\mathsf{add}}}
\begin{document}
\title{Torsion pairs for quivers and the Weyl groups}
\author{Yuya Mizuno}
\address{Faculty of Liberal Arts and Sciences, 	Osaka Prefecture University,
  1-1 Gakuen-cho, Naka-ku, Sakai, Osaka 599-8531, Japan}
\email{yuya.mizuno@las.osakafu-u.ac.jp}
\thanks{The first-named author is supported by Grant-in-Aid for JSPS Research Fellow 17J00652.  The second-named author is supported by an NSERC Discovery grant and the
Canada Reseach Chairs program.}

\author{Hugh Thomas}
\address{D\'epartement de math\'ematiques, Universit\'e du Qu\'ebec \`a Montr\'eal, Montr\'eal (Qu\'ebec), H2L 4H2 Canada}
\email{hugh.ross.thomas@gmail.com}
\thanks{}

\begin{abstract}
  We give an interpretation of the map $\pi^c$
  defined by Reading, which is a map from the elements of a Coxeter group
  to the $c$-sortable elements, in terms of the representation theory
  of preprojective algebras.
Moreover, we study a close relationship between $c$-sortable elements and torsion pairs, and give an explicit description of the cofinite torsion classes in the context of the Coxeter group. 
As a consequence, we give a proof of some conjectures proposed by Oppermann,
Reiten, and the second author.
\end{abstract}
\maketitle
\tableofcontents

\maketitle

\section{Introduction}
Preprojective algebras are a fundamental class of algebras, with important
connections to crystal basis theory \cite{KS,BK,BKT}, and which have also been
used to illuminate the structure of significant classes of cluster algebras
\cite{BIRS,GLS,L}.  Recently,
strong links have been discovered between preprojective algebras and
Weyl groups \cite{IR1,BIRS,M}, so that the representation theory of
preprojective algebras can be viewed as providing a categorification of the structure
of the corresponding Weyl group.

More specifically, let $Q$ be a quiver without oriented cycles, let
$\Lambda$ be the corresponding preprojective algebra, and let $W$ be the
corresponding Weyl group.  {(See also {\bf Notation} at the end of this section for further notation.)}
\cite{IR1} introduced a class of ideals
in $\Lambda$ corresponding bijectively to the elements of $W$; we denote the
ideal corresponding to $w\in W$ by $I_w$ (see Theorem \ref{birs}).  Define $\Lambda_w=\Lambda/I_w$.
Then the {objects $I_w$} (respectively, $\Lambda_w$) naturally parametrize a collection of torsion classes (respectively, torsion free classes) in $\mod\La$ as follows. 

\begin{thm}\cite{BIRS}
Let $\tors\La$ $(respectively, \torf\La)$ be the set of torsion classes $($respectively, torsion free classes$)$ in $\mod\La$. 
Then we have injective maps $$W\longrightarrow\tors\La,\ w\mapsto\Fac I_w\ \ \textnormal{and}\ \ W\longrightarrow\torf\La,\ w\mapsto\Sub\La_w,$$
{where $\Fac X$ $(respectively, \Sub X)$ denotes by 
  the category of factor modules $($respectively, submodules$)$ of {direct sums of copies of $X$}.}
\end{thm}


The Weyl group $W$ also has a nice connection to torsion free classes in $\mod KQ$ \cite{AIRT,T}.
Specifically, we have the following result.

\begin{thm}\cite{AIRT,T}\label{intro-airt}
Let $\csort W$ be the set of $c$-sortable elements of $W$.  
Then there is a bijection $$\csort W\longrightarrow\{\textnormal{finite torsion free classes of }\mod KQ\},\ \ \ w\mapsto\C\La_w.$$
\end{thm}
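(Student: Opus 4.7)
The plan is to prove the bijection by using the recursive structure of $c$-sortable elements and matching it with a recursive structure on finite torsion-free classes in $\mod KQ$.

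First, I would check that the map $w \mapsto \mathsf{res}\, \Lambda_w$ lands in the claimed target. Here $\mathsf{res}$ denotes restriction of $\Lambda$-modules along the natural inclusion $KQ \hookrightarrow \Lambda$. Applying $\mathsf{res}$ to the torsion-free class $\Sub \Lambda_w \subseteq \mod \Lambda$ (given by the previous theorem) produces a subcategory of $\mod KQ$, and I would verify that it is a torsion-free class. Finiteness follows from $\Lambda_w$ being a finite-dimensional $\Lambda$-module: it has only finitely many $KQ$-submodules up to isomorphism, so $\Sub \Lambda_w$ and hence $\mathsf{res}\, \Lambda_w$ contain only finitely many indecomposables.

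For bijectivity, the strategy is induction on the length $\ell(w)$, exploiting the recursive definition of $c$-sortable elements. Writing $c = s_1 s_2 \cdots s_n$, a $c$-sortable element $w$ falls into one of two cases: either $s_1$ appears at the left of a reduced expression, in which case $s_1 w$ is sortable for an appropriately modified Coxeter element, or else $s_1$ does not belong to the support of $w$, in which case $w$ is sortable inside a parabolic subgroup. This dichotomy parallels a recursive decomposition of finite torsion-free classes $\mathcal F \subseteq \mod KQ$ according to whether the simple module $S_1$ at the source vertex $s_1$ lies in $\mathcal F$: if not, $\mathcal F$ is a finite torsion-free class for the sub-quiver obtained by deleting $s_1$; if so, $\mathcal F$ is controlled by a finite torsion-free class in the perpendicular category $S_1^\perp$, which is equivalent to the module category of a smaller hereditary algebra. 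I would then verify that the two recursions agree under $w \mapsto \mathsf{res}\, \Lambda_w$ by analyzing how $\Lambda_w$ changes under $w \mapsto s_1 w$, which at the level of ideals corresponds to passing between $I_w$ and $I_{s_1} I_w$.

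The main obstacle is the compatibility step: showing precisely that the ideal-theoretic recursion on the $W$-side translates under $\mathsf{res}$ into the torsion-free class recursion on the $\mod KQ$-side. This requires a careful comparison between the layered structure of $\Lambda_w$ as a $\Lambda$-module and its underlying $KQ$-structure, particularly in the case where $S_1 \in \mathcal F$ and one must identify the restricted subcategory inside $S_1^\perp$ with what is obtained from the reduced sortable element. Once this compatibility is in place, induction closes the argument: injectivity follows because distinct $c$-sortable elements produce distinct recursive branches and thus distinct torsion-free classes, and surjectivity follows because every finite torsion-free class is reachable by the same branching process.
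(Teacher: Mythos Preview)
Your recursive strategy is the right shape and is essentially the approach of \cite{T}, one of the two cited sources. The paper's own exposition, however, follows \cite{AIRT}: the result appears as Corollary~\ref{sortable bij}, deduced from Theorems~\ref{equality} and~\ref{c implies tilt}. That route proceeds via layers: for a $c$-sortable word $\ww$ one shows each layer $L^j_\ww$ is an indecomposable $KQ$-module, collects the last layer at each vertex into a support tilting module $T_\ww$, and proves $\Sub T_\ww=\add\{L^1_\ww,\dots,L^l_\ww\}=\C\La_w$; surjectivity and injectivity come from showing that every finite torsion-free class is $\Sub T_\ww$ for a unique $c$-sortable $\ww$. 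The layer approach yields explicit generators and ties the bijection directly to support tilting theory; your recursion is closer to what the paper later develops in Section~3 (Propositions~\ref{sw<w} and~\ref{sw>w}) for arbitrary $w$.

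Two points in your outline need correction. First, when $S_1\in\FF$, the recursion through the perpendicular category $S_1^\perp$ does not work: $S_1^\perp$ has rank $n-1$, which mismatches the group side where $s_1w$ remains an $s_1cs_1$-sortable element of the same rank-$n$ group $W$, and in any case $\FF\mapsto\FF\cap S_1^\perp$ is not injective (for $Q\colon 1\to 2$ both $\add S_1$ and $\add\{S_1,P_2\}$ meet $S_1^\perp=\add S_2$ trivially). The correct move is the BGP reflection functor to $\mod KQ'$ with $Q'=\mu_1(Q)$, giving $\FF=\add\{R^-(\FF'),S_1\}$ for a unique torsion-free $\FF'$ in $\mod KQ'$, exactly as in Lemma~\ref{add torsion-free}. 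Second, your finiteness argument is invalid: a finite-dimensional module can have infinitely many non-isomorphic submodules, so finite-dimensionality of $\La_w$ alone does not bound $\Sub\La_w\cap\mod KQ$. What is immediate is that $\C\La_w=\add(\La_w)_{KQ}$ has finitely many indecomposables; the substance lies in showing this subcategory is already closed under submodules, which is precisely what the layer description delivers.
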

Here, for $X$ a $\Lambda$-module, we
write $\C X$ for the corresponding $KQ$-module defined by restriction of scalars (Definition \ref{restriction}). 
The $c$-sortable elements (Definition \ref{def c-sort}) 
were originally defined by Reading \cite{R2} for the purpose of analyzing
Coxeter group combinatorics associated to cluster algebras and
noncrossing partitions.

Then, there is a natural map from torsion free classes of $\mod\La$ to
torsion free classes of $\mod KQ$, sending $\Sub\La_w$ to the torsion free class $\Sub\La_w\cap\mod KQ$, which turns out to be finite. 
Then by Theorem \ref{intro-airt}, on the level of the Weyl group, this gives us a map from $W$ to the $c$-sortable elements of $W$.  
Our first main result recognizes this map
as one that is
already well-studied in combinatorics.

\begin{thm}\label{intro1}
{We have the following commutative diagram$:$
$$\xymatrix@C20pt@R5pt{
W\ar[rrr]^{\pi^c(-)} \ar[ddd]_{\Sub\La_{-}} &&&{\csort W}\ar[ddd]^{ \C\La_{-}}\\
&&&\\
&&&\\
\torf\La\ar[rrr]^{-\cap\mod KQ}&&&\torf KQ}$$
}
%
\end{thm}

The map $\pi^c$ was
originally introduced by Reading \cite{R2} in purely combinatorial terms: $\pi^c(w)$ is the unique maximal $c$-sortable
element below $w$ (where ``maximal'' is understood with respect to the weak
order on $W$).

The map $\pi^c$ is of particular interest for
the following reason \cite{RS2}.  Let $w$ be a $c$-sortable element of $W$ and let 
$(\pi^c)^{-1}(w)$ denote its inverse image. 
If this inverse image has a maximum element, then  
the union of the
Coxeter chambers corresponding to elements $u\in (\pi^c)^{-1}(w)$
yields a cone in the $g$-vector fan for the cluster algebra whose initial
$B$-matrix is encoded by $Q$.  If the inverse image does not have a maximum
element, then the
corresponding union of Coxeter chambers is the intersection of a cone in the
$g$-vector fan with the Tits cone. All maximal-dimensional cones in the
$g$-vector fan which intersect the Tits cone arise in one of these two ways.

On the other hand, the correspondence between $W$ and $\{I_w\}_{w\in W}$ was enriched in
\cite{ORT} as follows. 

\begin{thm}\cite{ORT}
Let $\quo KQ$ be the set of cofinite (i.e.,there are only finitely many indecomposable modules which are not in the category) quotient closed subcategories of $\mod KQ$. 
Then there is a bijection $$W\longrightarrow\quo KQ,\ \ \ w\mapsto\overline{\C I_w},$$
where $\overline{\C I_w}$ is the additive category generated by $\C I_w$ together with all non-preprojective indecomposable $KQ$-modules. 
\end{thm}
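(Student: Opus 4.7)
I would prove the bijection in three steps: well-definedness, injectivity, and surjectivity.

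\emph{Well-definedness.} I would check that $w \mapsto \overline{\C I_w}$ lands in cofinite quotient closed subcategories. For cofiniteness, I would use that $\C \Lambda$ decomposes as a $KQ$-module into a sum of preprojective indecomposables (with each preprojective indecomposable appearing as a summand, via the decomposition $\C\Lambda = \bigoplus_i \C(\Lambda e_i)$), together with the fact that $\Lambda_w = \Lambda / I_w$ is finite-dimensional (see \cite{IR1}, using a reduced expression for $w$). Consequently, $\C I_w$ differs from $\C \Lambda$ by only finitely many preprojective summands, so only finitely many preprojective indecomposables are absent from $\overline{\C I_w}$, while all non-preprojectives are included by construction. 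For quotient-closure, I would note that a $KQ$-quotient of a summand of $\C I_w$ is either non-preprojective---hence automatically in $\overline{\C I_w}$---or preprojective, in which case one exploits that a preprojective $KQ$-module carries a canonical (nilpotent) $\Lambda$-module structure, so that the $KQ$-quotient lifts to a $\Lambda$-quotient of $I_w$, landing in $\C(\Fac I_w) \subseteq \overline{\C I_w}$ by the torsion-class property of $\Fac I_w$ in $\mod \Lambda$.

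\emph{Injectivity.} If $\overline{\C I_w} = \overline{\C I_{w'}}$, then the two subcategories contain the same preprojective indecomposables, so the complementary finite sets of missing preprojectives coincide. Reassembling these missing indecomposables with their canonical $\Lambda$-module structures recovers $\Lambda_w$ and $\Lambda_{w'}$ respectively; matching them forces $I_w = I_{w'}$, and the bijection $w \mapsto I_w$ from Theorem \ref{birs} then gives $w = w'$.

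\emph{Surjectivity, and the main obstacle.} Given a cofinite quotient-closed $\mathcal{X} \in \quo KQ$, I must construct $w \in W$ with $\overline{\C I_w} = \mathcal{X}$. Let $\mathcal{S}$ be the finite set of preprojective indecomposables not in $\mathcal{X}$; quotient-closure of $\mathcal{X}$ translates into the property that if $N \in \mathcal{S}$ and $M$ is preprojective with $M \twoheadrightarrow N$, then $M \in \mathcal{S}$. I would assemble $\mathcal{S}$, equipped with its canonical $\Lambda$-structures, into a finite-dimensional $\Lambda$-module $M_\mathcal{S}$, and then identify $M_\mathcal{S}$ with $\Lambda_w$ for some $w$ via the Iyama--Reiten parametrization. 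The real difficulty is precisely this last identification: characterizing those finite sets of preprojectives that arise as the indecomposable summands of $\C \Lambda_w$ for some $w \in W$. I would address this by an induction on the length of $w$, analyzing how $I_w$ and $\Lambda_w$ transform under right-multiplication by simple reflections (i.e.\ following the reduced-expression description of $I_w$), and matching the resulting recursion against the closure structure imposed on $\mathcal{S}$ by quotient-closure of $\mathcal{X}$.
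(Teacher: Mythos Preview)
This theorem is quoted from \cite{ORT} as background; the present paper does not supply its own proof, so there is nothing in this paper to compare your argument against directly. The paper does, however, record the key structural ingredient of the \cite{ORT} proof (Theorem~\ref{ORT-thm}): the missing indecomposable preprojectives of $\overline{\C I_w}$ are exactly those corresponding to the positions of the leftmost reduced word $\underline{w}$ inside $\underline{c}^\infty$. That explicit combinatorial description is what drives both injectivity and surjectivity in \cite{ORT}, and your sketch bypasses it entirely.

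Your proposal has genuine gaps. For quotient-closure, you assert that a $KQ$-surjection from a summand of $(I_w)_{KQ}$ onto a preprojective module $Y$ ``lifts to a $\Lambda$-quotient of $I_w$''; but a $KQ$-homomorphism between preprojective modules, each given its canonical $\Lambda$-structure, is not in general a $\Lambda$-homomorphism, so this lift requires an argument you have not supplied. For injectivity, knowing that $\overline{\C I_w}$ and $\overline{\C I_{w'}}$ have the same indecomposable objects does not obviously let you ``reassemble'' $\Lambda_w$: the summands of $(\Lambda_w)_{KQ}$ occur with multiplicities, and it is not clear a priori that the complement of $\C I_w$ in the preprojective component coincides with $\C\Lambda_w$, nor that the $\Lambda$-structure on $\Lambda_w$ is recoverable from the $KQ$-isomorphism types alone. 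For surjectivity your inductive plan is too vague to evaluate. The route taken in \cite{ORT} avoids all of these difficulties by proving the leftmost-word description directly via the chain $\Lambda\supset I_{u_1}\supset I_{u_2}I_{u_1}\supset\cdots$ and the identification of the layers with specific preprojective $KQ$-modules; once that is in hand, the bijection reduces to the combinatorial bijection between $W$ and leftmost subwords of $\underline{c}^\infty$.
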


Our investigation has one of its primary origins in the following natural questions and the related conjectures posed in \cite[Conjecture 11.1,11.2]{ORT}.

\begin{question}\label{que 2} 
\begin{itemize}
\item[(a)] When is $\overline{\C I_w}$ a torsion class of $\mod KQ$ for $w\in W$ ? 
\item[(b)] When $\overline{\C I_w}$ is a torsion class, how can we relate $w$ to a $c$-sortable element $x$ which provides the corresponding finite torsion free class $\C\La_{x}$ ?
\end{itemize}
\end{question}

To give an answer, we give the following definition.

\begin{defi}
\label{intro well}
\begin{itemize}
\item[(a)]{If $Q$ is a non-Dynkin quiver, then} 
a $c$-sortable element $x$ is called \emph{bounded} 
if there exists a positive integer $N$ such that $x\leq c^N$.
{If $Q$ is a Dynkin quiver, then}, we regard any $c$-sortable element as bounded. 
We denote by $\wcsort W$ the set of bounded $c$-sortable elements. 
\item[(b)]  Let $x$ be a $c$-sortable element. If there exists a maximum element amongst $w\in W$ satisfying $\pi^c(w)=x$, then we denote it by $\widehat{x}^c=\widehat{x}$ and call it \emph{$c$-antisortable}, following the definition from
  \cite{RS1}. We denote by $\comp W$ the set of $c$-antisortable elements of $W$. 
\end{itemize}
\end{defi}

In connection to these questions, it becomes important to know for which
$c$-sortable elements $w$ there is a maximum element in $(\pi^c)^{-1}(w)$.
This question, though purely combinatorial,
has not been addressed previously in the literature.  
Our second main result provides complete answers for these  questions.

\begin{thm}
We have the following {commutative diagram of} bijections:

$$\xymatrix@C20pt@R1pt{
& &\{\textnormal{cofinite torsion pairs of }\mod KQ\}\ar@{<->}[rrdddd]\ar@{<->}[lldddd]  &  &   \\
  & &(\overline{\C I_{\widehat{x}}},\C \La_x) & &   \\
&&&&\\
&&&&\\
&&&&\\
\wcsort W\ar@<0.5ex>[rrrr]^{\widehat{(-)}} & & &  &  \comp W\ar@<0.5ex>[llll]^{\pi^c(-)}  \\ 
x& & &  &\widehat{x}
}$$
{In particular, a $c$-sortable element $x$ is bounded if and only if it admits a maximum element in $\{w\in W\ |\ \pi^c(w)=x\}$.}
\end{thm}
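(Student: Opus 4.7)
The plan is to establish the triangle of bijections by combining Theorem~\ref{intro1} with the AIRT-T bijection (Theorem~\ref{intro-airt}) and the ORT bijection, translating the combinatorial condition of boundedness into the categorical language of torsion pairs. Given a cofinite torsion pair $(\TT,\FF)$ in $\mod KQ$, finiteness of $\FF$ together with Theorem~\ref{intro-airt} produces a unique $c$-sortable $x$ with $\FF = \C\La_x$, while cofiniteness and quotient-closure of $\TT$ combined with the ORT bijection produce a unique $y \in W$ with $\TT = \overline{\C I_y}$. Conversely, starting from a bounded $c$-sortable $x$, one would construct $\widehat{x}$ and the pair $(\overline{\C I_{\widehat{x}}},\C\La_x)$. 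This reduces the theorem to three claims: (a) $y$ equals the maximum $\widehat{x}$ of the fiber $(\pi^c)^{-1}(x)$; (b) this maximum exists precisely when $x$ is bounded; and (c) $(\overline{\C I_{\widehat{x}}},\C\La_x)$ is an honest torsion pair.

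For (a), I would lift everything to $\mod\La$. For any $w\in W$ the pair $(\Fac I_w,\Sub\La_w)$ is a torsion pair in $\mod\La$, and by Theorem~\ref{intro1} its intersection with $\mod KQ$ has torsion-free part $\C\La_{\pi^c(w)}$. Applied to $y$, this forces $\pi^c(y) = x$, so $y \in (\pi^c)^{-1}(x)$. To see $y$ is the maximum of the fiber, I would argue that any $w \in (\pi^c)^{-1}(x)$ with $w > y$ in the weak order would produce a strict inclusion $\Sub\La_w \subsetneq \Sub\La_y$ in $\mod\La$ while still restricting to $\C\La_x$, and then translate this into a strict containment $\overline{\C I_w} \supsetneq \overline{\C I_y}= \TT$ of cofinite quotient-closed subcategories compatibly paired with $\C\La_x$, contradicting that $\TT$ is the torsion class of the given torsion pair.

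For (b), in the Dynkin case $W$ is finite, so fibers automatically have maxima and boundedness holds by convention in Definition~\ref{intro well}. In the non-Dynkin case, I would argue that the existence of $\widehat{x}$ is equivalent to the existence of a largest torsion class in $\mod\La$ whose torsion-free class restricts to $\C\La_x$, and in turn to cofiniteness of ${}^\perp\C\La_x$ in $\mod KQ$. This cofiniteness is then to be matched with the combinatorial condition $x \le c^N$ by analyzing which indecomposables of $\mod KQ$ can lie outside $\C\La_x$, using the explicit structure of $\C I_{c^N}$ and the fact that $\pi^c$ is order-preserving and idempotent~\cite{R2} to confine the fiber inside $[e,c^N]$ in the bounded direction and conversely to exhibit $\widehat{x}$ in some $[e,c^N]$.

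The main obstacle is (c). The vanishing $\Hom(\overline{\C I_{\widehat{x}}},\C\La_x) = 0$ follows from $\Hom(\Fac I_{\widehat{x}},\Sub\La_{\widehat{x}})=0$ in $\mod\La$ on the summands coming from $\C I_{\widehat{x}}$, together with the fact that non-preprojective $KQ$-modules admit no maps to modules in $\C\La_x$ (which are preprojective by construction). Closure of $\overline{\C I_{\widehat{x}}}$ under extensions would combine closure of $\Fac I_{\widehat{x}}$ under extensions in $\mod\La$ with the hereditary observation that in $\mod KQ$ any extension involving a non-preprojective indecomposable is itself non-preprojective and thus automatically lies in $\overline{\C I_{\widehat{x}}}$. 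Finally, the torsion decomposition of an arbitrary $M \in \mod KQ$ would be produced by lifting $M$ along a suitable projection from $\mod\La$, applying the canonical decomposition with respect to $(\Fac I_{\widehat{x}},\Sub\La_{\widehat{x}})$, and restricting back, with the compatibility of this construction with restriction of scalars being the technical heart of the argument.
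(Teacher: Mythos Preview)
Your overall decomposition into claims (a), (b), (c) is reasonable, and much of the strategy for (a) and the easy direction of (b) is close to the paper's. But there is a genuine gap in the hard direction of (b), and (c) is set up in a way that runs into circularity.

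\textbf{The main gap: the implication $\widehat{x}$ exists $\Rightarrow$ $x$ bounded.} You write that you would ``argue that the existence of $\widehat{x}$ is equivalent to \dots\ cofiniteness of ${}^\perp\C\La_x$'' and then ``match'' this with $x\le c^N$ by confining the fiber inside $[e,c^N]$. This does not constitute an argument, and in fact this is by far the hardest part of the theorem. The paper devotes two full sections to it. First (Section~4) it proves purely combinatorial results about Cambrian cones: Lemma~\ref{lemmatwo} (the fiber is finite iff $\widehat{y}^c$ exists), Proposition~\ref{inductive} (existence of $\widehat{y}^c$ descends along covers of $c$-sortables), and Lemma~\ref{max is max} (maximal in the fiber implies maximum). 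Second (Section~6, Proposition~\ref{special case}) it shows that a \emph{minimal} unbounded $c$-sortable $x$ has infinite fiber, via an analysis of the infinite leftmost word for ${}^\perp(\C\La_x)\cap\PP$ and a limiting version of Proposition~\ref{C-facI}. These two ingredients combine: given unbounded $x$, descend to a minimal unbounded $x_k\le x$, get $\sharp(\pi^c)^{-1}(x_k)=\infty$, then Proposition~\ref{inductive} forbids $\widehat{x}^c$. None of this is visible in your sketch.

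\textbf{On (c): circularity.} You propose to show directly that $\overline{\C I_{\widehat{x}}}$ is extension closed by reducing to extension closure of $\Fac I_{\widehat{x}}$ in $\mod\La$. But the identification of $\overline{\C I_{\widehat{x}}}$ with $\Fac I_{\widehat{x}}\cap\mod KQ$ is exactly Proposition~\ref{C-facI}, whose proof \emph{assumes} $\overline{\C I_w}$ is already a torsion class. The paper avoids this entirely: for bounded $x$, the torsion class ${}^\perp(\C\La_x)$ is cofinite (Lemma~\ref{c-bounded}), hence by the ORT bijection equals $\overline{\C I_w}$ for some $w$; Theorem~\ref{torsion=pi} then forces $\pi^c(w)=x$, and part~(a) gives $w=\widehat{x}$. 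So extension closure of $\overline{\C I_{\widehat{x}}}$ is obtained \emph{for free}, and your ``technical heart'' disappears.

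\textbf{On (a): minor issues.} Your inclusions are reversed: $w>y$ gives $\Sub\La_w\supsetneq\Sub\La_y$ and $\overline{\C I_w}\subsetneq\overline{\C I_y}$ (Proposition~\ref{orders}, Theorem~\ref{ORT-thm}). More importantly, your argument only treats $w$ comparable to $y$, so at best yields $y$ maximal in the fiber, not maximum. The paper argues instead that any $u$ with $\ell(u)>\ell(w)$ and $\pi^c(u)=\pi^c(w)$ gives $\overline{\C I_u}\supset{}^\perp(\C\La_{\pi^c(w)})=\overline{\C I_w}$, contradicting the count of removed indecomposables; this bounds the \emph{length} in the fiber, hence the fiber is finite, and Lemma~\ref{lemmatwo} supplies the maximum.
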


Here we call a torsion pair \emph{cofinite} if the torsion class is cofinite. 
Thus we can give a answer to Question \ref{que 2},  confirming a conjecture of \cite[Conjecture 11.1]{ORT}: the cofinite quotient-closed category $\overline{\C I_w}$ is a torsion class precisely if $w$ is $c$-antisortable.
Moreover, in this case the corresponding torsion free class is the one associated to $\pi^c(w)$, confirming a conjecture of \cite[Conjecture 11.2]{ORT}. 
Thus the theorem implies that these torsion pairs can be completely controlled by bounded $c$-sortable elements and $c$-antisortable elements. 
Moreover each category of $(\overline{\C I_{\widehat{x}}},\C \La_x)$ can be described in the context of the Coxeter group and we give explicit correspondences (Theorem \ref{ORT-thm} and \ref{equality}). 
\\

\textbf{Notation.}
Let $K$ be an algebraically closed field. We denote by $D:=\Hom_K(-,K)$. 
Modules mean right modules. For a $K$-algebra $\La$, we denote by 
$\mod\Lambda$ the category of finite dimensional right $\Lambda$-modules. 
Let $X$ be a $\La$-module. 
We denote by $\add X$ (respectively, $\Sub X$, $\Fac X$) the full subcategory whose objects are direct summands (respectively,  submodules, factor modules) of finite direct sums of copies of $X$. \\

\textbf{Acknowledgements.}
The first author would like to thank Osamu Iyama and Takahide Adachi for their valuable comments and stimulating discussions. The second author would like
to thank Steffen Oppermann and Idun Reiten for their collaboration on
\cite{ORT}.  Both authors would also like to express their appreciation to
the Mathematisches Forschungsinstitut Oberwolfach, where our collaboration
began, and to the referee, whose suggestions substantially improved the paper.

\section{Background}

Throughout this paper, let $Q$ be a finite connected acyclic quiver with vertices $Q_0=\{1,\ldots, n\}$. We always assume for simplicity that $Q_0$ are admissibly
numbered, that is, if we have an arrow $j\to i$, then $j<i$.

\subsection{Coxeter groups}\label{back-cg}

\begin{defi}
The \emph{Coxeter group} $W$ associated to $Q$ is defined by the generators $S:=\{s_1,\ldots,s_n\}$ and relations
\begin{itemize}
\item[$\bullet$] $s_i^2=1$,
\item[$\bullet$] $s_is_j=s_js_i$ if there is no arrow between $i$ and $j$ in $Q$,
\item[$\bullet$] $s_is_js_i=s_js_is_j$ if there is precisely one arrow between $i$ and $j$ in $Q$.
\end{itemize}

We denote by $\ww$ a word, that is, an expression in the free monoid generated by $s_i$ for $i\in Q_0$ and $w$ its equivalence class in the Coxeter group $W$. 
Each element $w\in W$ can be written in the form $\ww=s_{u_1}\cdots s_{u_l}$ and,  
if $l$ is minimal among all such expressions for $w$, then $l$ is called the \emph{length} of $w$. In this case, we write $\ell(w)=l$ and we call $s_{u_1}\cdots s_{u_l}$ a \emph{reduced expression}. 
An element $c=s_{u_1}\ldots s_{u_l}$ is called a \emph{Coxeter element} if $l=n$ and $\{u_1,\ldots, u_l\}=\{1,\ldots ,n\}$. 
{A Coxeter element is called \emph{admissible} if $s_j$ precedes $s_i$ in
  any word for $c$ 
  whenever there is an arrow from $j$ to $i$. The assumption that $Q_0$ is admissibly
  numbered is equivalent to the assertion that $s_{1}\ldots s_{n}$ is admissible.}
\end{defi}
We note that our starting point throughout this paper is a quiver $Q$;
the corresponding Cartan matrix is therefore symmetric, which explains
the fact that the braid relations of $W$ are all of the above form.

\begin{defi}\label{def c-sort}
Let $c$ be a Coxeter element. 
Fix a reduced expression of $c$ and regard $c$ as a reduced word.
For $w\in W$, we denote the support of $w$ by $\supp(w)$, that is, the set of generators occurring in a reduced expression of $w$.

We call an element $w\in W$ \emph{c-sortable} if there exists a reduced expression of $w$ of the form $\ww=c^{(0)}c^{(1)}\ldots c^{(m)}$, where all $c^{(t)}$ are subwords of $c$ whose supports satisfy 
$$\supp(c^{(m)})\subset \supp(c^{(m-1)})\subset \ldots \subset \supp(c^{(1)})\subset \supp(c^{(0)})\subset Q_0.$$
\end{defi}

Let $v,w\in W$. We write $v\leq w$ if there exist $s_{u_1},\ldots, s_{u_l}$ such that $w=vs_{u_1}\ldots s_{u_l}$ and $\ell(vs_{u_1}\ldots s_{u_j})=\ell(v)+j$ for $0\leq j\leq l$. 
We call $\leq$ the \emph{$($right$)$ weak order}. 

For the generators $S=\{s_1,\ldots,s_n\}$, we let $\langle s\rangle:=S\setminus \{s\}$ and denote  
$W_{\langle s\rangle }$ by the subgroup of $W$ generated by $\langle s\rangle$.
For any $w\in W$, there is a unique factorization
$w=w_{\langle s\rangle}\cdot {}^{\langle s\rangle}w$ 
maximizing $\ell(w_{\langle s\rangle})$ for $w_{\langle s\rangle}\in W_{\langle s\rangle }$ and $\ell(w_{\langle s\rangle})+\ell({}^{\langle s\rangle}w)=\ell(w)$ 
\cite[section 2.4]{BB}. 

Then we give the following notion introduced by Reading \cite{R2}.

\begin{defi}\label{map pi}
Let $c$ be a Coxeter element and let $s$ be initial in $c$. Then, define 
$\pi^c(\id)=\id$ and, for each $w\in W$, we define 
\[
\pi^c(w):=\left\{\begin{array}{ll}
s\pi^{scs}(sw) &\mbox{if}\ \ell(sw)<\ell(w)\\
\pi^{sc}(w_{\langle s\rangle }) &\mbox{if}\  \ell(sw)>\ell(w).
\end{array}\right.\]
\end{defi}

Then this map gives the following the result.

\begin{thm}\cite[Proposition 3.2]{R3}\cite[Corollary 6.2]{RS2}\label{pai}
For any $w\in W$, 
$\pi^c(w)$ is the unique maximal $c$-sortable element below $w$ in the weak order.
\end{thm}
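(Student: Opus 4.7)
The plan is to prove the theorem by induction on $(\mathrm{rank}(W),\ell(w))$ in lexicographic order, mirroring the structure of Definition \ref{map pi}. The key technical tool is a recursive characterization of $c$-sortable elements in terms of the initial letter $s$ of $c$: $w$ is $c$-sortable if and only if either (i) $s\leq w$ and $sw$ is $scs$-sortable, or (ii) $s\not\leq w$, $w\in W_{\langle s\rangle}$, and $w$ is $sc$-sortable in the parabolic subgroup $W_{\langle s\rangle}$. I would prove this characterization by analyzing a $c$-sorting factorization $\ww=c^{(0)}c^{(1)}\cdots c^{(m)}$: since $s$ is initial in $c$, whenever $s$ occurs in some $c^{(i)}$ it must be the first letter of that subword, and nesting of supports forces $s\in\supp(c^{(0)})$. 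This allows a direct translation between $c$-sortable reduced expressions for $w$ and $scs$-sortable reduced expressions for $sw$ in case (i), using that $scs=t_2\cdots t_n s$ when $c=st_2\cdots t_n$, and it forces $w\in W_{\langle s\rangle}$ in case (ii).

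With this characterization I would check by induction that $\pi^c(w)$ is well-defined, is $c$-sortable, and satisfies $\pi^c(w)\leq w$. Under the first clause of the definition ($s\leq w$), IH gives that $\pi^{scs}(sw)$ is $scs$-sortable; because $s$ sits in final position of $scs$, no $scs$-sortable reduced expression begins with $s$, so prepending $s$ increases the length by one and produces a $c$-sortable element by the characterization, while $\pi^c(w)\leq w$ follows by prepending $s$ to $\pi^{scs}(sw)\leq sw$. Under the second clause ($s\not\leq w$), IH on rank gives $\pi^{sc}(w_{\langle s\rangle})$ as $sc$-sortable in $W_{\langle s\rangle}$, hence $c$-sortable in $W$, and sitting below $w_{\langle s\rangle}\leq w$.

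For maximality, let $v$ be $c$-sortable with $v\leq w$. Three subcases are routine. If $s\not\leq w$, then $s\not\leq v$ by transitivity, the characterization forces $v\in W_{\langle s\rangle}$, so $v\leq w_{\langle s\rangle}$, and the rank-reduced IH gives $v\leq\pi^{sc}(w_{\langle s\rangle})=\pi^c(w)$. If $s\leq w$ and $s\leq v$, then $sv$ is $scs$-sortable, and from $v\leq w$ a direct computation in right weak order yields $sv\leq sw$; IH then gives $sv\leq\pi^{scs}(sw)$, and prepending $s$ (legal because neither side has $s$ as a left descent) gives $v\leq s\pi^{scs}(sw)=\pi^c(w)$.

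The remaining subcase, $s\leq w$ but $s\not\leq v$, is the main obstacle, since the induction does not apply directly. The cleanest resolution is to invoke Reading's theorem that $c$-sortable elements are closed under joins in the weak order: then $s\vee v$ is $c$-sortable, still lies below $w$, and now has $s$ as a left descent, so the preceding subcase gives $s\vee v\leq\pi^c(w)$, whence $v\leq\pi^c(w)$. The join-closure property is itself a nontrivial theorem requiring a parallel induction on cover relations in the weak order, and in a self-contained treatment one would interleave the two inductions. Uniqueness of the maximal $c$-sortable element below $w$ is then automatic, since a partially ordered set has at most one maximum.
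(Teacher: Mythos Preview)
The paper does not supply its own proof of this statement; it is quoted as background with proof deferred to the cited references \cite{R3} and \cite{RS2}. There is therefore no in-paper argument to compare against.

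That said, your sketch is a faithful reconstruction of Reading's approach. The recursive characterization of $c$-sortability via the initial letter $s$ of $c$ is exactly his key lemma, and the induction on rank and length is the same scheme. The delicate subcase you isolate, $s\leq w$ but $s\not\leq v$, is indeed where the work lies, and resolving it via join-closure of the $c$-sortable elements is the standard route. You correctly flag that this join-closure is itself a result of comparable depth; in \cite{R3} it is established alongside the present theorem by a parallel induction, and in the infinite-type treatment \cite{RS2} the whole argument is reorganized around the projection $\pi^c$ and its order-theoretic properties. One minor point worth making explicit: when you pass from $sv\leq\pi^{scs}(sw)$ back to $v\leq s\,\pi^{scs}(sw)$, you need that $s$ is not a left descent of $\pi^{scs}(sw)$; this follows because $\pi^{scs}(sw)\leq sw$ in right weak order, right weak order is containment of left inversion sets, and $s$ is not a left descent of $sw$.
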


For the rest of the paper, we fix $c=s_1\dots s_n$ to be an admissible Coxeter
element for $Q$.
One of the aims of this paper is to give a representation-theoretic interpretation of the map $\pi^c$.


\subsection{Preprojective algebras}

\begin{defi}\label{Preprojective algebras}We denote by $Q_1$ the set of arrows of a quiver $Q$. 
The preprojective algebra associated to $Q$ is the algebra
$$\La=K{Q^d}/\langle \sum_{a\in Q_1} (aa^*-a^*a)\rangle$$
where ${Q^d}$ is the double quiver of $Q$, which is obtained from $Q$ by adding for each arrow 
$a:i\rightarrow j$ in $Q_1$ an arrow $a^*:i\leftarrow j$ pointing in the opposite direction. 
\end{defi} 

We now recall an important relationship between preprojective algebras and the Coxeter groups  from \cite{IR1,BIRS}.

Let $\La$ the preprojective algebra of $Q$. 
We denote by $I_i$ the two-sided ideal of $\La$ generated by $1-e_i$, where $e_i$ is a primitive idempotent of $\La$ for $i\in Q_0$.  
We denote by $\langle I_1,\ldots,I_n\rangle$ the set of ideals of $\La$ which can be written as 
$I_{u_l}\cdots I_{u_1}$ for some $l\geq0$ and $u_1,\ldots,u_l\in Q_0$. Then we have the following result (see also \cite[Theorem 2.14]{M} in the Dynkin case).

\begin{theorem}\cite[Theorem III.1.9]{BIRS}\label{birs}
There exists a bijection $W\to\langle I_1,\ldots,I_n\rangle$. 
It is given by $w\mapsto I_w =I_{u_l}\cdots I_{u_1}$ for any reduced 
expression $w=s_{u_1}\cdots s_{u_l}$.
\end{theorem}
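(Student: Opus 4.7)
The plan is to verify well-definedness and bijectivity of the assignment $s_{u_1}\cdots s_{u_l}\mapsto I_{u_l}\cdots I_{u_1}$ in three stages: well-definedness on reduced expressions, surjectivity onto $\langle I_1,\ldots,I_n\rangle$, and injectivity. By Matsumoto's theorem, any two reduced expressions for a given $w\in W$ are related by a sequence of braid moves. Consequently, the map on reduced words descends to a well-defined function $W\to\langle I_1,\ldots,I_n\rangle$ as soon as we check, at the level of ideals of $\La$, the two braid relations
\[
I_iI_j=I_jI_i \quad \text{if $i,j$ are not joined in $Q$,} \qquad I_iI_jI_i=I_jI_iI_j \quad \text{if $i,j$ are joined by a single arrow.}
\]

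The commutation relation is essentially combinatorial: when $i$ and $j$ are non-adjacent, the generators of $I_i$ and of $I_j$ in the preprojective algebra act on disjoint parts of the doubled quiver, and one verifies $I_iI_j=I_jI_i$ by a direct calculation with generators modulo the mesh relation. The braid relation is the main technical input. The cleanest route is to regard each $I_i$ as a two-sided tilting $\La$-bimodule: one shows that $I_i$ is reflexive, has projective dimension at most one on each side, and that tensoring with $I_i$ categorifies the action of the simple reflection $s_i$ on the Grothendieck group. Then the braid identity $I_iI_jI_i\cong I_jI_iI_j$ follows from the corresponding braid relation between the associated tilting functors, and both sides cut out the same two-sided ideal of $\La$. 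This step is the main obstacle and uses the representation theory of $\La$ in an essential way.

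With well-definedness in place, surjectivity amounts to showing that an arbitrary product $I_{u_l}\cdots I_{u_1}$, with the word $s_{u_1}\cdots s_{u_l}$ possibly non-reduced, coincides with $I_w$ for some reduced expression. By the exchange condition in $W$, any non-reduced word can be transformed via braid moves into a word containing an adjacent repetition $s_is_i$. Combined with the braid-relation step above, it then suffices to prove $I_i\cdot I_i=I_i$, i.e.\ that $I_i$ is an idempotent ideal; this is immediate from $\La/I_i\cong S_i$ together with a short computation with a generating set of $I_i$. Finally, for injectivity one computes the $K$-dimension vector of $\La/I_w$ and identifies it with the sum of the positive roots in the inversion set of $w$ relative to the Weyl group action on the root lattice of $Q$. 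Since distinct elements of $W$ have distinct inversion sets, distinct $w$ give distinct ideals $I_w$, completing the bijection.
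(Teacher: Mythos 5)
The paper does not actually prove this statement: it is imported verbatim from [BIRS, Theorem III.1.9] (with [M, Theorem 2.14] invoked for the Dynkin case), so there is no in-paper argument to compare against. Your outline does follow the strategy of the original sources: verify the commutation and braid relations for the ideals $I_i$ so that $I_w$ is well defined on reduced expressions, use idempotency of $I_i$ together with Tits' word theorem for surjectivity onto $\langle I_1,\ldots,I_n\rangle$, and separate distinct $w$ by an invariant of $\La/I_w$. The surjectivity step is fine as you state it: a non-reduced word can be braid-moved to expose an adjacent repetition, and $I_i\cdot I_i=I_i$ (immediate from $(1-e_i)^2=1-e_i$) lets you shorten the word without changing the product of ideals, so after finitely many steps every element of $\langle I_1,\ldots,I_n\rangle$ is exhibited as $I_v$ for some reduced expression of some $v$.

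There are, however, genuine gaps. First, the braid relation $I_iI_jI_i=I_jI_iI_j$ is the entire technical content of the theorem, and your justification is circular: the ``braid relation between the associated tilting functors'' $-\otimes_\La I_i$ is essentially the identity you are trying to prove (one must also know $I_j\otimes_\La I_i\cong I_jI_i$, i.e.\ a Tor-vanishing statement), and even an abstract bimodule isomorphism $I_iI_jI_i\cong I_jI_iI_j$ does not yield equality of subsets of $\La$ unless you check the isomorphism is realized by multiplication into $\La$. In [IR1]/[BIRS] this step is a concrete computation, not a formal consequence. Second, the tilting-bimodule framework (each $I_i$ of projective dimension at most one, etc.) is only available when $Q$ is non-Dynkin; for Dynkin $Q$ the preprojective algebra is finite-dimensional self-injective, the $I_i$ are not tilting, and a separate argument (as in [M]) is needed, yet the theorem as used in this paper covers that case. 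Third, in the injectivity step you pass from ``distinct elements have distinct inversion sets'' to ``distinct elements give $\La/I_w$ of distinct dimension vectors'' without comment; a priori two different sets of positive roots can have the same sum. The step is rescued by the identity expressing the sum of the inversion set of $w$ as $\rho-w^{-1}(\rho)$ for the regular dominant weight $\rho$, whose $W$-stabilizer is trivial, but this needs to be said.
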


Note that the product of ideals is taken in the opposite order to the product of expression of $w$. This follows the convention of \cite{ORT,AIRT}. 


Next we briefly recall the main results of \cite{ORT}, which give a connection between path algebras, preprojective algebras and Coxeter groups.

\begin{defi}\label{restriction}
Let $\La$ be the preprojective algebra of $Q$. 
For a $\La$-module $X$, 
we denote by $X_{KQ}$ the $KQ$-module defined by restriction. 
Moreover we associate the subcategory 
$$\C X=\add X_{KQ} \cap \mod KQ.$$
We denote by $\overline{\C X}$ the additive category generated by $\C X$ together with all non-preprojective indecomposable $KQ$-modules. 
\end{defi}

Consider the infinite word $\cc^\infty:=\cc \cc 
\cc\ldots,$ where $\cc=s_1\ldots s_n$. 
For $w$, 
we take the lexicographically first reduced expression for $w$ in $\cc^\infty$ 
(more explicitly, among all the reduced 
expressions $s_{u_1}\dots s_{u_l}$ for $w$ in $\cc^\infty$, 
we choose the one such that $s_{u_1}$ is as far to the left as possible in $\cc^{\infty}$, and, among such expressions, $s_{u_2}$ is
as far to the left as possible, and so on
for each $s_{u_j}$).
It is uniquely determined and we denote it by $\underline{\ww}$.  
We call it the 
leftmost expression for $w$.  
By identifying $\cc^\infty$ with the indecomposable preprojective $KQ$-modules 
$P_1,\ldots,P_n,\tau^{-1}P_1,\ldots,\tau^{-1}P_n,\tau^{-2}P_1,\ldots$, we have the following result.

\begin{thm}\label{ORT-thm}\cite{ORT} 
The map $w\mapsto \overline{\C I_w}$ gives a bijection between the elements of $W$ and the cofinite (additive) quotient closed subcategories
of $\mod KQ$. 
Moreover, $\overline{\C I_w}$ is given by removing from $\mod KQ$ the indecomposable preprojective $KQ$-modules corresponding to the leftmost word $\underline{\ww}$. 
\end{thm}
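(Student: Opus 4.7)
The plan is to establish the explicit description first, and deduce the bijection from it. Throughout, I identify $\Lambda$ restricted to $KQ$ with $\bigoplus_{k\ge 0}\tau^{-k}(KQ)$, so that its indecomposable summands line up with the letters of $\underline{c}^\infty = s_1\cdots s_n\,s_1\cdots s_n\cdots$ via $P_1,\dots,P_n,\tau^{-}P_1,\dots$ With this identification, any submodule of $\Lambda_{KQ}$ singles out a subset of indecomposable preprojective summands, and $\C I_w$ is the additive closure of the summands that appear in $I_w$ when viewed over $KQ$.

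The heart of the argument is the following claim, to be proven by induction on $\ell(w)$: the preprojective indecomposables appearing (as direct summands, after possibly reassembling isomorphism classes) in $\C I_w$ are exactly those whose position in $\underline{c}^\infty$ does \emph{not} belong to $\underline{w}$. The base case $w = \id$ gives $I_w = \Lambda$, hence $\overline{\C I_w} = \mod KQ$, consistent with $\underline{w}$ being empty. For the inductive step, take $w' = w\,s_i$ with $\ell(w') = \ell(w)+1$, so that $I_{w'} = I_i \cdot I_w$. The idea is that the inclusion $I_i I_w \hookrightarrow I_w$, read over $KQ$, removes precisely one indecomposable preprojective summand, and that this summand corresponds exactly to the letter that is newly appended in passing from $\underline{w}$ to $\underline{w'}$ under the leftmost-word convention. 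One needs here (i) a local computation showing $M/I_iM$ is supported at vertex $i$ and that, when $M=I_w$, this quotient accounts for removing exactly one preprojective summand, and (ii) a combinatorial check verifying that the leftmost-word procedure inserts the symbol $s_i$ in the very first admissible slot of $\underline{c}^\infty$ — namely, the slot labelling that removed summand.

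Given the explicit description, the bijection follows formally. For injectivity, the leftmost word is uniquely determined by the set of letters it occupies in $\underline{c}^\infty$, and a reduced word in $\underline{c}^\infty$ determines $w$. For surjectivity, take a cofinite quotient-closed additive subcategory $\mathcal{Q} \subset \mod KQ$; the missing indecomposables form a finite set $F \subset \underline{c}^\infty$ of preprojectives, and I would show that any such $F$ arising from a quotient-closed subcategory is in fact the letter set of a valid leftmost reduced word. The quotient-closed condition translates to a combinatorial ``downward-closed'' property on $F$ that is equivalent to $F$ being obtainable as $\underline{w}$ for some $w$.

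The main obstacle is the inductive matching between the representation-theoretic operation of multiplying by $I_i$ and the combinatorial operation of adjoining $s_i$ to the leftmost word. More precisely, one must control which preprojective summand is eliminated by passing from $I_w$ to $I_iI_w$, and show that the ``leftmost'' rule places $s_i$ precisely at the unique position in $\underline{c}^\infty$ corresponding to that summand. A clean way to do this is to use the compatibility of the BGP reflection functors with the preprojective-algebra ideals $I_i$, so that passing between different admissible orientations of $Q$ transforms the problem into a base case where the summand being removed is literally a projective $P_i$; but executing this bookkeeping carefully, and ensuring that the leftmost-word combinatorics tracks the representation-theoretic choices at every step, is where the real work lies.
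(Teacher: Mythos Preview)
This theorem is not proved in the present paper: it is stated as a background result, cited from \cite{ORT}, and no argument is given here. So there is no ``paper's own proof'' to compare your proposal against.

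As for the proposal itself, your overall strategy is the natural one and is broadly in line with how \cite{ORT} proceeds: analyse how left multiplication by $I_i$ affects the $KQ$-restriction of $I_w$, and match this with the combinatorics of appending a letter to the leftmost word. However, what you have written is an outline rather than a proof. You explicitly flag the crux --- showing that $I_w \mapsto I_iI_w$ removes exactly one indecomposable preprojective summand over $KQ$, and that this summand sits at precisely the position in $\underline{c}^\infty$ dictated by the leftmost rule --- and then say this ``is where the real work lies'' without doing it. That step is genuinely nontrivial: one needs a careful analysis of $I_w/I_iI_w$ as a $KQ$-module (not just that it is supported at vertex $i$, but that restricted to $KQ$ it contributes a single indecomposable preprojective of the correct $\tau$-shift), and the surjectivity direction requires more than the vague ``downward-closed'' remark --- one must show that every cofinite quotient-closed subcategory actually arises, which in \cite{ORT} uses a separate argument. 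Until those two points are filled in, the proposal is a plausible plan but not a proof.
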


Here, a subcategory $\Aa$ in $\mod KQ$ is called \emph{cofinite} if 
there are only finitely many indecomposable $KQ$-modules which are not in $\Aa$. Note that any cofinite quotient closed subcategory contains all the non-preprojective $KQ$-modules \cite[Proposition 2.2]{ORT}.

\begin{exam}
Let $Q$ be the following quiver 
$$\xymatrix@C10pt@R5pt{&2\ar[rd]&\\
1\ar[ru]\ar[rr]&&3.}$$
Let $w=s_1s_3s_2s_3s_1$. 
Then the leftmost word $\underline{\ww}$ is $s_1s_2s_3s_2s_1$. 
Hence the corresponding indecomposable modules are $\{P_1,P_2,P_3,\tau^{-1}P_2,\tau^{-2}P_1\}$ and hence $\overline{\C I_w}$ 
consists of the additive hull of all indecomposable $KQ$ modules other than
these five. 
\end{exam}

Quotient closed subcategories which are also closed under extensions are
called torsion classes.  They are particularly important because of their
connection to tilting theory.
Therefore, it is very natural to ask when $\overline{\C I_w}$ is a torsion class.
The aim of this paper is to give an answer to this question, confirming  \cite[Conjecture 11.1]{ORT}.

\subsection{Support tilting modules}

Next we recall the notion of support tilting modules.

\begin{defi}\cite{IT}
  For a $KQ$-module $X$, we say that $X$ is \emph{tilting} if $\Ext^1(X,X)=0$
  and $X$ has $n$ pairwise non-isomorphic summands, where $n$ is the number
  of vertices of $Q$.  We call $X$ \emph{support tilting} if there exists an idempotent $e$ of $KQ$ such that $X$ is a tilting $(KQ/\langle e\rangle)$-module.
\end{defi}

Then we have the following result (see also \cite[Theorem 2.7]{AIR} for
a more general version of this result).

\begin{thm}\cite[Theorem 2.11]{IT}\label{bij support tilt}
Let $Q$ be an acyclic quiver. 
There is a bijection between the set $\stilt KQ$ of isomorphism classes of basic support tilting $KQ$-modules and the set $\fftors KQ$ of functorially finite torsion classes of $\mod KQ$. It is given by 
$\stilt KQ\ni T\mapsto\Fac T\in\fftors KQ$ and $\fftors KQ\ni\TT\mapsto P(\TT)\in\stilt KQ$, where $P(\TT)$ denotes the direct sum of one copy of each of the
indecomposable Ext-projectives of $\TT$ up to isomorphism.
\end{thm}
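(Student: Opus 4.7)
The plan is to establish the two maps are well-defined and mutually inverse, leveraging the classical Auslander--Smal\o\ theory of Ext-projective objects in functorially finite subcategories.

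First I would show that if $T$ is support tilting then $\Fac T$ is a functorially finite torsion class. Closure under quotients is built into the definition of $\Fac T$. For closure under extensions, the key preliminary step is to establish that $\Ext^1(T,\Fac T)=0$: starting from a surjection $T^n\twoheadrightarrow X$ with kernel $K$, the long exact sequence together with $\Ext^1(T,T)=0$ forces $\Ext^2(T,X)=0$ and then, by induction on projective dimension (which is at most $1$ over the hereditary algebra $KQ$, at least on the support), $\Ext^1(T,X)=0$. Given $0\to A\to B\to C\to 0$ with $A,C\in\Fac T$, pulling back a surjection $T^n\twoheadrightarrow C$ and using $\Ext^1(T^n,A)=0$ produces a lift $T^n\to B$ which, combined with the inclusion of $A\in\Fac T$, surjects onto $B$. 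Functorial finiteness follows because the minimal right $\add T$-approximation of any module serves as a right $\Fac T$-approximation, and the cokernel of an injective copresentation gives the left approximation.

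Next I would show that, conversely, for any functorially finite torsion class $\TT$, the module $P(\TT)$ is support tilting. The fact that $\TT$ has only finitely many indecomposable Ext-projectives (up to isomorphism) is a standard consequence of functorial finiteness: the right $\TT$-approximations of the indecomposable projectives $KQ$-modules collectively surject onto every Ext-projective, giving a finite list. The vanishing $\Ext^1(P(\TT),P(\TT))=0$ follows from Ext-projectivity together with $P(\TT)\in\TT$. Finally, counting the number of summands and showing it realizes the tilting condition modulo an idempotent uses a dimension argument: one compares $P(\TT)$ with the sum of indecomposable projectives supported on $\TT$, using that functorial finiteness provides enough approximations to control the simple factors.

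To show the maps are mutually inverse, for $\Fac P(\TT)=\TT$ the inclusion $\subseteq$ is trivial from $P(\TT)\in\TT$; the reverse uses functorial finiteness, since every $X\in\TT$ admits a right $\add P(\TT)$-approximation which must be surjective (otherwise its cokernel would yield a new Ext-projective direct summand not in $P(\TT)$). For $P(\Fac T)=T$, the direct summands of $T$ are Ext-projective in $\Fac T$ by the vanishing $\Ext^1(T,\Fac T)=0$ established above, and any other indecomposable Ext-projective would again have to be a summand of $T$ by approximation arguments combined with the correct count of summands.

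The main obstacle will be the cardinality/counting argument: verifying that $P(\TT)$ has exactly the right number of indecomposable summands to be support tilting (i.e., one per vertex in the relevant support). This requires a careful use of Auslander--Reiten theory to match Ext-projectives in $\TT$ with simples not annihilated by the defining idempotent, and it is precisely at this step that the hereditary hypothesis on $KQ$ simplifies things substantially compared with the general setting treated in \cite{AIR}.
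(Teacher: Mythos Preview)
The paper does not give its own proof of this theorem; it is quoted as a background result from \cite{IT} (with a pointer to \cite{AIR} for the general $\tau$-tilting version), so there is nothing in the paper to compare your proposal against.

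That said, your outline follows the standard route and is essentially sound. Two small technical corrections: in the hereditary setting $\Ext^2$ vanishes outright, so from $0\to K\to T^n\to X\to 0$ the long exact sequence gives $\Ext^1(T,X)=0$ directly from $\Ext^1(T,T^n)=0$---no induction on projective dimension is needed, and the phrase ``forces $\Ext^2(T,X)=0$'' is not the relevant conclusion. Also, ``the cokernel of an injective copresentation gives the left approximation'' is not the usual construction; covariant finiteness of $\Fac T$ is more cleanly obtained via Smal\o's result \cite{S} that a torsion class is functorially finite iff its torsion-free partner is, together with the identification of that partner as $\Sub$ of a cotilting module. You are right that the count of indecomposable Ext-projectives is the substantive step; in the hereditary case this is typically handled by showing their classes are linearly independent in the Grothendieck group and span exactly the sublattice corresponding to the support of $\TT$.
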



\subsection{Sortable elements and finite torsion free classes.}

In this subsection, we review an important connection between $c$-sortable elements and finite torsion free classes.

First we recall layers following \cite{AIRT}. 
For any reduced word $\ww=s_{u_1}\ldots s_{u_l}$, we have the chain of ideals $$\Lambda\supset I_{u_1}\supset I_{u_2}I_{u_1}\supset \ldots \supset I_{u_l}\ldots I_{u_2}I_{u_1}=I_\ww.$$  
For $j=1,\ldots,l$, we define the \emph{layer}  $$L^j_\ww=e_{u_j}L^j_\ww:=\frac{I_{u_{j-1}}\ldots I_{u_1}}{I_{u_j}\ldots I_{u_1}}.$$
Note that any layer $L^j_\ww$ is an indecomposable $\La$-module for any $j=1,\ldots,l$ \cite[Theorem 2.3]{AIRT}.

Then, for a $c$-sortable word, we can give a support tilting $KQ$-module and the associated torsion free class, which can be explicitly described by layers, as follows.

\begin{thm}\cite[Theorem 3.3, 3.11 and Corollary 3.10]{AIRT}\label{equality}
Let $\ww=c^{(0)}c^{(1)}\ldots c^{(m)}=s_{u_1}\ldots s_{u_l}$ be a $c$-sortable word. 
\begin{itemize}
\item[(a)] $L_\ww^j$ is a non-zero indecomposable $KQ$-module for all $j=1,\ldots,l$.
\end{itemize}

Moreover, we denote by $Q^{(0)}$ the quiver $Q$ restricted to the support of $c^{(0)}$. For $i\in Q_0^{(0)}$, we denote by $t_\ww(i)$ the maximal integer such that $u_{t_\ww(i)}=i$ and let 
$$T_\ww:=\bigoplus_{i\in Q^{(0)}_0}L^{t_\ww(i)}_\ww.$$
\begin{itemize}
\item[(b)]$T_{\ww}$ is a tilting $kQ^{(0)}$-module, that is, support tilting $KQ$-module.
\item[(c)] We have $\Sub T_\ww=\add\{L^1_\ww,\ldots, L^{l}_\ww\}=\C \La_w.$
\end{itemize}\end{thm}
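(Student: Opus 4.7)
The plan is to prove (a), (b), and (c) simultaneously by induction on $\ell(w)=l$, mirroring the recursive definition of $\pi^c$ given in Definition \ref{map pi}. The base case $l=1$ is immediate: $\ww=s_i$ gives $L^1_\ww=\La/I_i=S_i$, a simple $KQ$-module, $T_\ww=S_i$ is support tilting, and $\Sub T_\ww=\add S_i=\C\La_{s_i}$.

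For the inductive step, let $s=s_1$ be initial in $c$. If $s\notin\supp(w)$, then $\ww$ is a $c'$-sortable word for $c'=s_2\cdots s_n$ in the parabolic $W_{\langle s\rangle}$; the preprojective algebra of the sub-quiver on $\{2,\dots,n\}$ is a quotient of $\La$, so the layers and the torsion-free class are unchanged and induction on the rank applies directly. If instead $s\in\supp(w)$, then $c$-sortability forces the first letter of $\ww$ to be $s$, so $\ww=s\ww'$ and $I_w=I_{w'}I_s$, with $\ww'$ an $(scs)$-sortable expression after relabelling. I would then invoke the BGP / IR-reflection functor at $s$ to transport the inductively known data for $w'$ from the reflected quiver $s_1Q$ back to $Q$, adjoining the new layer $L^1_\ww=S_s$ at the top.

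The technical heart is part (a): since indecomposability of $L^j_\ww$ as a $\La$-module is already known from the discussion preceding the theorem, what remains to verify is that the dual arrows $a^*$ act as zero on each $L^j_\ww$, so that it is genuinely a $KQ$-module. This is exactly where the chain of supports $\supp(c^{(m)})\subset\cdots\subset\supp(c^{(0)})$ is essential — it rules out precisely the index configurations along the filtration $\La\supset I_{u_1}\supset I_{u_2}I_{u_1}\supset\cdots\supset I_\ww$ which could produce a non-zero $a^*$-action on a successive quotient; the hardest part of the whole argument will be making this ``ruling out'' explicit and preserved under the reflection at $s$. Granted (a), part (b) reduces to counting summands (exactly $|Q^{(0)}_0|$ by the definition of $t_\ww$) and to verifying $\Ext^1_{KQ}(T_\ww,T_\ww)=0$, which is again inductive since reflection functors at sinks/sources preserve rigidity of tilting modules.

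For part (c), each $L^j_\ww$ embeds into $L^{t_\ww(u_j)}_\ww$ through the canonical maps in the ideal filtration, which gives $\add\{L^1_\ww,\dots,L^l_\ww\}\subseteq\Sub T_\ww$; the reverse inclusion then follows from the fact that $T_\ww$ is the Ext-projective generator of $\Sub T_\ww$ and from Theorem \ref{bij support tilt}. Finally, $\Sub T_\ww=\C\La_w$ follows because $\La_w=\La/I_w$ admits a filtration whose successive quotients are the layers $L^j_\ww$, so $\La_w$ is itself a $KQ$-module by (a) and $\C\La_w$ is precisely the submodule-closure of these layers.
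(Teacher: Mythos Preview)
This theorem is not proved in the present paper; it is quoted from \cite{AIRT} (Theorems~3.3, 3.11 and Corollary~3.10 there), so there is no proof here to compare against. Your overall inductive scheme via the initial reflection $s$ and BGP reflection functors is in fact the strategy of \cite{AIRT}, so the outline is sound.

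That said, your argument for (c) contains a concrete error. You assert that ``each $L^j_\ww$ embeds into $L^{t_\ww(u_j)}_\ww$ through the canonical maps in the ideal filtration,'' but this is false: for $Q=(1\to 2)$ and $\ww=s_1s_2s_1$ one has $L^1_\ww=S_1$ while $L^{t_\ww(1)}_\ww=L^3_\ww=S_2$, and there is no embedding $S_1\hookrightarrow S_2$. The ideal filtration does not relate layers with the same vertex index in this way; the inclusion $\add\{L^j_\ww\}\subseteq\Sub T_\ww$ must instead be carried through the reflection-functor induction, where the new layer $S_s$ and the reflected layers are shown to lie in $\Sub T_\ww$ one step at a time.

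Your final sentence also does not work as written. Being filtered (as a $\La$-module) by the $KQ$-modules $L^j_\ww$ does not make $\La_w$ itself a $KQ$-module in the sense of being annihilated by $I_c$: extensions of $KQ$-modules in $\mod\La$ need not be $KQ$-modules (for instance, in type $A_2$ with $w=w_0$ one has $\La_w=\La$, which is certainly not annihilated by $I_c$). And even after restriction to $KQ$, a module filtered by $\{L^j_\ww\}$ is not automatically in $\add\{L^j_\ww\}$ --- think of $P_2$ filtered by $S_1$ and $S_2$. The equality $\C\La_w=\add\{L^j_\ww\}$ in \cite{AIRT} comes from an explicit analysis of how $(\La_w)_{KQ}$ decomposes, again driven by the inductive reflection argument rather than by a one-line filtration observation.
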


\begin{exam}\label{A_2 tilde}

Let $Q$ be the following quiver 
$$\xymatrix@C10pt@R5pt{&2\ar[rd]&\\
1\ar[ru]\ar[rr]&&3.}$$

Then $s_1s_2s_3$ is a Coxeter element of $Q$. 
Let $\ww=s_1s_2s_3s_1s_2s_1$. 
Then we have 
$$L_\ww^1=\begin{smallmatrix} 1\end{smallmatrix},\ 
L_\ww^2={\begin{smallmatrix}2\\ 1\end{smallmatrix}},\ 
L_\ww^3={\begin{smallmatrix}
 &3&&\\1&&2&\\&&&1\end{smallmatrix}},\ 
L_\ww^4={\begin{smallmatrix}
2&&3&&\\&1&&2&\\&&&&1\end{smallmatrix}},\ 
L_\ww^5={\begin{smallmatrix}
\\&3&&&&&\\1&&2&&3&&\\&&&1&&2&\\&&&&&&1\end{smallmatrix}},\ 
L_\ww^6={\begin{smallmatrix}
 3\\1\end{smallmatrix}}.
 $$
Hence we have $T_\ww={\begin{smallmatrix}
 &3&&\\1&&2&\\&&&1\end{smallmatrix}}\oplus{\begin{smallmatrix}
\\&3&&&&&\\1&&2&&3&&\\&&&1&&2&\\&&&&&&1\end{smallmatrix}}\oplus{\begin{smallmatrix}
 3\\1\end{smallmatrix}}$ and 
$$\Sub T_\ww=\add\{L^1_\ww,\ldots, L^{6}_\ww\}.$$
\end{exam}

We call a torsion free class \emph{finite} if it has finitely
many indecomposable modules. 
Theorem \ref{equality} implies that a $c$-sortable element gives a support tilting module and 
the finite torsion free class associated to it.  
Conversely, any finite torsion free classes of $\mod KQ$ is given by a support tilting module induced by a $c$-sortable element as follows.

\begin{thm}\cite[Theorem 3.16]{AIRT}\label{c implies tilt}
Let $\FF$ be a finite torsion free class. 
Then there exists a unique $c$-sortable word $\ww$ such that $T_\ww$ is a support tilting $KQ$-module and $\FF=\Sub T_\ww$.
\end{thm}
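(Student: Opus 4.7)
The plan is to derive the theorem by combining Theorems~\ref{intro-airt} and~\ref{equality}. For existence, given a finite torsion free class $\FF \subset \mod KQ$, Theorem~\ref{intro-airt} supplies a unique $c$-sortable element $w \in W$ with $\FF = \C\La_w$. Choosing the $c$-sortable reduced expression $\ww = c^{(0)} c^{(1)} \cdots c^{(m)}$ witnessing that $w$ is $c$-sortable, Theorem~\ref{equality}(b)(c) then says that $T_\ww$ is a support tilting $KQ$-module and that $\Sub T_\ww = \C\La_w = \FF$, as required.

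For uniqueness I would split the statement into two layers. First, if $\ww$ and $\ww'$ are $c$-sortable words with $\Sub T_\ww = \Sub T_{\ww'} = \FF$, then Theorem~\ref{equality}(c) yields $\C\La_w = \C\La_{w'} = \FF$, and the injectivity of the map in Theorem~\ref{intro-airt} forces $w = w'$ as elements of $W$. Second, at the level of reduced expressions, one invokes Reading's uniqueness result: each $c$-sortable element admits a unique reduced expression of the sortable form with descending supports. Combining these, $\ww = \ww'$.

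The real substance lies in Theorem~\ref{intro-airt}, whose nontrivial direction is surjectivity: every finite torsion free class arises from a $c$-sortable element. To establish this from scratch I would induct on the number of indecomposables in $\FF$, splitting on whether the simple $S_s$ corresponding to the initial letter $s$ of $c$ lies in $\FF$. If $S_s \notin \FF$, then because $\FF$ is closed under submodules, $\FF$ is contained in the right perpendicular category of $S_s$, which, since $s$ is a source of $Q$ and $S_s$ is simple projective, is equivalent to $\mod KQ'$ for $Q'$ obtained by deleting the vertex $s$; one then applies the induction with the Coxeter element $sc$ on $Q'$, mirroring the clause $\pi^c(w) = \pi^{sc}(w_{\langle s \rangle})$ of Definition~\ref{map pi}. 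If $S_s \in \FF$, one peels off $S_s$ via an appropriate mutation, applies the induction to the resulting smaller finite torsion free class, and prepends $s$ to the sortable word it produces, mirroring the clause $\pi^c(w) = s \pi^{scs}(sw)$. The hardest step is verifying in the second case that the peeling procedure genuinely produces a smaller finite torsion free class compatible with the Coxeter element $scs$, and then recovering the correct descending-support decomposition $c^{(0)} c^{(1)} \cdots c^{(m)}$ when reassembling the sortable expression from the inductive output.
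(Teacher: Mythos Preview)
The paper does not prove this theorem at all: it is quoted as a background result from \cite[Theorem 3.16]{AIRT}, with no argument supplied. So there is no ``paper's own proof'' to compare your proposal against.

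That said, there is a structural issue with your first derivation. In the paper's internal logic, Theorem~\ref{intro-airt} is not an independent input: its body-text incarnation is Corollary~\ref{sortable bij}, which the paper explicitly obtains by \emph{combining} Theorem~\ref{equality} and Theorem~\ref{c implies tilt}. Hence invoking Theorem~\ref{intro-airt} to deduce Theorem~\ref{c implies tilt} is circular within this paper's development. You seem to have anticipated this, since you go on to sketch an independent inductive proof of the surjectivity half of Theorem~\ref{intro-airt}; that sketch (case-splitting on whether the simple projective $S_s$ lies in $\FF$, passing to the subquiver $\overline Q$ in one case and applying a reflection functor to move to $\mu_s(Q)$ in the other) is indeed the strategy used in \cite{T} and \cite{AIRT}, and matches the recursive structure exhibited in Theorem~\ref{main1}(a) of the present paper. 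So the outline is sound, but as an argument for Theorem~\ref{c implies tilt} it only becomes non-circular once you actually carry out that induction rather than appealing to Theorem~\ref{intro-airt} as a black box.
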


Then, combining with Theorems \ref{equality} and \ref{c implies tilt}, we provide the following correspondence, which is also shown in \cite{T}. 

\begin{cor}\cite[Corollary 3.18]{AIRT}\label{sortable bij}
The map $w\mapsto \C \La_w$ gives a bijection 
$$\{ c \mbox{-sortable\ elements}\}\longleftrightarrow\{\mbox{finite\ torsion free\ classes\ of\ }\mod KQ\}.$$
\end{cor}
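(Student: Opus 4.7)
The plan is to derive this bijection as an immediate consequence of the two theorems of AIRT cited just above it, namely Theorem \ref{equality} and Theorem \ref{c implies tilt}. Our strategy is to verify well-definedness, then surjectivity, then injectivity of the map $w \mapsto \C\La_w$.

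First, I would check well-definedness: given a $c$-sortable element $w$, choose any $c$-sortable reduced word $\ww$ representing it. Theorem \ref{equality}(c) says $\C\La_w = \Sub T_\ww = \add\{L^1_\ww,\ldots,L^l_\ww\}$. Since there are only $l = \ell(w)$ layers, each of which is an indecomposable $KQ$-module by Theorem \ref{equality}(a), the subcategory $\C\La_w$ has only finitely many indecomposables; and $\Sub T_\ww$ is a torsion free class (as the $\Sub$ of any module). Thus $\C\La_w$ is a finite torsion free class. Note that this description also shows the output is independent of the chosen reduced word for $w$.

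Next, surjectivity follows directly from Theorem \ref{c implies tilt}: an arbitrary finite torsion free class $\FF$ is of the form $\Sub T_\ww$ for some $c$-sortable word $\ww$, and by Theorem \ref{equality}(c) again, this equals $\C\La_w$. Hence every finite torsion free class is hit by the map.

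Finally, for injectivity, suppose $w_1$ and $w_2$ are $c$-sortable with $\C\La_{w_1} = \C\La_{w_2}$. Choose $c$-sortable reduced expressions $\ww_1, \ww_2$ and observe that $\Sub T_{\ww_1} = \C\La_{w_1} = \C\La_{w_2} = \Sub T_{\ww_2}$ is a single finite torsion free class $\FF$. The uniqueness assertion in Theorem \ref{c implies tilt} then forces $\ww_1 = \ww_2$ as $c$-sortable words, hence $w_1 = w_2$ in $W$. The main (and only) delicate point is appealing correctly to this uniqueness statement; no genuinely new combinatorial or homological argument is required, since all the heavy lifting has already been done in Theorems \ref{equality} and \ref{c implies tilt}.
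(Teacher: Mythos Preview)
Your proof is correct and follows exactly the approach the paper indicates: the paper does not spell out a proof but simply states that the corollary follows by combining Theorems \ref{equality} and \ref{c implies tilt}, which is precisely what you do. Your explicit verification of well-definedness, surjectivity, and injectivity from those two results is a faithful unpacking of that one-line justification.
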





\subsection{Torsion pairs of preprojective algebras and path algebras.}\label{subsectionTorsion}
In this subsection, we introduce torsion pairs associated to the Coxeter group.

Let $\La$ be the preprojective algebra associated to $Q$.  
For $w\in W$, define the following subcategories 
\begin{align*}
\TT(I_w) :=\{X\in\mod\La\ |\ \Ext_\La^1(I_w,X)=0\},\ \ \ 
\FF(I_w) := \{X\in\mod\La\ |\ \Hom_\La(I_w,X)=0\}.
\end{align*}

Then, $(\TT(I_w),\FF(I_w))$ is a torsion pair in $\mod\La$.  
A simple explanation is, for example, given in \cite[Proposition 2.7]{SY} in the non-Dynkin case and 
 see \cite[section 5]{BKT} in the Dynkin case. 
Moreover, we recall the following result (we refer to \cite[Theorem 5.10]{BKT}). 

\begin{prop}\label{filt layer}
Let $\ww=s_{u_1}\cdots s_{u_l}$ be a reduced expression of $w$. 
Then $X\in\FF(I_w)$ if and only if $X$ has a filtration by the layers $L^1_\ww,\ldots,L^l_\ww$.
\end{prop}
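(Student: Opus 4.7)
The plan is to prove the biconditional by induction on the length $l = \ell(w)$. The base case $l = 0$ is vacuous: $I_w = \La$, so $\FF(I_w) = \{X : \Hom_\La(\La, X) = X = 0\}$, matching the empty filtration. For the inductive step I strip the last letter of $\ww$, writing $\ww = \ww' s$ with $s = s_{u_l}$, so that $I_w = I_s \cdot I_{w'}$, and the layers of $\ww$ split into those of $\ww'$ (namely $L^1_\ww, \ldots, L^{l-1}_\ww$) plus one new layer $L^l_\ww = I_{w'}/I_s I_{w'}$. The technical hub of the argument is the short exact sequence
\[
0 \longrightarrow I_w \longrightarrow I_{w'} \longrightarrow L^l_\ww \longrightarrow 0,
\]
whose $\Hom_\La(-,X)$ long exact sequence interrelates $\Hom_\La(I_w,X)$, $\Hom_\La(I_{w'},X)$, and $\Hom_\La(L^l_\ww,X)$.

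For the direction $(\Leftarrow)$ I would first check that each layer $L^j_\ww$ lies in $\FF(I_w)$. Writing $w_j := s_{u_1}\cdots s_{u_j}$, the identity $L^j_\ww \cdot I_w = 0$ reduces to the ideal inclusion $I_{w_{j-1}} \cdot I_w \subseteq I_{w_j}$, which follows from the two-sidedness of $I_{u_j}$ together with $I_w \subseteq I_{u_j}$: one has $I_{w_{j-1}} I_w \subseteq I_{w_{j-1}} I_{u_j} \subseteq I_{u_j} I_{w_{j-1}} = I_{w_j}$. Once $L^j_\ww$ is recognized as a $\La_w$-module, the long exact sequence coming from $0 \to I_w \to \La \to \La_w \to 0$ identifies $\Hom_\La(I_w, L^j_\ww)$ with $\Ext^1_\La(\La_w, L^j_\ww)$, which I would show to vanish inductively via the layer filtration of $\La_w$. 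Since $\FF(I_w)$ is closed under extensions as a torsion-free class, any module filtered by the layers lies in $\FF(I_w)$.

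For the direction $(\Rightarrow)$, given $X \in \FF(I_w)$, the six-term exact sequence above yields $\Hom_\La(L^l_\ww, X) \cong \Hom_\La(I_{w'}, X)$ (using $\Hom_\La(I_w, X) = 0$). The plan is to construct a short exact sequence $0 \to X_0 \to X \to \bar X \to 0$ where $\bar X$ is a direct sum of copies of $L^l_\ww$ (via the universal quotient of $X$ cogenerated by $L^l_\ww$, or dually a submodule defined as the trace of $L^l_\ww$ in $X$) and $X_0 \in \FF(I_{w'})$; the latter is verified using the above LES together with $\Hom_\La(I_w, X) = 0$. The inductive hypothesis then supplies a filtration of $X_0$ by $L^1_\ww, \ldots, L^{l-1}_\ww$, which combines with the trivial filtration of $\bar X$ to give the required filtration of $X$.

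The main obstacle is the reverse direction: one must identify the correct subfactor of $X$ that is a direct sum of copies of the indecomposable $L^l_\ww$ — not merely a semisimple module annihilated by $I_s$ — and then verify that its complement lies in $\FF(I_{w'})$. This is delicate because of the non-commutativity $I_s I_{w'} \ne I_{w'} I_s$ of ideal products and because $L^l_\ww$ is generally not simple, requiring careful Ext calculations that invoke the structure of each layer as an indecomposable $\La$-module.
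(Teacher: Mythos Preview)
The paper does not give its own proof of this proposition; it is quoted from \cite[Theorem 5.10]{BKT}, so there is no in-paper argument to compare against.

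Your plan has a concrete gap already in the $(\Leftarrow)$ direction, in addition to the one you flag for $(\Rightarrow)$. Granting the identification $\Hom_\La(I_w,L^j_\ww)\cong\Ext^1_\La(\La_w,L^j_\ww)$ for a $\La_w$-module $L^j_\ww$, you propose to show the latter vanishes ``inductively via the layer filtration of $\La_w$''. Read literally this requires $\Ext^1_\La(L^i_\ww,L^j_\ww)=0$ for all pairs $i,j$, and that is false: already for $Q$ of type $A_2$ with $\ww=s_1s_2s_1$ the layers are $S_1$, $P_2$, $S_2$, and both $\Ext^1_\La(S_1,S_2)$ and $\Ext^1_\La(S_2,S_1)$ are nonzero (witnessed by the projective covers $P_1$ and $P_2$). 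So the layer filtration of $\La_w$ does not by itself reduce the Ext-vanishing to anything simpler, and a different mechanism is needed even to place the individual layers inside $\FF(I_w)$.

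The standard route, used in \cite{BKT} and \cite{SY}, is not to strip the last letter and analyse the raw sequence $0\to I_w\to I_{w'}\to L^l_\ww\to 0$, but to strip the \emph{first} letter $s=s_{u_1}$ and use the reflection functors $-\otimes_\La I_s$ and $\Hom_\La(I_s,-)$ together with the equivalence $\TT(I_s)\simeq\YY(I_s)$ they induce. These functors carry $\FF(I_w)$ (modulo the single simple $S_s=L^1_\ww$) to $\FF(I_{sw})$ and send the layers $L^2_\ww,\ldots,L^l_\ww$ to $L^1_{s\ww},\ldots,L^{l-1}_{s\ww}$ (cf.\ Proposition~\ref{layer and reflection} in this paper). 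That equivalence is the missing ingredient that makes the induction run cleanly in both directions; without it, the ``split off copies of $L^l_\ww$'' step you identify as the main obstacle has no evident reason to succeed.
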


We also use the following result. 

\begin{thm}\cite[Corollary 3.9]{IR2}\label{subobj}
Let $w\in W$ and $\ww=s_{u_1}\ldots s_{u_l}$ be a reduced expression of $w$. 
The objects in $\Sub\La_w$ are exactly the objects in $\mod\La_w$ which have a
filtration by the layers $L^1_\ww,\ldots,L^l_\ww$.
\end{thm}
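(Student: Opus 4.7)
The strategy is to prove both inclusions of $\Sub\La_w = (\mod\La_w) \cap \FF(I_w)$, from which the theorem follows by Proposition \ref{filt layer}. For the easier inclusion $\Sub\La_w \subseteq (\mod\La_w) \cap \FF(I_w)$, observe that the ideal chain $\La \supset I_{u_1} \supset I_{u_2}I_{u_1} \supset \cdots \supset I_{u_l}\cdots I_{u_1} = I_w$, after passing to the quotient by $I_w$, descends to a filtration of $\La_w$ whose successive quotients are exactly the layers $L^1_\ww, \ldots, L^l_\ww$. By Proposition \ref{filt layer} this shows $\La_w \in \FF(I_w)$. Since $\FF(I_w)$ is a torsion-free class it is closed under submodules and direct sums, so $\Sub\La_w \subseteq \FF(I_w)$; and membership in $\mod\La_w$ is obvious for any submodule of a power of $\La_w$.

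The harder inclusion $(\mod\La_w) \cap \FF(I_w) \subseteq \Sub\La_w$ asserts that any $X \in \mod\La_w$ admitting a filtration by the layers embeds in a finite direct sum of copies of $\La_w$. I would argue by induction on the length of the filtration. The inductive step uses a short exact sequence $0 \to X' \to X \to L \to 0$ (with $L$ a layer) together with embeddings $X' \hookrightarrow \La_w^{N'}$ (by induction) and $L \hookrightarrow \La_w^{N''}$ (from the base case): these can always be assembled into an embedding $X \hookrightarrow \La_w^{N'+N''}$, either by extending $X' \hookrightarrow \La_w^{N'}$ over $X$ and adjoining the composite $X \twoheadrightarrow L \hookrightarrow \La_w^{N''}$, or, when the extension obstruction in $\Ext^1_\La(L, \La_w^{N'})$ is non-zero, by absorbing it into the second summand via a standard pullback construction.

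The main obstacle is the base case: realising each individual layer $L^j_\ww$ as a submodule of some $\La_w^N$. While $L^j_\ww$ appears as a subquotient of $\La_w$ by construction, exhibiting it as a bona fide submodule requires analysing the injective envelopes computed inside $\mod\La_w$ and showing that they lie in $\add\La_w$. This is where the deep structural input from \cite{IR2} enters: the algebra $\La_w$ is Iwanaga--Gorenstein and serves as an injective cogenerator in its own module category. Granted this property, the injective envelope in $\mod\La_w$ of any module (in particular of $L^j_\ww$) is a summand of a power of $\La_w$, yielding the required embedding and completing the argument.
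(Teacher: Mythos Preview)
The paper does not give its own proof of this statement; it is quoted from \cite[Corollary~3.9]{IR2}. So there is no in-paper argument to compare against, and I assess your proposal on its own.

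Your easy inclusion is correct. The hard inclusion, however, has a genuine gap. The claim in your final paragraph---that $\La_w$ ``serves as an injective cogenerator in its own module category'', so that every injective envelope in $\mod\La_w$ lies in $\add\La_w$---is simply false. For $w=c$ one has $\La_c\cong KQ$ (this is \cite[Lemma~2.11]{AIRT}, also used in the proof of Proposition~\ref{sw>w}), and the path algebra of a quiver with at least one arrow is never self-injective. What is true, from \cite{BIRS} and \cite{IR2}, is that $\La_w$ is Iwanaga--Gorenstein of injective dimension at most one, and that $\add\La_w$ is the class of projective-injective objects of the Frobenius \emph{exact} category $\Sub\La_w$; but the latter is a statement about relative injectivity \emph{inside} $\Sub\La_w$, so using it to identify $\Sub\La_w$ would be circular.

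Your inductive step is also incomplete independently of the base case. When the image of the embedding $X'\hookrightarrow\La_w^{N'}$ under the connecting map $\Hom_\La(X',\La_w^{N'})\to\Ext^1_\La(L,\La_w^{N'})$ is nonzero, there is no ``standard pullback construction'' that manufactures an embedding $X\hookrightarrow\La_w^{N'+N''}$ from the data you have: the pushout of $X'\to X$ along $X'\hookrightarrow\La_w^{N'}$ does embed $X$, but into an extension of $L$ by $\La_w^{N'}$ that you have no reason to believe lies in $\Sub\La_w$. What you actually need is the vanishing of $\Ext^1_{\La_w}(L,\La_w)$ (equivalently, relative injectivity of $\La_w$), which is the same missing ingredient as above.
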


Thus, as pointed out in \cite[Examples 5.6]{BKT}, we have 
$\Sub\La_w=\FF(I_w)$.

Assume that $Q$ is a non-Dynkin quiver. In this case, $I_w$ is a tilting module \cite[Theorem III.1.6]{BIRS} and 
hence we have $\TT(I_w)=\Fac I_w$ in $\mod\La$. 
Next assume that $Q$ is a Dynkin quiver.  
By \cite[Proposition 4.2]{M}, 
any torsion class (respectively, torsion free class) is given as $\Fac I_w$ (respectively, $\Sub\La_w$) for some $w\in W$.  Further, the torsion free class corresponding
to $\Fac  I_w$ is clearly $\FF(I_w)=\Sub\La_w$, implying the following proposition:

\begin{prop}\label{I_w torsion}
For any $w\in W$, $(\Fac I_w,\Sub\La_w)$ is a torsion pair of $\mod\La$.
\end{prop}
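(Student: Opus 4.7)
The plan is to assemble the ingredients that the subsection has already prepared, handling the non-Dynkin and Dynkin cases separately. The common input in both cases is that $\Sub \La_w = \FF(I_w)$, which was already observed from Proposition~\ref{filt layer} and Theorem~\ref{subobj} (both characterise membership by the existence of a filtration by the layers $L^1_\ww, \ldots, L^l_\ww$). So the whole task reduces to identifying $\TT(I_w)$ with $\Fac I_w$ as subcategories of $\mod \La$.

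In the non-Dynkin case, this is immediate from tilting theory. By \cite[Theorem III.1.6]{BIRS}, $I_w$ is a tilting $\La$-module. It is already noted in the subsection that $(\TT(I_w), \FF(I_w))$ is a torsion pair, and the torsion class of a tilting module $T$ is always $\Fac T$, so $\TT(I_w) = \Fac I_w$. Combined with $\FF(I_w) = \Sub\La_w$, this gives the torsion pair $(\Fac I_w, \Sub\La_w)$ directly.

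In the Dynkin case, $I_w$ need not be tilting, so we instead argue from the classification in \cite[Proposition 4.2]{M}: $\Fac I_w$ is a torsion class of $\mod\La$. The corresponding torsion-free class of the pair generated by $\Fac I_w$ is
\[
\{X \in \mod \La \mid \Hom_\La(\Fac I_w, X) = 0\} = \{X \in \mod \La \mid \Hom_\La(I_w, X) = 0\} = \FF(I_w),
\]
where the first equality holds because $\Fac I_w$ is generated as a subcategory by $I_w$ under factor modules (and $\Hom$ vanishing descends to factors). Using $\FF(I_w) = \Sub \La_w$ once more, we conclude that $(\Fac I_w, \Sub \La_w)$ is a torsion pair.

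The only slightly delicate point is uniform treatment across the two cases: the identification $\TT(I_w) = \Fac I_w$ has quite different origins, tilting theory in the non-Dynkin case versus the Mizuno classification in the Dynkin case. There is no single obstacle requiring substantial new work, since every building block is already stated in Subsection~\ref{subsectionTorsion}; the proof is essentially a matter of assembling them carefully and writing out the Hom-vanishing computation in the Dynkin case.
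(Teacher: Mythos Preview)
Your proposal is correct and follows essentially the same approach as the paper: both split into the non-Dynkin case (where $I_w$ is tilting, giving $\TT(I_w)=\Fac I_w$) and the Dynkin case (where \cite[Proposition 4.2]{M} is invoked to see that $\Fac I_w$ is a torsion class whose orthogonal is $\FF(I_w)=\Sub\La_w$). The only difference is that you spell out the Hom-vanishing computation in the Dynkin case, whereas the paper simply asserts it as ``clearly''.
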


We remark that this torsion pair is the same as the one used in \cite{L} although the convention used there is different from ours (see \cite[3.2.5]{L}).

As a corollary, we have the following result.

\begin{cor}\label{torsionpath}
Let $w\in W$. 
Then $(\Fac I_w\cap\mod KQ,\Sub\La_w\cap\mod KQ)$ is a torsion pair of $\mod KQ$.
\end{cor}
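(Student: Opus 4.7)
The plan is to deduce the corollary from Proposition \ref{I_w torsion} via the general principle that a torsion pair restricts to any sufficiently nice full subcategory. More precisely, I would show that $\mod KQ$ sits inside $\mod\La$ as a full abelian subcategory closed under submodules and quotient modules, and then observe that intersecting the torsion pair $(\Fac I_w,\Sub\La_w)$ with such a subcategory yields a torsion pair there.

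The first step is to verify the closure property. Via the inclusion of quivers $Q\subset Q^d$, a $KQ$-module is naturally the same as a $\La$-module on which every dual arrow $a^*$ acts by zero; in this way $\mod KQ$ is a full subcategory of $\mod\La$. If $M\in\mod KQ$ and $N$ is a $\La$-submodule of $M$, then $Na^*\subseteq Ma^*=0$, so $N\in\mod KQ$; the same computation shows any $\La$-quotient of $M$ lies in $\mod KQ$. Consequently $\mod KQ$ is also an abelian subcategory of $\mod\La$, with kernels and cokernels computed as in $\mod\La$.

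The second step is the general categorical fact. If $(\TT,\FF)$ is a torsion pair in an abelian category $\mathcal C$ and $\mathcal A\subseteq\mathcal C$ is a full abelian subcategory closed under subobjects and quotients, then $(\TT\cap\mathcal A,\FF\cap\mathcal A)$ is a torsion pair in $\mathcal A$. The Hom-vanishing is inherited from fullness, while for any $M\in\mathcal A$ the canonical exact sequence $0\to tM\to M\to fM\to 0$ in $\mathcal C$ has $tM\subseteq M$ in $\mathcal A$ and $fM=M/tM$ in $\mathcal A$, providing the required approximation sequence in $\mathcal A$. Applying this with $\mathcal C=\mod\La$, $\mathcal A=\mod KQ$, and $(\TT,\FF)=(\Fac I_w,\Sub\La_w)$ delivers the corollary.

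I do not expect any real obstacle: the statement is essentially a categorical triviality once the embedding $\mod KQ\hookrightarrow\mod\La$ is recognized as a full abelian subcategory closed under subs and quotients. The only point requiring a moment of care is checking the closure under submodules, and this reduces immediately to the computation $Na^*\subseteq Ma^*=0$ indicated above.
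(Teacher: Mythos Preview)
Your proof is correct and follows essentially the same approach as the paper: both rely on the key observation that $\mod KQ$ is a full subcategory of $\mod\La$ closed under submodules and factor modules, and then restrict the torsion pair of Proposition~\ref{I_w torsion}. The only cosmetic difference is that the paper verifies the Hom-orthogonality characterization $\TT={}^\perp\FF$, $\FF=\TT^\perp$ directly, whereas you verify the equivalent torsion-sequence axiom by noting that the canonical sequence $0\to tM\to M\to fM\to 0$ in $\mod\La$ already lives in $\mod KQ$.
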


\begin{proof}
  Clearly, we have $\Hom_{KQ}(X,Y)=0$ for any $X\in\Fac I_w\cap\mod KQ$ and $Y\in\Sub\La_w\cap\mod KQ$.
Next, take $X\in\mod KQ$ and assume that we have 
$\Hom_{KQ}(X,Y)=0$ for any $Y\in\Sub\La_w\cap\mod KQ$.
Then we get $\Hom_{\La}(X,Y')=0$ for any $Y'\in\Sub\La_w$ because
the image of a non-zero map from $X$ to $Y'$ would necessarily
  lie in $\Sub\La_w\cap \mod KQ$.
Therefore we get $X\in\Fac I_w$. 
By the dual argument, we get the conclusion.
\end{proof}


\subsection{Partial orders}

Finally, we recall some relationships of
partial orders between elements of Coxeter groups and torsion pairs. 

\begin{prop}\label{orders}
Let $w,v\in W$. 
The following conditions are equivalent.
\begin{itemize}
\item[(i)]$w\leq v$.
\item[(ii)]$\Fac I_w\supset \Fac I_v$ in $\mod\La$.
\item[(iii)]$\Sub\La_w\subset \Sub \La_v$ in $\mod\La$.
\end{itemize}
\end{prop}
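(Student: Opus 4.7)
For (b)$\Leftrightarrow$(c), I would invoke Proposition \ref{I_w torsion}: both $(\Fac I_w,\Sub\La_w)$ and $(\Fac I_v,\Sub\La_v)$ are torsion pairs in $\mod\La$. Since the torsion-free class of a torsion pair is determined by Hom-orthogonality with the torsion class (and vice versa), reverse inclusion on the torsion classes is equivalent to inclusion on the torsion-free classes.

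For (a)$\Rightarrow$(b), assume $w\le v$ and write $v=wu$ with $\ell(v)=\ell(w)+\ell(u)$. Theorem \ref{birs} then gives $I_v=I_uI_w$ as two-sided ideals. Choosing generators $y_1,\ldots,y_k$ for $I_u$ as a left ideal, the map $I_w^{\oplus k}\to I_v$, $(a_1,\ldots,a_k)\mapsto\sum_i y_i a_i$, is a surjection of right $\La$-modules, so $I_v\in\Fac I_w$ and hence $\Fac I_v\subset\Fac I_w$. An analogous direct proof of (a)$\Rightarrow$(c) uses Theorem \ref{subobj}: extend a reduced expression $\ww=s_{u_1}\cdots s_{u_l}$ of $w$ to a reduced expression $s_{u_1}\cdots s_{u_m}$ of $v$; the first $l$ layers of this longer expression coincide with the layers $L^j_\ww$, so any filtration witnessing $X\in\Sub\La_w$ is automatically a layer filtration for $v$, placing $X\in\Sub\La_v$.

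The main obstacle is the converse (b)$\Rightarrow$(a). Here the plan is to combine the injectivity of $w\mapsto I_w$ from Theorem \ref{birs} with the classical identification of the weak order on $W$ with reverse inclusion of ideals, namely $w\le v\iff I_v\subset I_w$; this is established in \cite{M} in the Dynkin case and in \cite{IR1,BIRS} in the non-Dynkin (tilting) setting. The task then reduces to translating the $\Fac$-containment $\Fac I_w\supset\Fac I_v$ into ideal containment $I_v\subset I_w$. A natural strategy is induction on $\ell(v)$: pick a left descent $s$ of $w$ (so $s\le w$), use the layer description of Theorem \ref{subobj} to show that $s\le v$ as well (arguing via the simple at the corresponding vertex or the first-layer structure of a reduced expression), and then strip off $s$ on both sides using the parabolic factorization $w=w_{\langle s\rangle}\cdot w^{\langle s\rangle}$, reducing the problem inside $W_{\langle s\rangle}$. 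The principal technical subtlety is verifying that the hypothesis $\Sub\La_w\subset\Sub\La_v$ descends compatibly to the stripped elements and the corresponding parabolic subcategory of $\mod\La$, so that the inductive hypothesis can be applied.
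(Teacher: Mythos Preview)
Your argument for (b)$\Leftrightarrow$(c) via the torsion pair is exactly what the paper does. Your explicit arguments for (a)$\Rightarrow$(b) and (a)$\Rightarrow$(c) are more than the paper gives (it simply cites \cite[II.1]{BIRS} for (a)$\Leftrightarrow$(c)), and they are essentially fine, with one slip: in the surjection $I_w^{\oplus k}\twoheadrightarrow I_v=I_uI_w$ you want generators of $I_u$ as a \emph{right} ideal, not a left ideal, so that $\sum_i y_i I_w = (\sum_i y_i\La)I_w = I_uI_w$; finite generation on the right is available because $I_u$ is tilting (non-Dynkin) or finite-dimensional (Dynkin).

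The real issue is your converse. The induction you sketch is muddled: stripping off a left descent $s$ takes $w$ to $sw$, not to something in $W_{\langle s\rangle}$, so the parabolic factorization $w=w_{\langle s\rangle}w^{\langle s\rangle}$ is the wrong tool and does not reduce the problem inside $W_{\langle s\rangle}$ as you claim. More importantly, the whole induction is unnecessary. Once you accept the equivalence $w\le v \iff I_v\subset I_w$ from \cite{BIRS,IR1,M} (which you already invoke), the translation from (c) to ideal containment is a one-liner: from $\Sub\La_w\subset\Sub\La_v$ you get $\La_w\in\Sub\La_v\subset\mod\La_v$, so $I_v$ annihilates $\La_w=\La/I_w$, i.e.\ $I_v\subset I_w$. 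The paper bypasses even this by citing \cite[II.1]{BIRS} directly for the equivalence of (a) and (c); your attempt to reprove it from scratch is where the proposal goes off the rails.
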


\begin{proof}
The equivalence of (b) and (c) follows from the fact that $(\Fac I_w,\Sub\La_w)$ is a torsion pair in $\mod\La$. 
The equivalence of (a) and (c) follows from \cite[II.1]{BIRS} (see also \cite{IR2}).
\end{proof}

Moreover, we have the following results, which are essentially the same as in \cite[Lemma 10.5]{ORT}, though we do not assume that $Q$ is Dynkin.

\begin{prop}\label{pathorders} 
Let $x,y\in W$ be $c$-sortable elements. 
Then the following conditions are equivalent.
\begin{itemize}
\item[(i)]$x\leq y$.
\item[(ii)]${}^{\perp}(\C \La_x)\supset {}^{\perp}(\C \La_y)$.
\item[(iii)]$\C \La_x\subset \C \La_y$.
\end{itemize}
\end{prop}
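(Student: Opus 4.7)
The plan is to prove (a) $\Leftrightarrow$ (c) and then deduce (b) $\Leftrightarrow$ (c) from torsion pair duality in $\mod KQ$. By Proposition \ref{orders}, $x \leq y$ is already equivalent to $\Sub\La_x \subset \Sub\La_y$ in $\mod\La$. Hence (a) $\Rightarrow$ (c) is immediate: intersect $\Sub\La_x \subset \Sub\La_y$ with $\mod KQ$ and use that $\C\La_w = \Sub\La_w \cap \mod KQ$.

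For the converse (c) $\Rightarrow$ (a), the strategy is to upgrade the $KQ$-level containment $\C\La_x \subset \C\La_y$ to the full containment $\Sub\La_x \subset \Sub\La_y$ in $\mod\La$, and then apply Proposition \ref{orders} in the other direction. Fix a $c$-sortable reduced expression $\mathbf{x} = s_{u_1}\cdots s_{u_l}$ for $x$. By Theorem \ref{equality}(a) and (c), each layer $L^j_\mathbf{x}$ is an indecomposable $KQ$-module lying in $\C\La_x$, and so, by hypothesis, in $\C\La_y \subset \Sub\La_y$. Because $\Sub\La_y = \FF(I_y)$ is a torsion free class in $\mod\La$ (Proposition \ref{I_w torsion}), it is closed under extensions. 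Any $M \in \Sub\La_x$ admits a filtration by the layers $L^1_\mathbf{x}, \ldots, L^l_\mathbf{x}$ by Theorem \ref{subobj}, so successive extension closure places $M$ in $\Sub\La_y$; hence $\Sub\La_x \subset \Sub\La_y$.

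For (b) $\Leftrightarrow$ (c): by Theorem \ref{equality}(b) together with Theorem \ref{bij support tilt}, $\C\La_x = \Sub T_\mathbf{x}$ is the torsion free part of a functorially finite torsion pair in $\mod KQ$ whose torsion part is $\Fac T_\mathbf{x} = {}^{\perp}(\C\La_x)$, and the analogous identification holds for $y$. The equivalence of (b) and (c) then follows from the standard order-reversing bijection between the two halves of a torsion pair in an abelian category. The main obstacle is the implication (c) $\Rightarrow$ (a): the hypothesis is purely at the level of $KQ$-modules, whereas Proposition \ref{orders} converts the conclusion $x \leq y$ to a containment of torsion free classes in the much larger category $\mod\La$. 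Theorem \ref{subobj} supplies exactly the bridge we need, since it shows that $\Sub\La_w$ is generated under extensions by its layers even though those layers live in $\mod KQ$; this is what allows the restricted $KQ$-level information to propagate back up to $\mod\La$.
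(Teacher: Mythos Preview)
Your argument for (c) $\Rightarrow$ (a) --- lifting the $KQ$-level inclusion to $\Sub\La_x \subset \Sub\La_y$ by pushing the layer filtration of Theorem~\ref{subobj} into the extension-closed class $\Sub\La_y$, and then invoking Proposition~\ref{orders} --- is correct and takes a genuinely different route from the paper. The paper instead handles (a) $\Leftrightarrow$ (c) entirely at the $KQ$ level: by \cite[Theorem 2.7]{AIRT} the dimension vectors of the layers of $w$ are exactly the inversions of $w$ (as positive real roots, independent of the reduced expression), and by Kac's theorem \cite{K} each real root supports a unique indecomposable $KQ$-module; combined with Theorem~\ref{equality}(c), the containment $\C\La_x\subset\C\La_y$ becomes the containment of inversion sets, which is the weak-order relation $x\le y$. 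Your method trades Kac's theorem for the ambient machinery of Proposition~\ref{orders} and Theorem~\ref{subobj}; the paper's argument is more self-contained at this stage of the exposition.

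One step in your (a) $\Rightarrow$ (c) needs attention: you invoke $\C\La_w=\Sub\La_w\cap\mod KQ$ for $c$-sortable $w$ without justification. At this point in the paper that equality is not yet available; it is the $c$-sortable case of Theorem~\ref{main1}(b), proved later. There is no circularity (Theorem~\ref{main1} does not use Proposition~\ref{pathorders}), but it is a forward reference. You can instead justify the inclusion you actually need, $\Sub\La_y\cap\mod KQ\subset\C\La_y$, with the same tools you already deploy for the other direction: any $X\in\Sub\La_y\cap\mod KQ$ has a filtration by the layers $L^k_{\yy}$ (Theorem~\ref{subobj}); since $\mod KQ$ is closed under $\La$-subquotients the filtration sits inside $\mod KQ$; and since $\C\La_y=\Sub T_{\yy}$ is a torsion free class in $\mod KQ$ containing each $L^k_{\yy}$, extension closure yields $X\in\C\La_y$.
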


\begin{proof}The equivalence of (b) and (c) follows from the fact that $({}^{\perp}(\C \La_x),\C \La_x)$ is a torsion pair in $\mod KQ$. 
Let $\xx=s_{u_1}\ldots s_{u_l}$ be a reduced expression of $x$.
Then, by \cite[Theorem 2.7]{AIRT}, the dimension vectors of the layers 
$L_\xx^j$ ($1\leq j\leq l$) are given by positive (real) roots, 
and the set of positive roots which appear does not depend on the choice of reduced expressions of $x$. 
On the other hand, there exists a unique indecomposable $KQ$-module which has the same dimension vector as $L_\xx^j$ by \cite{K}. Hence, by Theorem \ref{equality}, we have the equivalence of (a) and (c).
\end{proof}


\section{A representation-theoretic interpretation of the map $\pi^c$}
Let $Q$ be a finite connected acyclic quiver with vertices $Q_0=\{1,\ldots, n\}$. We assume  that $Q_0$ are admissibly numbered, that is, if we have an arrow $j\to i$, then $j<i$.  
(Hence 1 is a source of $Q$).
Let $\La$ be the preprojective algebra of $Q$. 

Let $Q':=\mu_1(Q)$ be the quiver obtained by reversing all arrows associated to the vertex 1. 
Then let $T:=\tau^- S_1 \oplus KQ/ S_1$ and denote the reflection functors by  
$R^+:=\Hom_{KQ}(T,-)$ and $R^-:=-\otimes_{KQ'}T$. 
These functors induce quasi-inverse equivalences 
$$\xymatrix@C10pt@R5pt{
\mod KQ/[S_1] \ar@<1mm>[rrr]^{R^+} &&& \mod KQ'/[S'_1],\ar@<1mm>[lll]^{R^-}}$$
where $\mod KQ/[S_1]$ (respectively, $\mod KQ'/[S'_1]$ ) 
is obtained from $\mod KQ$ (respectively, $\mod KQ'$) by annihilating morphisms factoring through the simple projective $KQ$-module $S_1$ (respectively, the simple injective $KQ'$-module $S'_1$).

We denote by $\overline{Q}$ 
the quiver given by removing the vertex 1 and the associated arrows.
Then, the first main result is given as follows (c.f. Definition \ref{map pi} and Theorem \ref{pai}).

\begin{thm}\label{main1}
\begin{itemize}
\item[(a)]Let $w\in W$ which is not the identity.
Then we have 
\[
\Sub\La_w\cap\mod KQ=\left\{\begin{array}{ll}
\add\{R^-(\Sub \La_{s_1w}\cap\mod KQ'), S_1\} &\mbox{if}\ \ell(s_1w)<\ell(w)\\
\Sub\La_{w_{\langle s_1\rangle}}\cap\mod K\overline{Q} &\mbox{if}\  \ell(s_1w)>\ell(w).
\end{array}\right.\]
\item[(b)]Let $w\in W$. Then we have 
$$\Sub\La_w\cap\mod KQ=\C \La_{\pi^c(w)}\ \ \mbox{and}\ \  \Fac I_w\cap\mod KQ=
{}^{\perp}(\C \La_{\pi^c(w)}).$$
\end{itemize}
\end{thm}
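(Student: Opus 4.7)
The plan is to prove (a) and (b) together by induction on $\ell(w)$, with the base case $w=\id$ immediate (both sides vanish since $\La_\id=0$ and $\pi^c(\id)=\id$). In the inductive step I would first establish (a) for each case of the dichotomy using the filtration description of $\Sub\La_w$ by layers from Theorem \ref{subobj}, and then deduce (b) by matching the recursion in (a) with the recursive definition of $\pi^c$ in Definition \ref{map pi}.

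For part (a), Case 1 ($\ell(s_1w)<\ell(w)$), I would pick a reduced expression $\ww=s_1s_{u_2}\cdots s_{u_l}$ beginning with $s_1$. By Theorem \ref{birs} we have $I_w=I_{u_l}\cdots I_{u_2}I_1$, and the first layer is $L^1_\ww=\La/I_1=S_1$, while the higher layers $L^j_\ww$ for $j\geq 2$ are governed by the word $s_{u_2}\cdots s_{u_l}$ viewed as a reduced expression for $s_1w$ in the Coxeter group of $Q'=\mu_1(Q)$. The identification is implemented by the BGP-type reflection functor at vertex $1$ on the preprojective algebras, which exchanges $\La_Q$- and $\La_{Q'}$-filtrations modulo $S_1$. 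Restricting to $\mod KQ$ on one side and $\mod KQ'$ on the other is then the precise content of the equivalence $\mod KQ/[S_1]\simeq\mod KQ'/[S'_1]$ mediated by $R^\pm$, so the extension closure of the $\ww$-layers intersected with $\mod KQ$ equals the image under $R^-$ of the corresponding subcategory in $\mod KQ'$, together with the missing module $S_1$ (which $R^-$ cannot produce).

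For part (a), Case 2 ($\ell(s_1w)>\ell(w)$), I would use the factorization $w=w_{\langle s_1\rangle}\cdot w^{\langle s_1\rangle}$ and a compatible reduced expression $\ww=\ww_1\ww_2$ where $\ww_2$ starts with $s_1$. The inclusion $\Sub\La_{w_{\langle s_1\rangle}}\cap\mod K\overline{Q}\subseteq\Sub\La_w\cap\mod KQ$ is immediate from Proposition \ref{orders} (since $w_{\langle s_1\rangle}\leq w$) together with $\mod K\overline Q\subseteq\mod KQ$. The reverse inclusion is the technical heart: since $s_1$ is not a left descent of $w$ we have $s_1\not\leq w$, hence $\La_{s_1}=S_1\notin\Sub\La_w$ by Proposition \ref{orders}. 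Given $X\in\Sub\La_w\cap\mod KQ$, the plan is to analyze its filtration by the layers $L^1_\ww,\ldots,L^l_\ww$ and show that no subfactor of type $L^j_\ww$ with $j$ coming from $\ww_2$ can appear: such layers necessarily have $S_1$ as a composition factor situated via $*$-arrows that do not vanish on any $KQ$-submodule of $\La_w$, which together with the absence of $S_1$ from $\Sub\La_w$ forces $X$ to be filtered entirely by $\ww_1$-layers. Hence $X\in\Sub\La_{w_{\langle s_1\rangle}}$ and $X$ has no composition factor at vertex $1$, so $X\in\mod K\overline{Q}$.

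With (a) in hand, part (b) reduces to checking that the recursion matches Definition \ref{map pi}. In Case 1, by induction $\Sub\La_{s_1w}\cap\mod KQ'=\C\La_{\pi^{s_1cs_1}(s_1w)}$ (noting that $s_1cs_1$ is the admissible Coxeter element for $Q'$); applying $R^-$ and appending $S_1$ produces $\C\La_{s_1\cdot\pi^{s_1cs_1}(s_1w)}=\C\La_{\pi^c(w)}$. In Case 2, $w_{\langle s_1\rangle}$ lies in $W_{\langle s_1\rangle}$ with Coxeter element $s_1c$, and by induction $\Sub\La_{w_{\langle s_1\rangle}}\cap\mod K\overline{Q}=\C\La_{\pi^{s_1c}(w_{\langle s_1\rangle})}=\C\La_{\pi^c(w)}$. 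The statement for $\Fac I_w\cap\mod KQ$ is then formal: by Corollary \ref{torsionpath}, $(\Fac I_w\cap\mod KQ,\Sub\La_w\cap\mod KQ)$ is a torsion pair in $\mod KQ$, while $({}^\perp(\C\La_{\pi^c(w)}),\C\La_{\pi^c(w)})$ is the torsion pair attached to the finite torsion free class $\C\La_{\pi^c(w)}$, and the torsion class is determined by the torsion free class. The main obstacle is the reverse inclusion in Case 2 of (a): controlling, layer-by-layer, which filtration quotients can appear in a $KQ$-module belonging to $\Sub\La_w$, which requires a careful identification of the $*$-arrow action on the layers of $\ww_2$ and the interplay with $S_1\notin\Sub\La_w$.
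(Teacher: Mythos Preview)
Your overall architecture matches the paper's: part (a) is established case by case via the layer description of $\Sub\La_w$ (the paper's Lemma \ref{ext closed}, i.e.\ $\Sub\La_w\cap\mod KQ=\mathsf{Ext}(\mathcal{L}^Q_\ww)$), and part (b) is then an induction on length and rank that matches the recursion in Definition \ref{map pi}. Case~1 of (a) is essentially the paper's Proposition \ref{sw<w}: the paper also uses $L^1_\ww=S_1$ and the identification $L^j_\ww\simeq L^{j-1}_{\ww'}\otimes_\La I_1\simeq R^-(L^{j-1}_{\ww'})$ from Proposition \ref{layer and reflection} to get $\mathcal{L}^Q_\ww=\{R^-(\mathcal{L}^{Q'}_{\ww'}),S_1\}$.

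The weak spot is your Case~2 argument. The sentence ``such layers necessarily have $S_1$ as a composition factor situated via $*$-arrows that do not vanish on any $KQ$-submodule of $\La_w$'' is not a usable statement as written. What you presumably want is: for $j>h$ the dimension vector of $L^j_\ww$ lies outside $\Phi_{\langle s_1\rangle}$, hence has positive $\alpha_1$-coefficient; if $L^j_\ww$ were a $KQ$-module then, since vertex~$1$ is a source, $S_1$ would embed in it, giving $S_1\in\Sub\La_w$ and contradicting $\ell(s_1w)>\ell(w)$ (this is exactly Lemma \ref{socle vanish}). That argument is valid, but it is not what you wrote, and it is also not how the paper proceeds. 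The paper (Proposition \ref{sw>w}) instead first proves directly that every $X\in\Sub\La_w\cap\mod KQ$ lies in $\mod K\overline{Q}$, using $X\cdot I_c=0$ together with $X\in\YY(I_1)$ from Lemma \ref{socle vanish}, and only then uses the dimension-vector argument to rule out $\ww_2$-layers as $K\overline{Q}$-modules. Both routes work; yours is arguably more direct once stated correctly, while the paper's has the advantage of isolating the conceptual fact $\Sub\La_w\cap\mod KQ\subset\mod K\overline{Q}$.

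One further gap in part (b), Case~1: the implication ``applying $R^-$ and appending $S_1$ produces $\C\La_{s_1\cdot\pi^{s_1cs_1}(s_1w)}$'' is not automatic. The paper invokes \cite[Theorem 3.8]{AIRT} for the identity $\add\{R^-(\C\La_y),S_1\}=\C\La_{s_1y}$ when $y$ is $s_1cs_1$-sortable; you should cite or prove this separately rather than treat it as a formality.
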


{Theorem \ref{intro1} is an immediate consequence.}
We give the proof of Theorem \ref{main1} after developing some lemmas. 

First we recall that the above subcategory $\add\{R^-(\Sub \La_{s_1w}\cap\mod KQ'), S_1\}$ is a torsion free class of $\mod KQ$. 

\begin{lemm}{\cite[Proposition 4.4]{T}}\label{add torsion-free}
Let $\FF'$ be a torsion free class of $\mod KQ'$. 
Then the subcategory  
$$\add \{R^-(\FF'),S_1\}$$ 
 is a torsion free class of $\mod KQ$.
\end{lemm}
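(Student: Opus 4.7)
The plan is to verify the two defining axioms of a torsion-free class for $\FF := \add\{R^-(\FF'), S_1\}$: closure under submodules and under extensions in $\mod KQ$. Throughout, I will use the standard facts about the APR reflection functors attached to $T$: the module $S_1$ is simple projective in $\mod KQ$ while $S'_1$ is simple injective in $\mod KQ'$; $R^+$ is left exact and $R^-$ is right exact; the pair $R^\pm$ restricts to mutually quasi-inverse equivalences between $\mod KQ/[S_1]$ and $\mod KQ'/[S'_1]$, so that $M \cong R^-R^+(M)$ whenever $M$ has no $S_1$-summand (and dually for $R^+R^-$); and by Brenner-Butler tilting, $R^-(N) \in \TT_T := \{M : \Ext^1_{KQ}(T,M) = 0\}$ and $\Hom_{KQ}(R^-(N), S_1) = 0$ whenever $N$ has no $S'_1$-summand.

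For closure under submodules, let $M = R^-(N) \oplus S_1^k \in \FF$ with $N \in \FF'$ having no $S'_1$-summand, and take $L \subseteq M$. Projectivity of $S_1$ splits off the image of $L$ in $S_1^k$, so $L \cong L_0 \oplus S_1^j$ with $L_0 \subseteq R^-(N)$. Left exactness of $R^+$ then gives $R^+(L_0) \hookrightarrow R^+R^-(N) = N$, whence $R^+(L_0) \in \FF'$ by submodule-closure of $\FF'$. Since $L_0$ has no $S_1$-summand, $L_0 \cong R^-R^+(L_0) \in R^-(\FF')$ and $L \in \FF$.

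For closure under extensions, take a short exact sequence $0 \to A \to B \to C \to 0$ with $A, C \in \FF$ and write $A = R^-(N_A) \oplus S_1^i$, $C = R^-(N_C) \oplus S_1^k$, where $N_A, N_C \in \FF'$ have no $S'_1$-summand. Projectivity of $S_1^k$ lets me split $B \cong B' \oplus S_1^k$, reducing to $C = R^-(N_C)$. The decomposition $A = R^-(N_A) \oplus S_1^i$ then yields a pullback decomposition $B \cong B_1 \times_C B_2 \subseteq B_1 \oplus B_2$, where $B_1$ is the pushout extension $0 \to R^-(N_A) \to B_1 \to C \to 0$ and $B_2$ is the pushout extension $0 \to S_1^i \to B_2 \to C \to 0$; by the already-established submodule-closure of $\FF$, it suffices to show $B_1, B_2 \in \FF$. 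For $B_2$, left exactness of $R^+$ together with $R^+(S_1) = 0$ gives $R^+(B_2) \hookrightarrow R^+R^-(N_C) = N_C$, so $R^+(B_2) \in \FF'$; splitting off the $S_1$-summands of $B_2$ then yields $B_2 \in \FF$. For $B_1$, the vanishing $\Ext^1_{KQ}(T, R^-(N_A)) = 0$ collapses the long exact sequence to $0 \to N_A \to R^+(B_1) \to N_C \to 0$, so $R^+(B_1) \in \FF'$ by extension-closure of $\FF'$; moreover $\Hom_{KQ}(R^-(N_A), S_1) = \Hom_{KQ}(R^-(N_C), S_1) = 0$ forbids any $S_1$-summand of $B_1$, whence $B_1 \cong R^-R^+(B_1) \in R^-(\FF')$, and $B \in \FF$ follows.

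The main obstacle will be controlling the $S_1$-summands in the extension step, since $S_1$ is projective but not injective in $\mod KQ$, so $S_1$-summands of $A$ cannot be split off $B$ directly. The pullback reduction above is what cleanly separates the \emph{BGP part}, where left exactness of $R^+$ combined with $R^-(\FF') \subseteq \TT_T$ produces an honest short exact sequence in $\mod KQ'$, from the \emph{$S_1$ part}, which reduces to submodule-closure of $\FF'$; once these are separated, the rest is a routine diagram chase.
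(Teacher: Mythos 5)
The paper does not actually prove this lemma: it is imported verbatim from \cite[Proposition 4.4]{T}, so there is no internal argument to compare yours against. Your proof is the natural one and its architecture is sound: every object of $\FF$ is of the form $R^-(N)\oplus S_1^{k}$ with $N\in\FF'$ free of $S_1'$-summands; projectivity of $S_1$ splits off the part of a submodule surjecting onto $S_1^{k}$; left exactness of $R^+=\Hom_{KQ}(T,-)$ handles submodule closure; and for extension closure the reduction $B\cong B_1\times_C B_2\subseteq B_1\oplus B_2$, combined with $\Ext^1_{KQ}(T,R^-(N_A))=0$ to get the short exact sequence $0\to N_A\to R^+(B_1)\to N_C\to 0$, is correct and is the right way to separate the reflected part from the $S_1$-part.

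There is one unjustified assertion, and it is false as stated: in the submodule step you claim that $L_0=L\cap R^-(N)$ has no $S_1$-summand. The class $\{M:\Hom_{KQ}(M,S_1)=0\}$ of modules without $S_1$-summand is the torsion class $\Gen T$, which is \emph{not} closed under submodules; for instance the socle inclusion of $S_1$ into its injective envelope $I(S_1)$ (an indecomposable module lying in $\Gen T$, hence in $R^-(\mod KQ')$) is a submodule having $S_1$ as a summand. So $L_0\cong R^-R^+(L_0)$ need not hold. The repair is one line and is exactly the move you already make for $B_2$: write $L_0\cong L_1\oplus S_1^{m}$ with $L_1$ having no $S_1$-summand; then $R^+(L_1)=R^+(L_0)\hookrightarrow N$ lies in $\FF'$, so $L_1\cong R^-R^+(L_1)\in R^-(\FF')$ and $L\cong L_1\oplus S_1^{j+m}\in\FF$. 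With that patch the argument is complete.
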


This implies the following conclusion.

\begin{prop}\label{torsion sub}
The subcategory $\add\{R^-(\Sub\La_{w}\cap\mod KQ'), S_1\}$ is  a torsion free class of $\mod KQ$.
\end{prop}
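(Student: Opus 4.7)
The proof will follow almost immediately by combining two earlier results, once we identify the correct setting. The plan is first to produce a torsion free class in $\mod KQ'$ of the form required by Lemma \ref{add torsion-free}, and then to feed it through that lemma to land inside $\mod KQ$.

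For the first step, I observe that the preprojective algebra $\Lambda$ and the Coxeter group $W$ depend only on the underlying unoriented graph of $Q$, so they coincide for $Q$ and $Q'=\mu_1(Q)$. In particular, for any $w\in W$, the $\Lambda$-module $\Lambda_w$ can just as well be viewed with its restriction to $KQ'$, and Corollary \ref{torsionpath}, applied with $Q$ replaced throughout by $Q'$, yields that
\[
(\Fac I_w\cap\mod KQ',\ \Sub\Lambda_w\cap\mod KQ')
\]
is a torsion pair of $\mod KQ'$. In particular, $\FF':=\Sub\Lambda_w\cap\mod KQ'$ is a torsion free class of $\mod KQ'$.

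For the second step, I simply apply Lemma \ref{add torsion-free} to this $\FF'$. The lemma produces a torsion free class of $\mod KQ$, namely
\[
\add\{R^-(\FF'),S_1\}=\add\{R^-(\Sub\Lambda_w\cap\mod KQ'),S_1\},
\]
which is exactly the subcategory in the statement. This finishes the proof.

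There is no real obstacle; the only point that deserves a moment's attention is the implicit identification of the Coxeter groups and preprojective algebras of $Q$ and $Q'$, which justifies invoking Corollary \ref{torsionpath} on the $Q'$-side for the element $w\in W$ originally attached to $Q$. Once that is acknowledged, the statement is a direct composition of Corollary \ref{torsionpath} and Lemma \ref{add torsion-free}.
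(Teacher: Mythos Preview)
Your proof is correct and follows essentially the same two-step approach as the paper: first invoke Corollary~\ref{torsionpath} (for $Q'$) to see that $\Sub\Lambda_w\cap\mod KQ'$ is a torsion free class, then apply Lemma~\ref{add torsion-free}. Your additional remark that $\Lambda$ and $W$ depend only on the underlying graph is a welcome clarification that the paper leaves implicit.
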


\begin{proof} 
$\Sub\La_{w}\cap\mod KQ'$ is a torsion free class by Corollary \ref{torsionpath}. 
Thus, Lemma \ref{add torsion-free} shows the assertion.
\end{proof}

Next we give the following observation.

\begin{lemm}\label{ext closed}
Let $w\in W$ and $\ww=s_{u_1}s_{u_2}\cdots s_{u_l}$ be a reduced expression for $w$.
Let $\mathcal{L}^{Q}_\ww:=\{L_\ww^j\ (1\leq j\leq l)|\ L_\ww^j : KQ\mbox{-module} \}$. 
Then we have
$$\Sub\La_w\cap\mod KQ=\mathsf{Ext}_{}(\mathcal{L}^{Q}_\ww),$$
where the right hand side denotes the smallest extension closed subcategory of $\mod KQ$ containing $\mathcal{L}^{Q}_\ww$.
\end{lemm}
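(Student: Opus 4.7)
The proof naturally splits into two containments. My plan is to obtain the easier direction via the closure properties of $\Sub \La_w$, and the harder direction by pulling back a $\La$-filtration to $\mod KQ$ using a simple but crucial observation about $KQ$-submodules.

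For the inclusion $\mathsf{Ext}(\mathcal{L}^Q_\ww) \subseteq \Sub\La_w \cap \mod KQ$, I would verify that the right-hand side is extension-closed inside $\mod KQ$ and contains the generating set $\mathcal{L}^Q_\ww$. The containment of generators is immediate from Theorem \ref{subobj}, since each layer is trivially filtered by itself. Extension closure follows from Proposition \ref{I_w torsion}: since $\Sub\La_w = \FF(I_w)$ is a torsion free class of $\mod\La$, it is extension closed in $\mod\La$. Any short exact sequence $0 \to A \to X \to B \to 0$ in $\mod KQ$ is also exact in $\mod\La$ via the canonical surjection $\La \twoheadrightarrow KQ$, so $X \in \Sub\La_w$, and $X \in \mod KQ$ is automatic.

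For the reverse inclusion $\Sub\La_w \cap \mod KQ \subseteq \mathsf{Ext}(\mathcal{L}^Q_\ww)$, take $X \in \Sub\La_w \cap \mod KQ$. By Theorem \ref{subobj}, choose a $\La$-filtration
$$0 = X_0 \subset X_1 \subset \cdots \subset X_m = X$$
with successive quotients isomorphic to layers $L^{j_i}_\ww$. The key observation is that each $X_i$ is in fact a $KQ$-module: since the opposite arrows $a^*$ of $Q^d$ annihilate $X$, they annihilate any $\La$-submodule of $X$ a fortiori. Consequently each subquotient $X_i/X_{i-1}$ is also a $KQ$-module, so the layer appearing at step $i$ lies in $\mathcal{L}^Q_\ww$. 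The same filtration, viewed in $\mod KQ$, then exhibits $X$ as an iterated $KQ$-extension of elements of $\mathcal{L}^Q_\ww$, so $X \in \mathsf{Ext}(\mathcal{L}^Q_\ww)$.

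The main content of the argument is really just the last observation, namely that $\La$-submodules of a $KQ$-module are automatically $KQ$-modules; this is the step that lets us transport the $\La$-theoretic filtrations supplied by Theorem \ref{subobj} into $\mod KQ$. Everything else is formal bookkeeping using the torsion-free class property of $\Sub\La_w$ and the exactness of the inclusion $\mod KQ \hookrightarrow \mod\La$.
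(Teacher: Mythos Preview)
Your proposal is correct and follows essentially the same approach as the paper's proof: both directions use Theorem~\ref{subobj}, with the easy inclusion coming from extension closure of $\Sub\La_w\cap\mod KQ$ and the reverse inclusion from the observation that a layer filtration of a $KQ$-module has all subquotients in $\mod KQ$. Your version is slightly more explicit about why $\La$-submodules of $KQ$-modules are again $KQ$-modules, which the paper leaves to the reader; note that in the paper's conventions $\mod KQ$ sits inside $\mod\La$ via $\La_c\simeq KQ$ (see the proof of Proposition~\ref{sw>w}), so the literal justification is that any $\La$-submodule of a module annihilated by $I_c$ is again annihilated by $I_c$.
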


\begin{proof}
By Theorem \ref{subobj}, we have $\mathcal{L}^{Q}_\ww\subset\Sub\La_w\cap\mod KQ$. Since the category is extention closed, we have $\mathsf{Ext}_{}(\mathcal{L}^{Q}_\ww)\subset \Sub\La_w\cap\mod KQ$. 
We will show the converse.
Let $X\in\Sub\La_w\cap\mod KQ$. 
Then $X$ has a filtration by the layers $L_\ww^j\ (1\leq j\leq l)$ from Theorem \ref{subobj}. 
Moreover, since $X$ is a $KQ$-module, these layers consist of $KQ$-modules. 
Therefore we get $X\in\mathsf{Ext}_{}(\mathcal{L}^{Q}_\ww)$.
\end{proof}



Following the notation of \cite{SY}, we let 
\begin{eqnarray*}
\YY(I_1)&:=&\{X\in\mod\La\ |\Tor_1^\La(X,I_1)=0\}\\
&=&\{X\in \mod\La\ |S_1\mbox{ is not a direct summand of the socle of } X\}.
\end{eqnarray*}

(We refer to \cite[Lemma 2.23]{SY} and \cite[Section 5]{BKT} for the above equality.) 
Then we have the following lemma.

\begin{lemm}\label{socle vanish}
Let $w\in W$ and assume that $\ell(s_1w)>\ell(w)$.
Then we have 
$$\Sub\La_w\subset \YY(I_1).$$ 
\end{lemm}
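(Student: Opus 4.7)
The plan is to reduce the desired inclusion $\Sub\La_w \subset \YY(I_1)$ to a single non-membership statement: namely, that the simple module $S_1$ itself does not lie in $\Sub\La_w$. This reduction works because $\YY(I_1)$ consists of those $X$ for which $S_1$ is not a summand of $\mathrm{soc}(X)$, equivalently for which there is no embedding $S_1 \hookrightarrow X$ (the socle being semisimple). Since $\Sub\La_w$ is by its very definition closed under subobjects, the existence of an embedding $S_1 \hookrightarrow X$ for some $X \in \Sub\La_w$ would force $S_1 \in \Sub\La_w$; so $S_1 \notin \Sub\La_w$ suffices.

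First I would identify $\La_{s_1}$ explicitly. Since $I_1 = \La(1-e_1)\La$, every idempotent $e_j$ with $j \neq 1$ equals $e_j(1-e_1)$ and hence lies in $I_1$. Moreover, because $Q$ is acyclic (in particular, has no loops), every arrow in the double quiver $Q^d$ has at least one endpoint different from $1$, so every arrow, and therefore every path of positive length, lies in $I_1$. Hence $\La/I_1$ is one-dimensional, spanned by $\overline{e_1}$, giving $\La_{s_1} = \La/I_1 \cong S_1$ and $\Sub\La_{s_1} = \add S_1$.

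Next I would invoke Proposition \ref{orders}, applied with the first element $s_1$ and the second element $w$: the equivalence (a)$\Leftrightarrow$(c) reads
$$s_1 \leq w \ \Longleftrightarrow\ \Sub\La_{s_1} \subset \Sub\La_w \ \Longleftrightarrow\ S_1 \in \Sub\La_w.$$
The hypothesis $\ell(s_1 w) > \ell(w)$ is exactly the statement that $s_1$ is not a left descent of $w$, i.e.\ $s_1 \not\leq w$ in the right weak order. Therefore $S_1 \notin \Sub\La_w$, and the opening paragraph delivers $\Sub\La_w \subset \YY(I_1)$.

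The only mildly technical step is the computation $\La/I_1 \cong S_1$, and this hinges solely on the absence of loops at vertex $1$, which is built into the acyclicity of $Q$. Everything else is formal: the characterization of $\YY(I_1)$ as the socle-avoidance class, closure of $\Sub\La_w$ under subobjects, and the order dictionary of Proposition \ref{orders}. Thus I do not anticipate any serious obstacle.
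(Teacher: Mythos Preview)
Your proof is correct, and it takes a genuinely different route from the paper's.

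Both arguments share the same opening reduction: since $\YY(I_1)$ consists of those $X$ with no copy of $S_1$ in the socle, and $\Sub\La_w$ is closed under subobjects, it suffices to show $S_1 \notin \Sub\La_w$. The divergence is in how this non-membership is established. The paper invokes the layer filtration (Theorem~\ref{subobj}): if $S_1 \in \Sub\La_w$ then, being simple, $S_1$ must itself be one of the layers $L_\ww^j$; but the dimension vectors of the layers are the positive roots $s_{u_1}\cdots s_{u_{j-1}}\alpha_{u_j}$, and $\alpha_1$ cannot appear among these when no reduced expression for $w$ begins with $s_1$. You instead compute $\La_{s_1}\cong S_1$ directly and then apply the order dictionary of Proposition~\ref{orders}: $S_1 \in \Sub\La_w \Leftrightarrow \Sub\La_{s_1}\subset\Sub\La_w \Leftrightarrow s_1\leq w$, which is precisely the negation of the hypothesis $\ell(s_1w)>\ell(w)$.

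Your approach is more self-contained and avoids both the layer filtration theorem and the root-system computation; it buys a shorter, cleaner argument. The paper's approach has the virtue of staying within the layer machinery that the surrounding lemmas (e.g.\ Lemma~\ref{ext closed}, Proposition~\ref{layer and reflection}) are already using, so it is thematically consistent with the section even if slightly heavier for this particular lemma.
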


\begin{proof}
Assume that $\Sub\La_w\not\subset\YY(I_1).$
Then $S_1$ is a direct summand of the socle of some module $X\in\Sub\La_w$.
In particular $S_1\in\Sub\La_w$. 

On the other hand, Theorem \ref{subobj} implies that any element in $\Sub\La_w$ is given by a filtration of the layers and hence $S_1$ is one of the layers. 
Moreover, by \cite[Theorem 2.7]{AIRT}, the dimension vectors of the layers are the positive roots  
$s_{u_1}s_{u_2}\cdots s_{u_{j-1}}\alpha_{u_j}$ ($1\leq j\leq l$), where 
$\ww=s_{u_1}s_{u_2}\cdots s_{u_l}$ is a reduced expression for $w$.
If $s_{u_1}s_{u_2}\cdots s_{u_{j-1}}\alpha_{u_j}=\alpha_1$, then
  $s_1s_{u_1}s_{u_2}\cdots s_{u_{j-1}}s_{u_j}=s_{u_1}s_{u_2}\cdots s_{u_{j-1}}$.
Since  $\ell(s_1w)>\ell(w)$, this is impossible.
\end{proof}

Moreover we recall the following results from \cite{AIRT} (see also \cite[Proposition 7.1]{BK}).

\begin{prop}\label{layer and reflection}
Let $\ww=s_1s_{u_2}\cdots s_{u_l}$ be a reduced expression and 
$\ww'=s_{u_2}\cdots s_{u_l}$. 
Then, for $2\leq j\leq l$, we have 
$$L^{j}_{\ww}\simeq L^{j-1}_{\ww'}\otimes_\La I_1.$$

Moreover,\begin{itemize}
\item[(a)]if $L^{j-1}_{\ww'}$ is a $KQ'$-module, 
then we have $L^{j}_{\ww}\simeq L^{j-1}_{\ww'}\otimes_\La I_1\simeq L^{j-1}_{\ww'}\otimes_{KQ'} T.$
In particular, $L^{j}_{\ww}$ is $KQ$-module.
\item[(b)]if $L^{j}_{\ww}$ is $KQ$-module, then we have 
$L^{j-1}_{\ww'}\simeq\Hom_\La(I_1,L^{j}_{\ww})\simeq\Hom_{KQ}(T,L^{j}_{\ww})$.
In particular, $L^{j-1}_{\ww'}$ is a $KQ'$-module. 
\end{itemize}
\end{prop}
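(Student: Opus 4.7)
The plan is to unwind the definition of layers, use the good behaviour of multiplication of the $I_i$'s along reduced expressions to obtain $L^j_\ww \simeq L^{j-1}_{\ww'}\otimes_\La I_1$, and finally translate $(-)\otimes_\La I_1$ into the reflection functor by means of the bimodule structure of $I_1$.

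First, straight from the definition of layers,
$$L^j_\ww = \frac{I_{u_{j-1}}\cdots I_{u_2}I_1}{I_{u_j}\cdots I_{u_2}I_1}, \qquad L^{j-1}_{\ww'} = \frac{I_{u_{j-1}}\cdots I_{u_2}}{I_{u_j}\cdots I_{u_2}}.$$
The right-multiplication map on $\La$ gives a surjection $L^{j-1}_{\ww'}\otimes_\La I_1\twoheadrightarrow L^j_\ww$. To upgrade this to an isomorphism I would invoke the standard fact underlying Theorem~\ref{birs} that for any reduced expression $s_{u_1}\cdots s_{u_k}$ the canonical map $I_{u_k}\otimes_\La\cdots\otimes_\La I_{u_1}\to I_{u_k}\cdots I_{u_1}$ is an isomorphism; since $s_1s_{u_2}\cdots s_{u_l}$ is reduced, this applies to both numerator and denominator, producing the desired tensor description.

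For part (a), the key point is that $-\otimes_\La I_1$ restricted to those $\La$-modules that are genuinely $KQ'$-modules agrees with the APR reflection $-\otimes_{KQ'}T$. Viewing $KQ'\hookrightarrow\La$ via the natural embedding (the arrows of $Q'$ at vertex $1$ being the starred arrows of $Q^d$), one checks from the preprojective relations around vertex $1$ that there is a $(KQ',KQ)$-bimodule isomorphism $KQ'\otimes_\La I_1\simeq T$. Hence whenever $L^{j-1}_{\ww'}$ is a $KQ'$-module,
$$L^{j-1}_{\ww'}\otimes_\La I_1 \;\simeq\; L^{j-1}_{\ww'}\otimes_{KQ'}(KQ'\otimes_\La I_1) \;\simeq\; L^{j-1}_{\ww'}\otimes_{KQ'}T,$$
and the latter is a $KQ$-module since $T$ is a right $KQ$-module.

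For part (b), the assumption that $L^j_\ww$ is a $KQ$-module places it in the subcategory on which $R^+=\Hom_{KQ}(T,-)$ and $R^-=-\otimes_{KQ'}T$ are quasi-inverse equivalences (modulo $[S_1]$; that $S_1$ is not a summand follows from Lemma~\ref{socle vanish} applied to the subword starting with $s_{u_2}$). Applying the tensor-Hom adjunction for the bimodule $T$ to $L^j_\ww\simeq L^{j-1}_{\ww'}\otimes_\La I_1$ yields $\Hom_\La(I_1,L^j_\ww)\simeq L^{j-1}_{\ww'}$, and the bimodule identification of (a) rewrites this as $\Hom_{KQ}(T,L^j_\ww)=R^+(L^j_\ww)$, which is a $KQ'$-module by construction. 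The main obstacle I anticipate is pinning down the bimodule isomorphism $KQ'\otimes_\La I_1\simeq T$: it demands specifying the embedding $KQ'\hookrightarrow\La$ precisely and matching it against the explicit generators of $I_1$ inside the double quiver algebra. Once this identification is established, the proposition reduces to standard tensor-Hom adjunction combined with the classical theory of APR-tilting.
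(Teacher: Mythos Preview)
Your treatment of the first isomorphism $L^j_\ww\simeq L^{j-1}_{\ww'}\otimes_\La I_1$ and of part (a) is essentially what the paper does: the paper simply cites \cite[Proposition~2.2 and Corollary~2.12]{AIRT} for precisely these statements, whereas you sketch how one might reprove them via a bimodule identification. That is fine, though note that in writing ``$KQ'\otimes_\La I_1$'' you must mean $KQ'$ as the right $\La$-module $\La/I_{c'}$ (with $c'=s_1cs_1$), not as the subalgebra; this should be made explicit.

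Part (b), however, has a genuine gap. The sentence ``Applying the tensor-Hom adjunction for the bimodule $T$ to $L^j_\ww\simeq L^{j-1}_{\ww'}\otimes_\La I_1$ yields $\Hom_\La(I_1,L^j_\ww)\simeq L^{j-1}_{\ww'}$'' is not correct as stated: tensor--Hom adjunction gives a natural map (the unit) $L^{j-1}_{\ww'}\to \Hom_\La(I_1,L^{j-1}_{\ww'}\otimes_\La I_1)$, but \emph{not} an isomorphism in general. What is needed is that this unit is an isomorphism, and that holds precisely when $L^{j-1}_{\ww'}\in\YY(I_1)$, i.e.\ when the pair $(-\otimes_\La I_1,\Hom_\La(I_1,-))$ restricts to an equivalence $\YY(I_1)\simeq\TT(I_1)$. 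This is exactly how the paper argues: it invokes that equivalence (citing \cite{SY,BKT}) and checks implicitly that $L^{j-1}_{\ww'}\in\YY(I_1)$. Your invocation of Lemma~\ref{socle vanish} ``applied to the subword starting with $s_{u_2}$'' is the right ingredient for this --- it gives $\Sub\La_{w'}\subset\YY(I_1)$ and hence $L^{j-1}_{\ww'}\in\YY(I_1)$ --- but you use it for the wrong purpose (to rule out $S_1$ as a summand of $L^j_\ww$, which is a separate and easier fact, since $j\geq2$ forces $L^j_\ww\not\simeq S_1$ by comparing dimension vectors). Moreover, your argument as written tries to run the $KQ$-level reflection equivalence $\mod KQ/[S_1]\simeq\mod KQ'/[S_1']$ to recover $L^{j-1}_{\ww'}$, but that equivalence only helps \emph{after} you already know $L^{j-1}_{\ww'}$ is a $KQ'$-module, which is what you are trying to prove. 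The correct order is: first use the $\La$-level equivalence $\YY(I_1)\leftrightarrow\TT(I_1)$ to get $\Hom_\La(I_1,L^j_\ww)\simeq L^{j-1}_{\ww'}$, then use the dual of the bimodule identification from (a) to rewrite $\Hom_\La(I_1,L^j_\ww)\simeq\Hom_{KQ}(T,L^j_\ww)$, and conclude that $L^{j-1}_{\ww'}$ is a $KQ'$-module.
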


\begin{proof}
The first statement follows from \cite[Proposition 2.2]{AIRT}. 

(a) Assume that  $L^{j-1}_{\ww'}$ is a $KQ'$-module. Then \cite[Corollary 2.12]{AIRT} implies that 
$L^{j-1}_{\ww'}\otimes_\La I_1\simeq L^{j-1}_{\ww'}\otimes_{KQ'} T$ and hence it follows from the first statement.

(b) Dually, we obtain $\Hom_\La(I_1,L^{j}_{\ww})\simeq\Hom_{KQ}(T,L^{j}_{\ww})$, which is a $KQ'$-module. 
On the other hand, because we have an equivalence (for example \cite[Lemma 2.16]{SY} and \cite[Section 5]{BKT})
$$\xymatrix@C10pt@R5pt{\TT(I_1) \ar@<1mm>[rrrr]^{\Hom_\La(I_1,-)} &&&& \YY(I_1)\ar@<1mm>[llll]^{-\otimes_\La I_1}},$$ 
we get 
$\Hom_{\La}(I_1,L^{j}_{\ww}) \simeq\Hom_{\La}(I_1,L^{j-1}_{\ww'}\otimes_\La I_1)\simeq L^{j-1}_{\ww'}.$  
This shows the assertion.
\end{proof}

Then we give the following proposition, which implies the first equation of Theorem \ref{main1} of (a).

\begin{prop}\label{sw<w}
If $\ell(s_1w)<\ell(w)$, then we have 
$$\Sub\La_w\cap\mod KQ= \add\{R^-(\Sub\La_{s_1w}\cap\mod KQ'), S_1\} .$$
\end{prop}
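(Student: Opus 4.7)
The plan is to combine the characterization of $\Sub\La_w\cap\mod KQ$ as the extension closure $\mathsf{Ext}(\mathcal{L}^Q_\ww)$ from Lemma \ref{ext closed} with the reflection functor identities of Proposition \ref{layer and reflection}. Fix a reduced expression $\ww = s_1\ww'$, where $\ww' = s_{u_2}\cdots s_{u_l}$ is reduced for $s_1 w$. Then $L^1_\ww = S_1$, and for $j\geq 2$ Proposition \ref{layer and reflection} gives that $L^j_\ww$ is a $KQ$-module if and only if $L^{j-1}_{\ww'}$ is a $KQ'$-module, in which case
$$L^j_\ww \cong L^{j-1}_{\ww'}\otimes_\La I_1 \cong R^-(L^{j-1}_{\ww'}).$$
Consequently $\mathcal{L}^Q_\ww = \{S_1\}\cup R^-(\mathcal{L}^{Q'}_{\ww'})$.

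One inclusion is then immediate. By Proposition \ref{torsion sub}, the right-hand side of the claimed equality is a torsion-free class (hence extension closed) in $\mod KQ$; it obviously contains $S_1$, and since every element of $\mathcal{L}^{Q'}_{\ww'}$ lies in $\Sub\La_{s_1w}\cap\mod KQ'$ it also contains $R^-(\mathcal{L}^{Q'}_{\ww'})$. Thus it contains all of $\mathcal{L}^Q_\ww$, and therefore contains its extension closure $\Sub\La_w\cap\mod KQ$.

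For the reverse inclusion, since $S_1=L^1_\ww$ is in $\Sub\La_w\cap\mod KQ$ and this category is closed under direct sums and direct summands, it is enough to show that $R^-(X)\in \Sub\La_w\cap\mod KQ$ for every $X\in \Sub\La_{s_1w}\cap\mod KQ'$. By Lemma \ref{ext closed} applied to $\ww'$, such an $X$ admits a filtration $0 = X_0\subset X_1\subset\cdots\subset X_k = X$ with $X_i/X_{i-1}\in \mathcal{L}^{Q'}_{\ww'}$. Because $\ell(s_1(s_1w)) = \ell(w) > \ell(s_1 w)$, Lemma \ref{socle vanish} yields $\Sub\La_{s_1 w}\subseteq \YY(I_1)$, so every $X_i$ and every successive quotient $X_i/X_{i-1}$ satisfies $\Tor_1^\La(-,I_1)=0$. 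The long exact sequence of $\Tor$ then shows that $-\otimes_\La I_1$ sends each short exact sequence $0\to X_{i-1}\to X_i\to X_i/X_{i-1}\to 0$ to a short exact sequence in $\mod\La$. Using the identification $L^{j-1}_{\ww'}\otimes_\La I_1 = L^j_\ww$ from Proposition \ref{layer and reflection}, this produces a filtration of $R^-(X) = X\otimes_\La I_1$ whose successive quotients are the $KQ$-layers in $\mathcal{L}^Q_\ww$, so $R^-(X)\in \mathsf{Ext}(\mathcal{L}^Q_\ww) = \Sub\La_w\cap\mod KQ$.

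The main technical point is the exactness argument in the last paragraph: one needs to confirm that the functor $R^-=-\otimes_\La I_1$ preserves each of the short exact sequences in the filtration. This is precisely why Lemma \ref{socle vanish} is the key input, as it places the entire filtration inside the $\Tor$-acyclic subcategory $\YY(I_1)$, after which the long exact $\Tor$-sequence converts the filtration of $X$ into a filtration of $R^-(X)$ by layers in $\mathcal{L}^Q_\ww$.
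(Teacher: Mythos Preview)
Your proof is correct and follows essentially the same strategy as the paper: both arguments rest on the layer identity $\mathcal{L}^Q_\ww=\{S_1\}\cup R^-(\mathcal{L}^{Q'}_{\ww'})$ from Proposition~\ref{layer and reflection}, and both must explain why $R^-$ is compatible with passing to extension closures. The paper handles this compatibility by appealing to the equivalence $R^-:\mod KQ'/[S_1']\to\mod KQ/[S_1]$ (using Lemma~\ref{socle vanish} only to ensure $S_1'$ does not appear), and then reduces both sides to extension closures of the same generating set. You instead argue the two inclusions separately: the forward inclusion via Proposition~\ref{torsion sub}, and the reverse by transporting filtrations through $-\otimes_\La I_1$, using Lemma~\ref{socle vanish} to place the whole filtration inside $\YY(I_1)$ so that the $\Tor$ long exact sequence yields exactness. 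Your route is a bit more explicit about \emph{why} the functor preserves the relevant short exact sequences.

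One small point to tighten: you write $R^-(X)=X\otimes_\La I_1$ for an arbitrary $X\in\Sub\La_{s_1w}\cap\mod KQ'$, but Proposition~\ref{layer and reflection} only records this identification for layers. The general isomorphism $M\otimes_\La I_1\cong M\otimes_{KQ'}T$ for $KQ'$-modules $M$ is the content of \cite[Corollary~2.12]{AIRT}, which is what you should cite here. Once that is in place, your filtration of $X\otimes_\La I_1$ by layers of $\ww$ gives $X\otimes_\La I_1\in\Sub\La_w$ via Proposition~\ref{filt layer}, and since $R^-(X)\in\mod KQ$ by definition, the conclusion $R^-(X)\in\Sub\La_w\cap\mod KQ$ follows.
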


\begin{proof}
Let $\ww=s_1s_{u_2}\cdots s_{u_l}$ be a reduced expression of $w$ and 
$\ww'=s_{u_2}\cdots s_{u_l}$. 

First, by Lemma \ref{ext closed}, we have 
\begin{eqnarray}\label{1}\Sub\La_{w}\cap\mod KQ=\mathsf{Ext}(\mathcal{L}^{Q}_{\ww}).\end{eqnarray}

Similarly we have $\Sub\La_{s_1w}\cap\mod KQ'=\mathsf{Ext}(\mathcal{L}^{Q'}_{\ww'})$. Moreover, Lemma \ref{socle vanish} implies this category 
does not contain $S_1'$, the simple injective $KQ$-module at 1. 
Then, because the functor $R^-$ gives an equivalence between $\mod KQ/[S_1]$ and $\mod KQ'/[S'_1]$, we have 
\begin{eqnarray}\label{2}
R^-(\Sub\La_{s_1w}\cap\mod KQ')
=R^-(\mathsf{Ext}(\mathcal{L}^{Q'}_{\ww'}))=\mathsf{Ext}(R^-(\mathcal{L}^{Q'}_{\ww'})).\end{eqnarray}

On the other hand, by Lemma \ref{add torsion-free}, we have 
\begin{eqnarray}\label{3}
\ \ \mathsf{Ext}(R^-(\Sub\La_{s_1w}\cap\mod KQ'),S_1)=\add\{R^-(\Sub\La_{s_1w}\cap\mod KQ'),S_1\}.\end{eqnarray}

Thus, from (\ref{2}) and (\ref{3}),
we get  
\begin{eqnarray}\label{4}\add\{R^-(\Sub\La_{s_1w}\cap\mod KQ'), S_1\}=\mathsf{Ext}(R^-(\mathcal{L}^{Q'}_{\ww'}),S_1).\end{eqnarray} 
Therefore, by (\ref{1}) and (\ref{4}), 
it is enough to show 
$$\mathsf{Ext}(\mathcal{L}^{Q}_{\ww})=\mathsf{Ext}(R^-(\mathcal{L}^{Q'}_{\ww'}),S_1).$$
Because $L^{1}_{\ww}=S_1$, Proposition \ref{layer and reflection} shows that 
$\mathcal{L}^{Q}_{\ww}=\{R^-(\mathcal{L}^{Q'}_{\ww'}),S_1\}$ and the conclusion follows. 
\end{proof}

Next we deal with the case of $\ell(s_1w)>\ell(w)$.

\begin{prop}\label{sw>w}
If $\ell(s_1w)>\ell(w)$, then we have 
$$\Sub\La_w\cap\mod KQ=\Sub\La_{w_{\langle s_1\rangle}}\cap\mod K\overline{Q} .$$
\end{prop}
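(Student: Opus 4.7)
The plan is to reduce the equality to a comparison of layer sets using Lemma~\ref{ext closed}. Since $\ell(w) = \ell(w_{\langle s_1\rangle}) + \ell(w^{\langle s_1\rangle})$, I would begin by picking a reduced expression $\ww = \uu\vv$ for $w$ in which $\uu$ is a reduced expression for $w_{\langle s_1\rangle}$ involving only the generators $s_2,\ldots,s_n$, and $\vv$ is a reduced expression for $w^{\langle s_1\rangle}$. Applying Lemma~\ref{ext closed} both to $\ww$ inside $\La$ and to $\uu$ inside the preprojective algebra of $\overline{Q}$, the statement reduces to the equality of layer sets
$$
\mathcal{L}^Q_\ww \;=\; \mathcal{L}^{\overline{Q}}_\uu
$$
as subcategories of $\mod KQ$ (after identifying $K\overline{Q}$-modules with $\La$-modules supported at vertices other than $1$).

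For the inclusion $\mathcal{L}^{\overline{Q}}_\uu \subset \mathcal{L}^Q_\ww$, I would check that for $1 \leq j \leq \ell(\uu)$ the layer $L^j_\ww$ (built from the ideals $I_{u_1},\ldots,I_{u_j}$, none of which is $I_1$) coincides with the layer $L^j_\uu$ computed in the preprojective algebra of $\overline{Q}$. A direct inspection of the quotient $I_{u_{j-1}}\cdots I_{u_1}/I_{u_j}\cdots I_{u_1}$ shows that $a^*$ acts as zero on $L^j_\ww$ for every arrow $a$ incident to vertex $1$, so it is indeed a $K\overline{Q}$-module, and these layers coincide on the nose.

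The reverse inclusion is the heart of the argument: I would need to show that no layer $L^j_\ww$ with $\ell(\uu) < j \leq \ell(\ww)$ is a $KQ$-module. The hypothesis $\ell(s_1 w) > \ell(w)$ together with the maximality of $w_{\langle s_1\rangle}$ forces every reduced expression for $w^{\langle s_1\rangle}$ to contain $s_1$, and no commutation or braid rewriting can pull an $s_1$ into the $\uu$-prefix. I would prove by induction on the position $j$, starting from $j_0 = \ell(\uu)+1$, that the formula for $L^j_\ww$ produces a module on which $a^*$ acts nontrivially for some arrow $a$ adjacent to vertex $1$, using Proposition~\ref{layer and reflection} (or a suitable analog at the transition point) to propagate the conclusion across consecutive layers. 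The main obstacle is precisely this step: carefully tracking how the nontrivial $a^*$-action is created at $j_0$ and survives through the successive subquotients once we have crossed from $\uu$ into $\vv$.

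Once both inclusions are in place, Lemma~\ref{ext closed} applied on both sides gives
$$
\Sub\La_w \cap \mod KQ \;=\; \mathsf{Ext}(\mathcal{L}^Q_\ww) \;=\; \mathsf{Ext}(\mathcal{L}^{\overline{Q}}_\uu) \;=\; \Sub\La_{w_{\langle s_1\rangle}} \cap \mod K\overline{Q},
$$
which completes the proof.
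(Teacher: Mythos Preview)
Your reduction to layer sets via Lemma~\ref{ext closed} is the right framework, and it is essentially what the paper does as well. However, both directions of your argument have issues compared with the paper's proof.

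For the inclusion $\Sub\La_{w_{\langle s_1\rangle}}\cap\mod K\overline{Q}\subset\Sub\La_w\cap\mod KQ$, the paper simply invokes Proposition~\ref{orders}: since $w_{\langle s_1\rangle}\leq w$ we have $\Sub\La_{w_{\langle s_1\rangle}}\subset\Sub\La_w$, and $\mod K\overline{Q}\subset\mod KQ$ trivially. Your ``direct inspection'' that the layers $L^j_\ww$ for $j\leq\ell(\uu)$ agree with those computed in the preprojective algebra of $\overline{Q}$ is plausible but is actually more work than needed.

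The real problem is the reverse inclusion. You correctly identify that the crux is showing $L^j_\ww$ is not a $KQ$-module for $j>\ell(\uu)$, and you honestly flag this as ``the main obstacle'' without resolving it. Your proposed induction tracking a nontrivial $a^*$-action, bootstrapped from Proposition~\ref{layer and reflection}, does not go through as stated: that proposition concerns the situation where $s_1$ is the \emph{initial} letter of the reduced expression, whereas in $\vv$ the letter $s_1$ can appear anywhere, and there is no evident mechanism to propagate the $a^*$-action across intermediate layers. The paper avoids this difficulty entirely by a two-step argument. First, it shows globally that $\Sub\La_w\cap\mod KQ\subset\mod K\overline{Q}$: for $X$ in the intersection one has $X\cdot I_c=0$, hence $X\cdot I_n\cdots I_2\in\add S_1$; but Lemma~\ref{socle vanish} gives $\Sub\La_w\subset\YY(I_1)$, forcing $X\cdot I_n\cdots I_2=0$, so $X$ is a $K\overline{Q}$-module. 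Second, it observes that if $L^j_\ww$ with $j>\ell(\uu)$ were a $K\overline{Q}$-module, then by \cite[Theorem~2.7]{AIRT} its dimension vector $s_{u_1}\cdots s_{u_{j-1}}\alpha_{u_j}$ would be a root of $\overline{Q}$, contradicting the maximality of $\ell(w_{\langle s_1\rangle})$. This root-system argument is the key idea your proposal is missing.
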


\begin{proof}
We will show $$\Sub\La_w\cap\mod KQ\subset \Sub\La_{w_{\langle s_1\rangle}}\cap\mod K\overline{Q} $$
and the converse is obvious by Proposition \ref{orders}. 

First we will show $\Sub\La_w\cap\mod KQ\subset\mod K\overline{Q}$. 
Note that we have $\La_c\simeq KQ$ in $\mod\La$ and hence a $KQ$-module is a $\La$-module annihilated by $I_c$ \cite[Lemma 2.11]{AIRT}.

Let $X\in\Sub\La_w\cap\mod KQ$. 
Since $X$ is a $KQ$-module, we have $X\cdot I_c=0$. 
Hence we have 
$X\cdot I_c=(X\cdot I_n\cdots I_2)\cdot I_1=0.$
Thus, we get $Y:=X\cdot (I_n\cdots I_2)\in\add(S_1)$. 
On the other hand, we have $X\in\YY(I_1)$ by Lemma \ref{socle vanish}. Therefore we conclude $Y=X\cdot( I_n\cdots I_2)=0$ and hence $X$ is a $K\overline{Q}$-module.  

Then we will show $\Sub\La_w\cap\mod KQ\subset \Sub\La_{w_{\langle s_1\rangle}}\cap\mod K\overline{Q}$. 
If $w=w_{\langle s_1\rangle}$, then it is clear. 
Let $w=w_{\langle s_1\rangle}{}^{\langle s_1\rangle}w$ and assume that ${}^{\langle s_1\rangle}w$ is not the identity. 
Let $\ww=s_{u_1}s_{u_2}\cdots s_{u_h}s_{u_{h+1}}\cdots s_{u_l}$ be a reduced expression of $w$ such that $\ww_{\langle s_1\rangle}=s_{u_1}s_{u_2}\cdots s_{u_h}$. 

By the above argument and Lemma \ref{ext closed}, it is enough to show that $L_\ww^j$ is not $K\overline{Q}$-module if $j>h$. 
Assume that $L_\ww^j$ is $K\overline{Q}$-module for some $j>h$. 
Then, by \cite[Theorem 2.7]{AIRT}, the dimension vectors of the layer 
$L_\ww^j$ is given by a (real) root of $\overline{Q}$
which contradicts the maximality of $\ell(w_{\langle s_1\rangle})$. 
Thus our claim follows.
\end{proof}

As a consequence, we give a proof of Theorem \ref{main1}.

\begin{proof}[Proof of Theorem \ref{main1}]
(a) follows from Propositions \ref{sw<w} and \ref{sw>w}. 

We will show (b). 
By Corollary \ref{torsionpath}, 
$(\Fac I_w\cap\mod KQ,\Sub\La_w\cap\mod KQ)$ is a torsion pair of $\mod KQ$. Therefore, the second equality follows from the first one. 
We will show the first one.

If $w$ is the identity, then it is clear. 
Assume that $w$ is not the identity.  
We will show the statement by induction on the rank of $Q$ and the length of $w$. 

Let $\mathcal{F}:=\Sub\La_w\cap\mod KQ.$ 
First, assume that $\ell(s_1w)<\ell(w)$. 
Then, by (a), we have 
$$\mathcal{F}=\add\{R^-(\Sub\La_{s_1w}\cap\mod KQ'), S_1\}.$$

Let $c':=s_1cs_1$. 
By induction on the length, we have 
$$\Sub\La_{s_1w}\cap\mod KQ'=\C \La_{\pi^{c'}(s_1w)}.$$ 
Since $s_1\pi^{c'}(s_1w)=\pi^{c}(w)$ is a $c$-sortable element, 
 \cite[Theorem 3.8]{AIRT} implies that  
$$\add\{R^-_1(\C \La_{\pi^{c'}(s_1w)}),S_1\}=\C \La_{\pi^{c}(w)}.$$

Next, assume that $\ell(s_1w)>\ell(w)$. 
Then, by (a), we have 
$$\mathcal{F}=\Sub\La_{w_{\langle s_1\rangle}}\cap\mod K\overline{Q}.$$
By induction on the rank, we have 
$$\Sub\La_{w_{\langle s_1\rangle}}\cap\mod K\overline{Q}=\C \La_{\pi^{s_1c}(w_{\langle s_1\rangle})}.$$
Since we have $\pi^{c}(w)=\pi^{s_1c}(w_{\langle s_1\rangle})$, we obtain the assertion.
\end{proof}


  \section{Combinatorics of Cambrian cones}

In subsequent sections, we need the following combinatorial
results on $\pi^c$. {Recall that if $y$ is a $c$-sortable element,
we write $\widehat y^c$ for the maximum element of the $\pi^c$ fibre over $y$,
if there is one.}

\begin{lemm}\label{lemmatwo} $\{w\in W\ |\ \pi^c(w)=y\}$ is finite if and only if there exists $\widehat{y}^c$. 
\end{lemm}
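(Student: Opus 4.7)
Proof plan. For the $(\Leftarrow)$ direction, the argument is immediate: if $\widehat{y}^c$ exists then every $w\in(\pi^c)^{-1}(y)$ satisfies $y\le w\le\widehat{y}^c$, so the fiber is contained in the weak-order interval $[y,\widehat{y}^c]$. Any such interval is finite in a Coxeter group, since its elements have length bounded by $\ell(\widehat{y}^c)$ and $W$ contains only finitely many elements of any given length.

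For the $(\Rightarrow)$ direction, I would proceed as follows. Assume the fiber $(\pi^c)^{-1}(y)$ is finite. Since $y$ itself lies in the fiber and is its unique minimum (by Theorem \ref{pai}, $\pi^c(y)=y$ and $\pi^c(w)\le w$ for all $w$), I need only produce a maximum. The structural ingredient I would invoke is Reading's description of $\pi^c$ as the projection associated to the $c$-Cambrian congruence on the weak order: the fibers are convex subposets of $(W,\le)$ that are closed under taking joins in $W$ whenever those joins exist. Given these properties, the plan is to show that the join of all (finitely many) fiber elements exists in $W$; by join-closure this join then lies in the fiber and is the sought maximum.

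The main obstacle is showing that the required join exists in $W$, since in the infinite-type case the weak order is only a meet-semilattice. I would handle this through inversion sets: form $U:=\bigcup_{w\in(\pi^c)^{-1}(y)}\mathrm{inv}(w)\subseteq\Phi^+$, which is finite because the fiber is finite. I would then argue that $U$ is biclosed in $\Phi^+$, so that $U=\mathrm{inv}(\widehat{y})$ for a unique $\widehat{y}\in W$; this $\widehat{y}$ is the join of the fiber elements, and by join-closure it lies in the fiber. Biclosure of $U$ should follow from the biclosure of each individual $\mathrm{inv}(w)$, the convexity of the fiber, and the finiteness constraint preventing any escape to infinity in $\Phi^+$.

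An alternative representation-theoretic route is available via Theorem \ref{main1}: the fiber corresponds to a finite family of torsion-free classes $\Sub\La_w$ of $\mod\La$ all having the common intersection $\C\La_y$ with $\mod KQ$, and (via Proposition \ref{orders}) ordered by inclusion exactly as the fiber is ordered in $W$. Taking their join in the complete lattice $\torf\La$ yields a natural candidate torsion-free class, but recognizing this join as $\Sub\La_{w'}$ for some $w'\in W$ (necessarily the desired $\widehat{y}$) and verifying that its intersection with $\mod KQ$ is still precisely $\C\La_y$ would require additional care.
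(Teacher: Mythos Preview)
Your $(\Leftarrow)$ direction matches the paper's.

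For $(\Rightarrow)$, your approach diverges from the paper's. The paper argues by induction on $\ell(y)$ and on rank, using the recursive definition of $\pi^c$: when $\ell(sy)<\ell(y)$, left multiplication by $s$ gives an order-preserving bijection from the $\pi^c$-fiber over $y$ to the $\pi^{scs}$-fiber over $sy$, reducing the length; when $\ell(sy)>\ell(y)$, the paper passes to the parabolic $W_{\langle s\rangle}$ to obtain $\widehat{y}^{sc}$ by induction on rank, and then invokes the geometry of Cambrian cones (Theorem~\ref{pidown fibers}, Proposition~\ref{cam generate}, Lemma~\ref{upper facet}) to produce a $W$-chamber below $H_s$ witnessing $\widehat{y}^c$.

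Your biclosure route has a real gap. Coclosure of $U=\bigcup_w\mathrm{inv}(w)$ is indeed easy: if a positive root in the cone over $\alpha,\beta$ lies in some $\mathrm{inv}(w)$, biclosure of that single $\mathrm{inv}(w)$ forces $\alpha$ or $\beta$ into $\mathrm{inv}(w)\subseteq U$. But \emph{closure} does not follow from the ingredients you list. If $\alpha\in\mathrm{inv}(w_1)$ and $\beta\in\mathrm{inv}(w_2)$ with $w_1\ne w_2$, nothing you have written produces a fiber element whose inversion set contains a given root between $\alpha$ and $\beta$. Convexity of the fiber is of no use until you already possess a common upper bound for $w_1$ and $w_2$; and ``finiteness preventing escape to infinity'' is a hope, not an argument. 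Even granting that $\pi^c$-fibers are closed under existing joins (which can be extracted from \cite{RS2}), you are reduced to showing the finite fiber is bounded above in weak order --- and that is exactly the substantive point, equivalent to the existence of $\widehat{y}^c$ itself. Your representation-theoretic alternative faces the same obstacle in disguise: the join in $\torf\La$ certainly exists, but identifying it as $\Sub\La_{w'}$ for some $w'\in W$ is again the question of whether the join exists in $W$.
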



\begin{prop}\label{inductive}
Suppose $x$ and $y$ are $c$-sortable elements such that 
$y$ covers $x$ in the partial order on $c$-sortable elements.
If there exists $\widehat{y}^{c}$, 
then there exists $\widehat{x}^c$ and $\widehat{y}^c>\widehat{x}^c$. 
\end{prop}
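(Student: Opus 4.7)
The plan is to establish the inclusion $(\pi^c)^{-1}(x) \subseteq [\mathrm{id}, \widehat{y}^c]$ in the weak order on $W$. Since weak-order intervals are finite, this makes $(\pi^c)^{-1}(x)$ finite, so $\widehat{x}^c$ exists by Lemma \ref{lemmatwo}. The same inclusion gives $\widehat{x}^c \leq \widehat{y}^c$, and strictness follows automatically from $\pi^c(\widehat{x}^c) = x \neq y = \pi^c(\widehat{y}^c)$.

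For the containment, I would fix $w \in (\pi^c)^{-1}(x)$ and consider the weak-order join $v := w \vee y$, with the goal of showing $\pi^c(v) = y$; once this is established, $v \in (\pi^c)^{-1}(y) = [y, \widehat{y}^c]$ and hence $w \leq v \leq \widehat{y}^c$. One direction, $\pi^c(v) \geq y$, is immediate from the monotonicity of $\pi^c$ (Theorem \ref{pai}) and $v \geq y$. The reverse inequality $\pi^c(v) \leq y$ would follow from the fact that $\pi^c$ is a lattice congruence on the weak order of $W$---a theorem of Reading---giving $\pi^c(w \vee y) = \pi^c(w) \vee_{\mathrm{Camb}} \pi^c(y) = x \vee_{\mathrm{Camb}} y = y$, where the last equality uses $x \leq y$ (which holds because the Cambrian order on $c$-sortable elements is the restriction of the weak order, so the covering hypothesis supplies $x \leq y$ in $W$).

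The main obstacle is the lattice-congruence property of $\pi^c$, together with the existence of the join $v = w \vee y$ in the non-Dynkin case (where $W$ may be infinite and the weak order is only a meet semi-lattice). For the join, one can exploit that $y$ is bounded (since $\widehat{y}^c$ exists forces $y \leq c^N$ for some $N$) to argue that the analysis may be restricted to a finite region of $W$ in which $w$ and $y$ admit a common upper bound. If one wishes to keep the proof entirely self-contained, both issues can be addressed simultaneously by inducting on $\ell(w) + \ell(y)$ and on $|Q_0|$, using the recursive Definition \ref{map pi}: one treats separately the two cases $\ell(s_1 w) < \ell(w)$ and $\ell(s_1 w) > \ell(w)$, where $s_1$ is initial in $c$, reducing respectively to shorter length or to the smaller Coxeter subsystem generated by $\langle s_1 \rangle$, where the inductive hypothesis applies. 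I note that this argument does not actually require that $y$ covers $x$; the conclusion $\widehat{x}^c < \widehat{y}^c$ holds for any $c$-sortable $x \leq y$ with $\widehat{y}^c$ existing.
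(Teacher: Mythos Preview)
Your argument contains a genuine circularity. To obtain the join $w\vee y$ in weak order you invoke ``$\widehat y^{c}$ exists forces $y\le c^{N}$ for some $N$''. That implication is precisely the direction (ii)$\Rightarrow$(i) of Theorem~\ref{general case}, and the paper proves that direction \emph{using} Proposition~\ref{inductive} (via Proposition~\ref{special case}). So it is not available to you here. Without it, in non-Dynkin type the weak order is only a meet-semilattice and you have no a~priori common upper bound for $w$ and $y$; producing one is essentially equivalent to the proposition itself. Even granting $y\le c^{N}$, you would still need $w\le c^{N}$, which is again a statement of the same shape (``$\pi^{c}(w)\le c^{N}$ implies $w$ lies below the top of the $c^{N}$-fibre'') applied to a different element. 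The conditional join-preservation result for $\pi^{c}$ from \cite{RS2} only fires once the join is known to exist, so it does not rescue the argument.

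The inductive alternative you sketch at the end is indeed the route the paper takes, but it is not a routine unwinding of Definition~\ref{map pi}. The paper cases on whether the initial reflection $s$ shortens $x$ and whether it shortens $y$; the delicate case is $\ell(sy)<\ell(y)$ while $\ell(sx)>\ell(x)$, which does not reduce by length or by rank through the recursion for $\pi^{c}$ alone. There the paper uses the fan property of the $c$-Cambrian cones: since $y$ covers $x$, the cones $\Cone_{c}(x)$ and $\Cone_{c}(y)$ meet along their common wall in $H_{s}$, so finiteness of $\Cone_{c}(y)$ forces finiteness of the trace $\Cone_{sc}(x)$ on $H_{s}$, and Lemma~\ref{upper facet} then transports this across $H_{s}$ to produce $\widehat x^{c}$. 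This geometric step is what your sketch is missing, and it is also where the covering hypothesis is genuinely used.
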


\begin{lemm}\label{max is max} Let $y$ be $c$-sortable.  If
  $\{w \mid \pi^c(w)=y\}$ has a maximal element, then that element
  is actually the maximum.  \end{lemm}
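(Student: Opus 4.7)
My plan is to prove the lemma by induction on the pair $(n, \ell(y))$ in lexicographic order, where $n$ is the rank of $W$, exploiting the recursive definition of $\pi^c$ in Definition \ref{map pi}. Write $F := (\pi^c)^{-1}(y)$ and let $s$ be the initial letter of $c$.

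In the case $sy < y$, I would write $y = sy'$, where $y'$ is $scs$-sortable by a standard property of sortable elements. The first clause of the recursion for $\pi^c$ then identifies $F$ bijectively with $(\pi^{scs})^{-1}(y')$ via $w' \mapsto sw'$. Since left multiplication by $s$ restricts to an order-isomorphism of the right weak order between $\{w : sw > w\}$ and $\{w : sw < w\}$, this bijection is an isomorphism of posets; because $\ell(y') < \ell(y)$, the inductive hypothesis applied to $(\pi^{scs})^{-1}(y')$ transports to $F$.

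In the case $sy > y$, one has $y \in W_{\langle s \rangle}$, and the second clause of the recursion realises
\[
F = \{w \in W : sw > w,\ w_{\langle s \rangle} \in G\},
\]
where $G := (\pi^{sc})^{-1}(y)$ is a fiber inside the rank-$(n-1)$ Coxeter group $W_{\langle s \rangle}$. The projection $\phi : F \to G$, $w \mapsto w_{\langle s \rangle}$, is order-preserving and surjective (order-preservation uses that $u \in W_{\langle s \rangle}$ with $u \leq w$ forces $u \leq w_{\langle s \rangle}$). By the inductive hypothesis on rank, $G$ has a unique maximum $u^*$ whenever it has a maximal element. To finish, I would show (a) that a maximal $w \in F$ must satisfy $\phi(w) = u^*$, and (b) that the sub-fiber $F_{u^*} := \phi^{-1}(u^*)$ has a unique maximum.

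The main obstacle is analysing $F_{u^*}$. Its elements have the shape $u^* \cdot v$ with $v$ constrained both by the descent condition $sw > w$ and by the requirement that $u^*$ remain the longest prefix from $W_{\langle s \rangle}$ in $w$. I expect $F_{u^*}$ to admit a recursive description as a fiber of $\pi^{c'}$ in a residual Coxeter-theoretic setting (obtained after ``peeling off'' $u^*$), allowing a further application of the inductive hypothesis to conclude (b). Step (a) should then follow by a lifting argument: given $w \in F$ with $\phi(w) = u < u^*$, one selects a chain of covers from $u$ to $u^*$ in $G$ and shows, using the structure of $F_{u^*}$ established in (b), that each cover-step can be lifted to a covering relation in $F$ starting from $w$, eventually producing an element of $F$ strictly above $w$ and contradicting the maximality of $w$.
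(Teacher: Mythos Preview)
Your induction scheme and the treatment of the case $sy<y$ match the paper's proof. The difficulty is entirely in the case $sy>y$, and here your plan has a genuine gap.

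The proposed recursive description of $F_{u^*}=\{w:w_{\langle s\rangle}=u^*\}$ as a $\pi$-fibre in some residual Coxeter system is not available in any obvious way. The elements of $F_{u^*}$ are the $u^*v$ with $\ell(u^*v)=\ell(u^*)+\ell(v)$ and with the additional constraint that no further letter of $\langle s\rangle$ can be absorbed on the left of $v$ into $u^*$; this constraint depends intricately on $u^*$ and does not cut out a parabolic subgroup or a $\pi^{c'}$-fibre for any Coxeter element $c'$. So there is nothing to which the inductive hypothesis can be applied, and step~(b) does not go through as stated. Step~(a) is likewise problematic: the lifting of a cover $u\lessdot u'$ in $G$ to a cover starting at a given $w\in F$ with $\phi(w)=u$ is exactly what is not clear combinatorially --- there is no reason the simple reflection realising $u\lessdot u'$ should produce an element of $F$ above $w$.

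The paper circumvents both obstacles by passing to the geometry of Cambrian cones. Given a maximal $x\in F$, one moves from the interior of $xD$ in the direction $-\rho_s$; since every bounding hyperplane of $\Cone_c(y)$ other than $H_s$ contains $\rho_s$ (Proposition~\ref{cam generate}), the first wall crossed must lie in $H_s$, so $xD$ has a facet on $H_s$. The corresponding $W_{\langle s\rangle}$-chamber $xD\cap H_s$ gives an element $x'\in W_{\langle s\rangle}$ which, by the same cone argument, is maximal in $G$; induction on rank makes it the maximum $\widehat y^{sc}$. Lemma~\ref{upper facet} then says there is a \emph{unique} $W$-chamber having the $\widehat y^{sc}$-chamber as its upper facet and that this chamber gives $\widehat y^c$; since $xD$ is that chamber, $x=\widehat y^c$. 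This replaces your steps~(a) and~(b) by a single geometric uniqueness statement, and it is this use of Proposition~\ref{cam generate} and Lemma~\ref{upper facet} that your combinatorial outline is missing.
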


In this section, we establish these results, using
the \emph{Cambrian cones} studied by Reading-Speyer (we refer to \cite{RS1,RS2}, but see also \cite{R1,R2,R3} for earlier work). We begin by briefly
recalling some background.

We recall some basic terminology (see \cite{BB,H,Bo}).
Let $V$ be a real vector space of dimension $n$ with a basis $\alpha_i$ ($i\in Q_0$) and let $V^*$ be the dual vector space with {dual} basis $\rho_i$. 
We write $\br{x^*,y}$ for the canonical pairing of $x^*\in V^*$ and $y\in V$. 
We define a symmetric bilinear form on $V$ by $(\alpha_i,\alpha_i)=2$ for any $i\in Q_0$ and for $i\neq j$, $(\alpha_i,\alpha_j)$ is the negative of the number of edges between the 
vertices $i$ and $j$ of $Q$. 
We define $s_i(\alpha_j)=\alpha_j-(\alpha_i,\alpha_j)\alpha_i$.  The group
generated by these reflections is the Coxeter group $W$ associated to $Q$. 
Note that $w\in W$ naturally acts on $V$ and $V^*$, 
and $\rho_i$ is fixed by $W_{\langle s_i\rangle}$.

Let $\Phi=\{w(\alpha_i)\}_{i\in Q_0,w\in W}$ be the set of (real) roots and 
$\Phi^+$ the set of positive roots. 
To each root $\beta\in\Phi^+$, define a hyperplane 
$H_\beta:=\{v^*\in V^*\ |\ \langle v^*,\beta\rangle=0\}.$  The connected
components of $V^*\setminus \bigcup_{\beta\in \Phi^+}H_\beta$ we refer to as \emph{chambers}.
Let $D:=\bigcap_{i \in Q_0} \{v^*\in V^*\ |\ \br{v^*,\alpha_i}\ge 0\}$ be the \emph{dominant chamber}. It gives a fundamental domain for the action of $W$ on the Tits cone $\cup_{w\in W} wD$. We refer to $\{wD\mid w\in W\}$ as the collection of
$W$-chambers.  
We say that $wD$ is below $H_\beta$ (respectively, above $H_\beta$) if it is contained in $H_\beta^+:=\{v^*\in V^*\ |\ \langle v^*,\beta\rangle\geq0\}$ (respectively,  $H_\beta^-:=-H_\beta^+$). 
Moreover, let $\Phi_{\langle s_i\rangle}$ denote the roots
  corresponding to reflections of $W_{\langle s_i\rangle}$, and
let $V_{\langle s_i\rangle}$ be the
hyperplane in $V$ spanned by $\Phi_{\langle s_i\rangle}$. Let
$D_{\langle s_i\rangle}:=\bigcap_{ j \in Q_0\setminus\{i\}} \{v^*\in V_{\langle s_i\rangle}^*\ |\ \br{v^*,\alpha_j}\ge 0\}.$ 

We denote by $d(wD,vD)$ the set of hyperplanes which separate $wD$ and $vD$.
Then the following result is well-known \cite{Bo}.

\begin{lemm}\label{partial cone}
Let $w,v\in W$. The following conditions are equivalent.
\begin{itemize}
\item[(i)] $w\geq v$. 
\item[(ii)] $d(wD,D)\supseteq d(vD,D)$.
\end{itemize}
\end{lemm}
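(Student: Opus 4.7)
The plan is to rephrase the geometric condition (b) as an inclusion of inversion sets, and then invoke the standard combinatorial characterization of the right weak order.

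First I would identify $d(wD,D)$ with an inversion set. Since every positive root $\beta$ is a nonnegative combination of the simple roots, we have $\br{v^*,\beta}\ge 0$ for all $v^*\in D$, so $D\subseteq H_\beta^+$. Therefore $H_\beta$ separates $D$ from $wD$ iff $wD\subseteq H_\beta^-$, and using the $W$-equivariance $\br{w u^*,\beta}=\br{u^*,w^{-1}\beta}$, this happens precisely when $w^{-1}\beta\in -\Phi^+$. Hence
\[
d(wD,D)\;=\;\{H_\beta\mid \beta\in N(w)\},\qquad N(w):=\Phi^+\cap w(-\Phi^+),
\]
and condition (b) becomes $N(v)\subseteq N(w)$.

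Next I would invoke the classical equivalence $v\le w \Longleftrightarrow N(v)\subseteq N(w)$ for the right weak order (see \cite{Bo} or \cite{BB}). The implication $(a)\Rightarrow(b)$ is straightforward by induction on $\ell(w)-\ell(v)$: the key identity is that $\ell(ws)=\ell(w)+1$ forces $N(ws)=N(w)\sqcup\{w\alpha_s\}$, so that extending a reduced expression of $v$ to one of $w$ only enlarges the inversion set. For the converse $(b)\Rightarrow(a)$, given $N(v)\subseteq N(w)$ and a reduced expression $w=s_{i_1}\cdots s_{i_r}$, one extracts a reduced subexpression for $v$ by repeated application of the strong exchange condition, thereby exhibiting $v\le w$.

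The main obstacle is the converse direction, which depends on the strong exchange condition of Coxeter groups; however, since the lemma is advertised as well-known with an explicit reference to Bourbaki, I would simply quote the classical result rather than reproduce its proof.
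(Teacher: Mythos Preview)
Your proposal is essentially correct and matches the paper's treatment: the paper does not prove this lemma at all, merely citing Bourbaki as a classical fact, and your plan is likewise to translate $d(wD,D)$ into the inversion set $N(w)=\Phi^+\cap w\Phi^-$ and then quote the standard equivalence between right weak order and containment of (left) inversion sets. That reduction is correct and is exactly the content behind the citation.

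One small slip in your sketch of $(b)\Rightarrow(a)$: invoking the strong exchange condition to extract a reduced \emph{subexpression} of $v$ inside a reduced expression for $w$ yields $v\le w$ in the \emph{Bruhat} order, not the weak order; for the right weak order you need $v$ to occur as a \emph{prefix}. The usual argument instead proceeds by induction on $\ell(v)$: if $N(v)\subseteq N(w)$ and $s$ is a simple reflection with $\ell(sv)<\ell(v)$, then $\alpha_s\in N(v)\subseteq N(w)$ forces $\ell(sw)<\ell(w)$, and one checks $N(sv)=s(N(v)\setminus\{\alpha_s\})\subseteq s(N(w)\setminus\{\alpha_s\})=N(sw)$, whence $sv\le_R sw$ by induction and so $v\le_R w$. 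Since you (and the paper) ultimately just cite \cite{Bo} or \cite{BB}, this does not affect the validity of your proposal, but the distinction is worth keeping straight.
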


{For the remainder of the section}, let $s$ be an initial reflection of $c$ up to commutation.
(We can simply take $s=s_1$.)  We write $\alpha_s$ for the corresponding simple
root, and $H_s$ for the corresponding hyperplane.
Now, we state a definition from \cite{RS2}, which associates to each  $c$-sortable element $x$ of $W$ a set of $n$ vectors, denoted $C_c(x)$. This construction is inductive; the definition of $C_c(x)$ assumes that the map has
  already been defined for standard parabolic subgroups of $W$, and for elements of $W$ whose length is smaller than that of $x$.
For a $c$-sortable element $x$,  we define
\[C_c(x)=\left\lbrace\begin{array}{ll}
C_{sc}(x)\cup\{\alpha_s\}&\mbox{if } \ell(sx)> \ell(x)\\
s C_{scs}(sx)&\mbox{if } \ell(sx)<\ell(x).
\end{array}\right.\]
The basis of
  the induction is the trivial subgroup of $W$, for which we have $C_e(e)=\emptyset$.
Note that $C_c(x)$ is a basis of $V$. 
Then define the \emph{$c$-Cambrian cone}
\[\Cone_c(x)=\bigcap_{\beta \in C_c(x)}\{v^*\in V^*\ |\ \br{v^*,\beta} \geq 0\}.\]

Then the following is one of the main results of \cite{RS2}, which gives a geometric description of the fibers of $\pi^c$.

\begin{theorem} \label{pidown fibers}
  Let $x$ be $c$-sortable.
\begin{itemize}
\item[(a)]  Then $\pi^c(w)=x$ if and only if $w D \subset \Cone_c(x)$ for $w\in W$. 
\item[(b)]  Moreover, the set $\{w\in W\ |\ \pi^c(w)=x\}$ is finite if and only if 
$\Cone_c(x)$ is a finite union of Coxeter chambers.
\end{itemize}
\end{theorem}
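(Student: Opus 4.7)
The plan is to prove part (1) by induction on $\ell(x) + n$ (with $n$ the rank of $W$), matching the recursive definition of $\pi^c$ (Definition \ref{map pi}) against the recursive definition of $\Cone_c(x)$ step by step; part (2) will then follow as a short packing argument. For the base case $x = \id$, unfolding the recursion gives $C_c(\id) = \{\alpha_1, \ldots, \alpha_n\}$, so $\Cone_c(\id) = D$, while the recursion for $\pi^c$ forces $\pi^c(w) = \id$ to imply $w = \id$ by an inner induction on $\ell(w)$.

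A key preliminary observation is that $\pi^c$ is compatible with $s$-descents: if $\pi^c(w) = x$, then $\ell(sx) < \ell(x)$ if and only if $\ell(sw) < \ell(w)$. Indeed, if $s$ is a left descent of $x$, then $s \le x$ in the weak order, so combined with $\pi^c(w) \le w$ from Theorem \ref{pai} we get $s \le w$; conversely, if $\ell(sw) < \ell(w)$, the recursion gives $\pi^c(w) = s\pi^{scs}(sw)$, which plainly has $s$ as a left descent. This reduces the inductive step to two parallel cases.

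In the case $\ell(sx) < \ell(x)$, $sx$ is $scs$-sortable with $\ell(sx) = \ell(x)-1$, and $\Cone_c(x) = s \cdot \Cone_{scs}(sx)$ by definition. Restricting to $w$ with $\ell(sw) < \ell(w)$, we have $\pi^c(w) = x$ iff $\pi^{scs}(sw) = sx$, which by the inductive hypothesis is equivalent to $(sw)D \subset \Cone_{scs}(sx)$, equivalently $wD = s \cdot ((sw)D) \subset s \cdot \Cone_{scs}(sx) = \Cone_c(x)$. In the case $\ell(sx) > \ell(x)$, $x$ lies in $W_{\langle s\rangle}$ and is $sc$-sortable there, with $\Cone_c(x) = \Cone_{sc}(x) \cap H_s^+$; restricting to $w$ with $\ell(sw) > \ell(w)$, we have $\pi^c(w) = \pi^{sc}(w_{\langle s\rangle})$, and by the rank induction applied inside $W_{\langle s\rangle}$ this equals $x$ iff $w_{\langle s\rangle}D_{\langle s\rangle} \subset \Cone_{sc}(x) \subset V^*_{\langle s\rangle}$, which lifts back to $wD \subset \Cone_c(x) \subset V^*$ using that the projection $V^* \to V^*_{\langle s\rangle}$ carries $wD$ onto $w_{\langle s\rangle}D_{\langle s\rangle}$ when $\ell(sw) > \ell(w)$ and that $H_s^+$ already contains $wD$.

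For part (2), once part (1) is in place, the portion of $\Cone_c(x)$ lying in the Tits cone $\bigcup_{w\in W}\overline{wD}$ coincides with $\bigcup\{\overline{wD} : \pi^c(w) = x\}$; since each Coxeter chamber is full-dimensional, the cone is a finite union of Coxeter chambers if and only if this union is finite if and only if the fiber $(\pi^c)^{-1}(x)$ is finite. The main obstacle will be the geometric bookkeeping of the parabolic reduction — specifically, verifying that for $\ell(sw) > \ell(w)$ the chamber $wD$ indeed projects onto $w_{\langle s\rangle}D_{\langle s\rangle}$ under $V^* \to V^*_{\langle s\rangle}$ — together with the auxiliary geometric containments $\Cone_c(x) \subset H_s^\pm$ (matching the $s$-descent parity of $x$) which are needed to confirm that the descent-incompatible sub-cases contribute nothing to either side of the equivalence.
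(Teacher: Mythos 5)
This is a statement the paper does not prove at all: Theorem \ref{pidown fibers} is imported verbatim from Reading--Speyer \cite{RS2}, so there is no in-paper argument to compare against. Your sketch essentially reconstructs the Reading--Speyer proof (parallel induction on $\ell(x)$ and on rank, matching the recursion defining $\pi^c$ against the recursion defining $C_c(x)$), and the ``only if'' direction of part (1) goes through as you describe, modulo the routine verification that inversions of $w$ inside $\Phi_{\langle s\rangle}$ are exactly those of $w_{\langle s\rangle}$.

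There is, however, one genuine gap in the ``if'' direction, and it is mislabelled as bookkeeping. When $\ell(sx)>\ell(x)$ the containment $\Cone_c(x)\subseteq H_s^+$ is indeed free, since $\alpha_s\in C_c(x)$ by definition. But when $\ell(sx)<\ell(x)$ you need $\Cone_c(x)\subseteq H_s^-$ (equivalently, $-\alpha_s$ lies in the cone spanned by $C_c(x)$) in order to rule out chambers $wD$ with $\ell(sw)>\ell(w)$. Unwinding the definition once gives $C_c(x)=sC_{scs}(sx)$, so this is equivalent to $\alpha_s\in\mathrm{cone}(C_{scs}(sx))$ --- but $s$ is \emph{final}, not initial, in $scs$, so your induction (which only ever peels off the initial letter) cannot reach this statement. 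One must prove, as a separate lemma with a strengthened inductive hypothesis, that for \emph{every} simple reflection $s$ and every $c$-sortable $v$, the sign of $\langle -,\alpha_s\rangle$ on $\Cone_c(v)$ matches whether $s$ is a left inversion of $v$; this is real work in \cite{RS2} (their analysis of the sets $C_c(v)$ in Section 5), not a formality. A second, smaller gap: in part (2), finiteness of the fibre gives you that $\Cone_c(x)\cap(\textnormal{Tits cone})$ is a finite union of chambers, but the assertion is that $\Cone_c(x)$ itself is; you still need to exclude that $\Cone_c(x)$ protrudes outside the Tits cone. This can be done (a segment from a point of $xD$ to a hypothetical point of $\Cone_c(x)$ outside the Tits cone would cross infinitely many chambers, each of which lies entirely in $\Cone_c(x)$ because chambers never straddle a root hyperplane, contradicting finiteness of the fibre), but it is not covered by the argument you give.
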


Statement (a) of the theorem is 
  \cite[Theorem 6.3]{RS2}. The additional content of (b)
  is that if $\{w\in W\ |\ \pi^c(w)=x\}$ is finite then $\Cone_c(x)$ cannot
  extend outside the Tits cone. If it did, then, since it also contains the
  chamber $xD$ which is inside the Tits cone, it would necessarily also contain in its interior a point on the boundary of the Tits cone. But this would imply
  that it contains an infinite number of $W$-chambers, which would contradict
  (a).

\begin{exam}\label{chamber}
\begin{itemize}
\item[(a)] Let $Q$ be a quiver of type $A_2$. Then $W$-chambers are given as follows.
\[\xymatrix@C4pt@R10pt{   & &  w_0D &&\\
& s_2s_1D&  &s_1s_2D&\\
\ar@{-}[rrrr]^{}&  &  &&\\
& s_2D&  &s_1D&\\
&\ar@{<-}[rruuuu]  &  D&\ar@{<-}[lluuuu]& \\
}\]

Then, for example, $C_c(s_2)=\{-\alpha_2,\alpha_1\}$ and $\Cone_c(x)$ is a union of 
$s_2D$ and $s_2s_1D$. These cones correspond to elements such that $\{w\in W\ |\ \pi^c(w)=s_2\}$. 

\item[(b)]Let $Q$ be a quiver of type $\tilde{A}_1$. 
  Then one can check that any element of $s_2$, $s_2s_1$, $s_2s_1s_2$, $\ldots,$ satisfies $\pi^c(w)=s_2$ and these cones are contained in (but do not
  exhaust) $\Cone_c(s_2)$.
\end{itemize}
\end{exam}

Consider the reflection hyperplanes corresponding to the
  reflections in $W_{\langle s\rangle}$. These hyperplanes are perpendicular
  to roots in  the root system $\Phi_{\langle s\rangle}$, and therefore all pass
  through $\rho_s$. Quotienting by $\mathbb R\rho_s$, we would obtain the
  Coxeter arrangement for $W_{\langle s\rangle}$ in $V^*/\mathbb R\rho_s$;
  as it stands we can still
  identify the chambers of this subarrangement in $V^*$
  with the elements of $W_{\langle s\rangle}$; the difference compared to the usual Coxeter arrangement is that these chambers all contain the line spanned by $\rho_s$. Similarly, we can view the Cambrian cones for $W_{\langle s\rangle}$ in
  $V^*$ instead of $V^*/\mathbb R\rho_s$.

  Now observe that we can also  take the Coxeter fan
  and Cambrian cones for $W_{\langle s\rangle}$ which we have been considering
  in $V^*$ and intersect them with $H_s$. This defines a copy of the Coxeter
  fan and Cambrian cones, drawn in $H_s$. Essentially, we are exploiting the
  natural identification of $V^*/\mathbb R\rho_s$ with $H_s$.

Note that if $y$ is a $sc$-sortable element, {then it is also
  $c$-sortable, and $\ell(sy)>\ell(y)$. In this case,} we have $\Cone_c(y)\subset H_s^+.$ 
Then we have the following property (see also \cite[Proposition 9.6]{RS2}, \cite[Lemma 6.2]{RS1}, \cite[Lemma 3.12]{HLT}). 

\begin{prop}\label{cam generate}
Let $y\in W_{\langle s\rangle}$ be $sc$-sortable. 
Then $\Cone_c(y)$ is generated by $\Cone_{sc}(y)$
drawn in $H_s$ and by $\rho_s$.
\end{prop}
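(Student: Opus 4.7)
The plan is to reduce the statement to a direct decomposition of a half-space intersection, after two preparatory observations: that $C_c(y)=C_{sc}(y)\cup\{\alpha_s\}$, and that every root in $C_{sc}(y)$ lies in the parabolic subspace $V_{\langle s\rangle}$ spanned by $\{\alpha_i : i\ne s\}$.

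The first fact is immediate: since $y\in W_{\langle s\rangle}$, no reduced expression of $y$ involves $s$, so $\ell(sy)>\ell(y)$, and the recursive definition of $C_c$ gives $C_c(y)=C_{sc}(y)\cup\{\alpha_s\}$. Consequently
\[
\Cone_c(y)=\Cone_{sc}(y)\cap H_s^+.
\]
The second fact I will prove by induction on $\ell(y)$ using the recursion that defines $C_{sc}$. The base case $y=\id$ gives $C_{sc}(\id)=\{\alpha_i:i\ne s\}\subset V_{\langle s\rangle}$. For the inductive step, let $s'\ne s$ be initial in $sc$: if $\ell(s'y)>\ell(y)$ then $C_{sc}(y)=C_{s'sc}(y)\cup\{\alpha_{s'}\}$, and each summand lies in $V_{\langle s\rangle}$ by induction and because $\alpha_{s'}\in V_{\langle s\rangle}$; if $\ell(s'y)<\ell(y)$ then $C_{sc}(y)=s'\,C_{s'scs'}(s'y)$, and since $s'$ stabilizes $V_{\langle s\rangle}$ the inductive hypothesis again yields $C_{sc}(y)\subset V_{\langle s\rangle}$.

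With these in hand the rest is a routine decomposition. Since $\rho_s$ is dual to $\alpha_s$ and annihilates every other simple root, $\langle\rho_s,\beta\rangle=0$ for every $\beta\in C_{sc}(y)$ while $\langle\rho_s,\alpha_s\rangle=1>0$; in particular $\rho_s\in\Cone_{sc}(y)\cap H_s^+$. Given any $w^*\in\Cone_c(y)$, set $t=\langle w^*,\alpha_s\rangle\ge 0$ and $v=w^*-t\rho_s$; then $v\in H_s$ and $\langle v,\beta\rangle=\langle w^*,\beta\rangle\ge 0$ for all $\beta\in C_{sc}(y)$, so $v\in\Cone_{sc}(y)\cap H_s$ and $w^*=v+t\rho_s$. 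The reverse inclusion is immediate from the linearity of the defining inequalities, and the proposition follows.

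The only step demanding attention is the inductive verification that $C_{sc}(y)\subset V_{\langle s\rangle}$; everything else amounts to the duality between $\rho_s$ and $\alpha_s$ combined with the recursive description of $C_c$.
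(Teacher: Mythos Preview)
Your argument is correct and follows the same line as the paper's proof: both rest on the observation that $C_c(y)=C_{sc}(y)\cup\{\alpha_s\}$ together with the fact that every $\beta\in C_{sc}(y)$ lies in $V_{\langle s\rangle}$ (equivalently, each bounding hyperplane of $\Cone_{sc}(y)$ contains $\rho_s$); the paper states this last fact in one sentence, while you spell out both the inductive justification and the explicit decomposition $w^*=(w^*-t\rho_s)+t\rho_s$. One small point: your induction is phrased as being on $\ell(y)$, but in the branch $\ell(s'y)>\ell(y)$ the length does not drop, only the rank of the ambient parabolic does, so the induction should really be on the pair (rank, length) or on the depth of the recursion defining $C_{sc}$.
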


\begin{proof} By definition, the bounding hyperplanes of $\Cone_c(y)$ are
  $H_s$ and the bounding hyperplanes of $\Cone_{sc}(y)$.  These latter
  hyperplanes are reflection hyperplanes from $W_{\langle s\rangle}$,
  and therefore all pass through $\rho_{ s}$. \end{proof}

Example \ref{chamber} also gives an example of this proposition.

  We write $\Inv(w)$ for $w (\Phi^-) \cap \Phi^+$. This is a finite set of
  positive roots, and it characterizes $w$: the roots in
  $\Inv(w)$ are the labels of the hyperplanes which separate $wD$ from $D$. 

In Section \ref{back-cg}, we have already discussed the map from
  $W$ to $W_{\langle s\rangle}$ sending $w$ to $w_{\langle s\rangle}$, the longest
  prefix of $w$ contained in $W_{\langle s\rangle}$. This map can also be described
  in terms of $\Inv$: the element
  $w_{\langle s\rangle}$ can be characterized by the property that
  $\Inv(w_{\langle s\rangle})=\Inv(w)\cap \Phi_{\langle s\rangle}$.
Geometrically, we can think of $W_{\langle s\rangle}$ as the reflection group
  generated by reflections in the hyperplanes corresponding to roots in
  $\Phi_{\langle s\rangle}$.  Then $w_{\langle s\rangle}$ is the element of  
  $W_{\langle s\rangle}$ corresponding
to the chamber for $W_{\langle s\rangle}$ which contains $wD$.

  From the previous proposition, we get the following:

  \begin{lemm}\label{sc-fibre} Let $y$ be an $sc$-sortable element of
    $W_{\langle s\rangle}$. The $\pi^c$ fibre over $y$ consists of the elements
    of $W$ such that $w_{\langle s\rangle}$ is in the $\pi^{sc}$ fibre of
    $y$ and such that $\alpha_s$ is not in the inversion set of $w$.
  \end{lemm}

  \begin{proof} The first condition is exactly the condition of being on the
    {correct} side of the bounding hyperplanes of $\Cone_{sc}(y)$, while the second
    condition is equivalent to being on the {correct} side of $H_s$.
    \end{proof}

  



    We have the following important theorem from \cite{RS2}:

  \begin{thm}[{ \cite[Theorem 7.3]{RS2}}]\label{rs-th} For $X$ any subset of $W$, \begin{itemize}
\item[(a)] if $X$ is non-empty then
      $\bigwedge_{w\in {X}}\pi^c(w)= \pi^c(\bigwedge_{w\in {X}} w)$, and
\item[(b)] if $X$ has an upper bound then
$\bigvee_{w\in {X}}\pi^c(w)= \pi^c(\bigvee_{w\in {X}} w)$
    \end{itemize}
  \end{thm}

        The following useful lemma is an immediate consequence:

        \begin{lemm} \label{fibre-lemma} Let $X$ be a collection of elements in a single
          fibre for
  $\pi^c$. Their meet is also in the same fibre. Their join, if it exists,
          is also in the same fibre. \end{lemm}

        The following technical lemma plays a key role in our arguments.
        
        \begin{lemm} \label{infinite-lemma} Suppose that $r$ and $t$ are simple reflections which
          generate an infinite dihedral group, and suppose that we have
          three elements of $W$, say $z$, $zr$, and $zt$, with
          $\ell(zr)=\ell(zt)=\ell(z)+1$. Suppose $zr$ is in the same
          $\pi^c$ fibre as $z$. Then all the elements
          of the form $zrtrt\dots$ are contained in the same fibre as $z$,
          and all the elements of the form $ztrtr\dots$ are in the same
          fibre as $zt$.
        \end{lemm}

        (Note that the statement of the lemma covers both the case where $zt$ and $z$ are in different fibres, and where they are in the same fibre.)
        

        \begin{proof}
          Suppose that $z$ is in the fibre over the $c$-sortable
          element $y$. We will prove the result by induction on length of $y$
          and
          rank of $W$.

Suppose first that $\ell(y)>\ell(sy)$.  
Left-multiplication by $s$ is an order-preserving bijection from 
$A=\{w\in W\ | \ell(sw)<\ell(w)\}$ to $B=\{w\in W\ | \ell(sw)>\ell(w)\}$.
By the recursive definition of $\pi^c(w)$ ({see Definition \ref{map pi}}),
left multiplication by $s$ sends the fibres of $\pi^c$ in
$A$ to fibres of $\pi^{scs}$ in $B$. 
By induction on length, the statement holds for $sy$ and therefore for $y$. 

Suppose then that {$\ell(y)<\ell(sy)$}, so $y$ is an $sc$-sortable element of
$W_{\langle s \rangle}$. 
          The hyperplane between $z D$ and $zr D$ is perpendicular to a
certain positive root $\beta$, and the hyperplane between $z$ and $zt$
is perpendicular to a positive root $\gamma$. Suppose first that $\beta$ and $\gamma$ both lie in
$\Phi_{{\langle s\rangle}}$. 
Since
$\beta$ and $\gamma$ are in $\Phi_{{\langle s\rangle}}$, the corresponding
  hyperplanes also bound the chamber for $W_{\langle s\rangle}$ corresponding to $z_{\langle s \rangle}$. By the induction hypothesis
  for $W_{\langle s \rangle}$, the $W_{\langle s\rangle}$-chambers corresponding to
  $zrt$, $zrtr$, \dots all lie in $\Cone_{sc}(y)$, and
  the $W_{\langle s\rangle}$-chambers corresponding to
  $zt$, $ztr$, \dots all lie in $\Cone_{sc}(\pi(zt))$. 
  The $W$-chambers corresponding to all
  these elements also lie on the correct side
  of $H_s$ (since $zD$ does, and none of the hyperplanes crossed in passing
  from $z$ to $zr$ to $zrt$ to $zrtr$, etc., is $H_s$). The desired result
  then follows from Lemma \ref{cam generate}.

Now suppose
at least one of $\beta$ and $\gamma$ does not
lie in $\Phi_{\langle s \rangle}$. The infinite dihedral group generated by $r$ and $t$ corresponds bijectively to the $W$-chambers
around the intersection of the hyperplanes perpendicular to
$\beta$ and $\gamma$. The roots corresponding to the
other hyperplanes which contain this intersection
are positive combinations of $\beta$ and $\gamma$, and therefore none
of them is in $\Phi_{{\langle s\rangle}}$. Also, none of them is $H_s$. The desired
result now 
follows from Lemma \ref{sc-fibre}.
\end{proof}


\begin{proof}[Proof of Lemma \ref{lemmatwo}]
  If $\widehat y^c$ exists, then all the elements $w$ satisfying
  $\pi^c(w)=y$ are of length at most $\ell(\widehat y^c)$, and therefore form a finite set.  
  



{Conversely, suppose that $\{w \in W\mid \pi^c(w)=y\}$ is finite.}
If the elements of the $\pi^c$ fibre over $y$ have a common upper
bound, then they have a join and by Lemma \ref{fibre-lemma}
the join is in the fibre, so the fibre has a maximum element.

We must now justify that the elements of the
$\pi^c$ fibre over $y$ have a common upper bound.
Suppose otherwise. If we number the elements
of the fibre $f_1,\dots, f_r$, and successively consider $f_1\vee f_2$,
$(f_1\vee f_2) \vee f_3$, etc., we will eventually find a pair of elements
of the fibre with no join. Rename these elements as $p$ and $q$.

We will now argue that we can find a pair of elements $p'$, $q'$ in the fibre which have no upper bound and either: \begin{itemize}
\item $p'$ and $q'$ cover the same element, or
\item $p'\wedge q'> p\wedge q$. \end{itemize}
Let $p\wedge q=z$. Choose $a,b$ so $p\geq a \gtrdot z$ and $q \geq b \gtrdot z$.
If $a$ and $b$ have no join, then we have found our pair of elements (satisfying the first condition). If they
do have a join, let it be $v$. If $p$ and $v$ have no upper bound, we have
found our pair of elements (satisfying the second condition). Othewise, let
$w=p\vee v$, and we can take $w$ and $q$ as our pair of elements (again
satisfying the second condition).

Since by assumption the fibre is finite, if we keep repeating this procedure,
we must eventually find a pair satisfying the first condition, so suppose
we have elements $p,q$ both covering $z$. Write $p=zt$, $q=zr$, with $t$ and
$r$ simple reflections. Lemma \ref{infinite-lemma} now shows that there
are infinitely many elements in the fibre over $y$, contradicting our assumption.

We have reached a contradiction starting from our assumption that the fibre
of $y$ does not have an upper bound, so it must be that the
fibre of $y$ does have an upper bound, and, as we have showed, it
therefore has a join.
\end{proof}

  \begin{proof}[Proof of Proposition \ref{inductive}]
    Suppose that there is an element of the fibre of $x$ which is not
    below $\widehat y^c$. Among such elements, choose one, $z$, so that
    $z\wedge \widehat y^c$ is maximal in weak order, and among
    $z$ which
    accomplish this, choose $z$ minimal in weak order.
   Let $m=z\wedge \widehat y^c$. 
    By Theorem \ref{rs-th},
    $m$ is also in the $\pi^c$ fibre of $x$.
     Since $z>m$,
      there exists $z'$ such that $z\geq z'\gtrdot m$. Since
      $m$ is the meet of $z$ and $\widehat y^c$, and $z'\leq z$, we cannot
      have $z'\leq \widehat y^c$. Thus if $z>z'$ we would have preferred to
      choose $z'$ instead of $z$. We conclude that $z=z'$, so
      $z \gtrdot m$.
    
    Now choose $p$
    a cover of $m$ which lies below
    $\widehat y^c$. If $p$ and $z$ had a join, it would be either in the
    fibre of $x$ or of $y$. In the former case, we should have chosen that
    join rather than $z$. But the latter case violates the hypothesis that
    $\widehat y^{c}$ is the maximum element of its fibre. So they have no
    join,
    and thus there is a copy of the infinite dihedral group starting
    at $m$ and going up through $z$ and $p$. This gives us an infinite chain
    above $p$. By Lemma \ref{infinite-lemma},
    all the elements of this dihedral group are in the fibres of $x$ or $y$.
    Consider $q$, the largest element on the chain up from $p$ which is
    below $\widehat y^{c}$. If $q$ is in the same fibre as $x$, then we should
    have chosen the next element up the chain from $q$ instead of $z$. If
    $q$ is in the same fibre as $y$, then the next element up the chain
    from $q$ is also in the same fibre as $y$, but this contradicts the
    hypothesis that $\widehat y^c$ is the maximum element of its fibre.

    We have reached a contradiction, so there must not be any element of the
    fibre of $x$ which is {not} below $\widehat y^c$. The elements of the fibre of
    $x$ therefore have an upper bound, so they have a join, and, by Lemma \ref{fibre-lemma}, that join is also in the fibre of $x$. So there is a
    top element of the fibre, $\widehat x^c$. Since all the elements of the
    fibre are below $\widehat y^c$, this is in particular true of $\widehat x^c$. \end{proof}

  \begin{proof}[Proof of Lemma \ref{max is max}] Suppose that $x$ is maximal
    in the fibre over $y$, but not maximum. It follows from
    Lemma \ref{lemmatwo} that the fibre is infinite. We know the join of
    two elements of the fibre, if it exists, is again in the fibre. So
    the join of $x$ with any element of the fibre not below $x$ does not exist.
    Find such a $p$ for which the meet with $x$ is as large as possible, and
    then choose {such a} $p$ as small as possible with that meet.
    This means we have an element
    $p$ with $p$ covering $x\wedge p$, and all the elements {$z$} satisfying
    $x\wedge p<z\leq x$ do not admit any covers which are not $\leq x$.  Let $r$ be a
    cover of $x\wedge p$ {such that $r\leq x$}. The join of $r$ and $p$ must not exist. Therefore
    there is a copy of the infinite dihedral group starting at
    $p\wedge r$, and by Lemma \ref{infinite-lemma}, all its elements are in
    the same $\pi^c$ fibre. Let $q$ be the largest element of the chain through $r$ which
    is below $x$, and let $v$ be the next element up the chain. If $q<x$, then
    we would have preferred $v$ to $p$. But if $q=x$, then $v>x$ contradicts
    that $x$ is maximal in the fibre. We have found a contradiction.
    \end{proof}


\section{Cofinite torsion classes and bounded $c$-sortable elements}

Next we study a relationship between cofinite torsion classes and $c$-sortable elements. 
First we give the following easy lemma.

\begin{lemm}\label{existence sortable}
Let $w\in W$.
If $\overline{\C I_w}$ is a torsion class, then it is functorially finite.
In particular, there exists an $\Ext$-projective $KQ$-module $T$ of $\overline{\C I_w}$ such that 
$\Fac T=\overline{\C I_w}$.
\end{lemm}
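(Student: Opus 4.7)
The plan is to produce a support tilting $KQ$-module $T$ with $\Fac T = \overline{\C I_w}$ by applying the bijection of Theorem \ref{bij support tilt}. The main leverage will come from cofiniteness: I will argue that the torsion-free partner of $\overline{\C I_w}$ contains only finitely many indecomposables, which in turn will force the whole torsion pair to be functorially finite.

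First I would set $\FF := \overline{\C I_w}$ and form the torsion pair $(\FF, \FF^{\perp})$ in $\mod KQ$, where $\FF^{\perp} := \{X \in \mod KQ \mid \Hom_{KQ}(\FF, X) = 0\}$; such a torsion pair exists in the finite-length category $\mod KQ$ because, by hypothesis, $\FF$ is both quotient-closed and extension-closed. By \cite[Proposition 2.2]{ORT}, any cofinite quotient-closed subcategory of $\mod KQ$ contains every non-preprojective indecomposable, so the finitely many indecomposables missing from $\FF$ are all preprojective; I will label them $M_1, \dots, M_k$. Next, if $X$ is a nonzero indecomposable lying in $\FF^{\perp}$ and $X$ also happens to lie in $\FF$, then specializing the defining condition to $F = X$ would force $\Hom_{KQ}(X, X) = 0$, which is absurd. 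Hence $X \notin \FF$, so $X \in \{M_1, \dots, M_k\}$, and therefore $\FF^{\perp} \subseteq \add(M_1 \oplus \cdots \oplus M_k)$ has only finitely many indecomposables.

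Once finiteness of $\FF^{\perp}$ is in hand, functorial finiteness follows formally. A subcategory of $\mod KQ$ with only finitely many indecomposables is automatically functorially finite, so $\FF^{\perp}$ is a functorially finite torsion-free class. For any torsion pair in a finite-length abelian category, the canonical torsion decomposition of any module already supplies right $\FF$-approximations and left $\FF^{\perp}$-approximations, so functorial finiteness of $\FF^{\perp}$ upgrades to functorial finiteness of $\FF$ by a standard approximation argument. Therefore $\FF$ is a functorially finite torsion class of $\mod KQ$, and Theorem \ref{bij support tilt} produces the required support tilting $KQ$-module $T = P(\FF)$ with $\Fac T = \FF$.

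The only genuine step is the identification of $\FF^{\perp}$ as finite, and this rests entirely on the preprojectivity of the missing indecomposables, which is already recorded in \cite[Proposition 2.2]{ORT}. In the Dynkin case the statement is immediate since $\mod KQ$ itself has only finitely many indecomposables; in the non-Dynkin case the argument above is uniform. I do not expect any further obstacle; the passage from finiteness of $\FF^{\perp}$ to functorial finiteness of the torsion pair is a general fact about torsion pairs that I would invoke rather than reprove.
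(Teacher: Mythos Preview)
Your proposal is correct and follows essentially the same approach as the paper: cofiniteness of $\overline{\C I_w}$ forces its torsion-free partner to have only finitely many indecomposables, hence to be functorially finite, and then Smal{\o}'s result (the paper's reference \cite{S}, also \cite[Proposition~1.1]{AIR}) transfers functorial finiteness to the torsion class so that Theorem~\ref{bij support tilt} applies. Your argument that any nonzero indecomposable $X$ in the torsion-free partner must lie outside the torsion class (since $\Hom(X,X)\neq 0$) is exactly the reason the paper's one-line claim ``the corresponding torsion free class is finite'' holds; note that the preprojectivity of the missing modules, which you record via \cite[Proposition~2.2]{ORT}, is not actually needed for this step---cofiniteness alone suffices.
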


\begin{proof}
Since $\overline{\C I_w}$ is cofinite, the corresponding torsion free class is finite and, in particular, it is functorially finite. 
Hence so is $\overline{\C I_w}$ by \cite{S} (see also \cite[Proposition 1.1]{AIR}).
Thus, the statement follows from Theorem \ref{bij support tilt}.
\end{proof}

We recall the following result from \cite[Lemma 3.1]{ORT} (and the sentence before it).

\begin{lemm}\cite[Lemma 3.1]{ORT}\label{ORT relative} For  finite dimensional $\La$-modules $M$ and $N$, 
there is a surjective morphism 
$$\Hom_{KQ}(\tau^-(M_{KQ}),N_{KQ})\to\underline{\Ext}_\Lambda^1(M,N),$$
 where $\underline{\Ext}_\Lambda^1(-,-)$ denotes the subfunctor of ${\Ext}_\Lambda^1(-,-)$ given by $KQ$-split short exact sequences.
\end{lemm}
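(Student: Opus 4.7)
My plan is to parametrize the $KQ$-split extensions of $M$ by $N$ in $\mod\La$ directly, using the tensor algebra description of $\La$ over $KQ$. Since $\La$ is the quotient of $K Q^{d}$ by the mesh relations, and since $KQ$ sits in $\La$ as a subalgebra, a $\La$-module structure on a $KQ$-module $X$ amounts to specifying how the opposite arrows $a^{\ast}$ act; packaging these into a single $KQ$-linear map (via the standard identification of $\Ext^{1}_{KQ}(D(KQ), KQ)\otimes_{KQ}X$ with $\tau^{-}X$, up to the mesh relations), one obtains a canonical structure map $\phi_{X}\colon \tau^{-}(X_{KQ})\to X_{KQ}$ that determines, and is determined by, the $\La$-module structure on $X_{KQ}$. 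This ``tensor algebra / mesh'' viewpoint is classical (Baer--Geigle--Lenzing, Ringel, Crawley-Boevey); I would invoke it as the starting point.

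Given a $KQ$-split short exact sequence $0\to N\to E\to M\to 0$ of $\La$-modules, I would fix a $KQ$-splitting to identify $E_{KQ}\cong N_{KQ}\oplus M_{KQ}$. Since $\tau^{-}$ is additive on $KQ$-modules, the structure map $\phi_{E}$ becomes a $2\times 2$ matrix
\[
\phi_{E}=\begin{pmatrix}\phi_{N}&f\\ 0 &\phi_{M}\end{pmatrix}\colon \tau^{-}(N_{KQ})\oplus\tau^{-}(M_{KQ})\longrightarrow N_{KQ}\oplus M_{KQ},
\]
the off-diagonal $(2,1)$-entry being forced to vanish by $\La$-linearity of $N\hookrightarrow E$, and the diagonal entries being forced to equal $\phi_{N}$ and $\phi_{M}$ by $\La$-linearity of $N\hookrightarrow E$ and $E\twoheadrightarrow M$. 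The only free datum is $f\in\Hom_{KQ}(\tau^{-}(M_{KQ}),N_{KQ})$.

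This produces an assignment $f\mapsto [E]$; I would then check that changing the $KQ$-splitting by $g\in\Hom_{KQ}(M_{KQ},N_{KQ})$ replaces $f$ by $f+\phi_{N}\circ\tau^{-}(g)-g\circ\phi_{M}$, so that the assignment factors through a well-defined map $\Hom_{KQ}(\tau^{-}(M_{KQ}),N_{KQ})\to\underline{\Ext}_{\La}^{1}(M,N)$. Surjectivity is immediate from the construction, since every $KQ$-split extension admits such a splitting and is therefore in the image of some $f$.

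The main obstacle I expect is in the very first step: justifying rigorously the equivalence between $\La$-modules and pairs $(X,\phi_{X}\colon \tau^{-}X\to X)$ satisfying the mesh condition, which requires unwinding the tensor-algebra presentation of $\La$ over $KQ$ and handling the mesh relations carefully (in particular avoiding issues with projective summands of $X_{KQ}$, where $\tau^{-}$ must be interpreted in the stable category). Once this identification is in hand, the remainder of the argument is essentially matrix bookkeeping and presents no real difficulty. An alternative route would be to apply $\Hom_{\La}(-,N)$ to the canonical presentation $0\to \Omega_{KQ}M\to \La\otimes_{KQ}M_{KQ}\to M\to 0$ coming from the adjunction between restriction and induction, and identify the relative $\Ext^{1}$ with the claimed $\Hom$-group via the same mesh computation; I would fall back on this if the direct approach above becomes unwieldy.
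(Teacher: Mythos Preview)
The paper does not supply a proof of this lemma; it is simply quoted from \cite[Lemma 3.1]{ORT}, so there is no in-paper argument to compare against. Your approach via the Ringel/Baer--Geigle--Lenzing description of $\Lambda$-modules as pairs $(X,\phi_X\colon\tau^-X_{KQ}\to X_{KQ})$ is a correct way to establish the result and is in the same spirit as the argument in the cited reference.

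One point deserves to be made explicit. For surjectivity you need the forward assignment $f\mapsto[E]$ to be defined on \emph{all} of $\Hom_{KQ}(\tau^-(M_{KQ}),N_{KQ})$, i.e.\ that every such $f$ yields a triangular $\phi_E$ satisfying the preprojective relation. If one starts instead from the naive data (an action of each opposite arrow $a^*$, giving an element of $\bigoplus_a\Hom_K(M_{t(a)},N_{s(a)})$), then the off-diagonal block of the relation $\sum(aa^*-a^*a)=0$ imposes a genuine linear constraint; it is precisely this constraint that is absorbed into the identification with $\Hom_{KQ}(\tau^-M,N)$. In other words, the content of the first step you flag as the ``main obstacle'' is exactly that Ringel's equivalence already encodes the mesh relation, so that no residual condition on $f$ remains. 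Once this is on record, the matrix bookkeeping (triangularity from $\Lambda$-linearity of the inclusion and projection, and your change-of-splitting formula $f\mapsto f+\phi_N\circ\tau^-(g)-g\circ\phi_M$) goes through as you describe, and surjectivity is immediate since every $KQ$-split extension produces such an $f$.
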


Then we have the following proposition.

\begin{prop}\label{C-facI}
Let $w\in W$.
If $\overline{\C I_w}$ is a torsion class, 
then we have 
$$\Fac I_w\cap\mod KQ=\overline{\C I_w}.$$
\end{prop}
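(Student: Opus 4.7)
The plan is to prove the two inclusions separately.

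For the inclusion $\Fac I_w \cap \mod KQ \subseteq \overline{\C I_w}$, I would first observe that every $\Lambda$-linear surjection from a power of $I_w$ to a $KQ$-module (an object with trivial $a^*$-action) must factor through the quotient $I_w^n \twoheadrightarrow I_w^n / I_w^n J$, where $J \subset \Lambda$ denotes the two-sided ideal generated by $\{a^* : a \in Q_1\}$. Consequently, $\Fac I_w \cap \mod KQ$ coincides with the torsion class in $\mod KQ$ generated by $I_w/I_w J = I_w \otimes_\Lambda KQ$. Since $(I_w)_{KQ}$ lies tautologically in $\C I_w \subseteq \overline{\C I_w}$, and $I_w/I_w J$ is a $KQ$-linear quotient of $(I_w)_{KQ}$, the hypothesis that $\overline{\C I_w}$ is a torsion class (and hence closed under quotients and extensions) yields $I_w/I_w J \in \overline{\C I_w}$, and then the whole torsion class it generates is contained in $\overline{\C I_w}$.

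For the reverse inclusion $\overline{\C I_w} \subseteq \Fac I_w \cap \mod KQ$, I would apply Lemma \ref{existence sortable} to produce a support tilting module $T \in \mod KQ$ with $\Fac T = \overline{\C I_w}$, and which is Ext-projective inside $\overline{\C I_w}$. Since $\Fac I_w \cap \mod KQ$ is a torsion class in $\mod KQ$ by Corollary \ref{torsionpath}, it suffices to show $T \in \Fac I_w$, whereupon closure under quotients in $\mod KQ$ upgrades this to the inclusion $\Fac T = \overline{\C I_w} \subseteq \Fac I_w \cap \mod KQ$. The non-preprojective indecomposable summands of $T$ lie in $\Fac I_w \cap \mod KQ$ automatically, because by Theorem \ref{main1}(b) this category equals ${}^{\perp}(\C\Lambda_{\pi^c(w)})$ and $\C\Lambda_{\pi^c(w)}$ consists entirely of preprojective modules (being a finite torsion-free class in $\mod KQ$). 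So the remaining work is for preprojective summands.

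For these, the natural tool is Lemma \ref{ORT relative}: applied with $M = T$, the surjection $\Hom_{KQ}(\tau^{-}T, N_{KQ}) \twoheadrightarrow \underline{\Ext}^1_\Lambda(T, N)$ lets one translate questions about $\Ext^1_\Lambda(T, -)$ on modules in $\Sub\Lambda_w$ into Hom vanishing in $\mod KQ$. Using the Ext-projectivity of $T$ in $\overline{\C I_w} = \Fac T$ together with the filtration of $N \in \Sub\Lambda_w$ by layers (Theorem \ref{subobj}) and the fact that the $KQ$-restricted layers belong to $\C\Lambda_{\pi^c(w)}$, one obtains the vanishing required to place $T$ in the torsion class $\Fac I_w$ via the torsion pair $(\Fac I_w, \Sub\Lambda_w)$.

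The main obstacle is this last step: converting Ext-projectivity of $T$ inside $\overline{\C I_w}$ into membership in $\Fac I_w$ as a $\Lambda$-module. Lemma \ref{ORT relative} only controls $\underline{\Ext}^1_\Lambda$ (the $KQ$-split part), and the passage between $\Lambda$- and $KQ$-module structures on $T$ (via the identity $\Hom_\Lambda(T, Y) = \Hom_{KQ}(T, Y^J)$ for $Y^J$ the maximal $KQ$-submodule of $Y$) must be combined with Theorem \ref{main1}(b) carefully. I expect this is where the bulk of the work lies.
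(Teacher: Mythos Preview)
Your treatment of the easy inclusion $\Fac I_w\cap\mod KQ\subseteq\overline{\C I_w}$ is fine, though the paper simply records it as obvious.

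For the reverse inclusion, there are two problems.

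First, your handling of non-preprojective summands of $T$ is circular. You invoke Theorem \ref{main1}(b) to write $\Fac I_w\cap\mod KQ={}^{\perp}(\C\La_{\pi^c(w)})$ and then assert that $\C\La_{\pi^c(w)}$ consists of preprojective modules. But by Lemma \ref{c-bounded} that is equivalent to $\pi^c(w)$ being bounded $c$-sortable, which in the paper is only established \emph{after} this proposition (in Theorem \ref{bound property}(a), via Theorem \ref{torsion=pi}, which uses the present result). In fact this case never arises: if $P$ were a non-preprojective Ext-projective of $\overline{\C I_w}$, then $P$ is non-projective, so $\tau P\neq 0$ is again non-preprojective, hence lies in $\overline{\C I_w}$, and the AR-sequence gives $\Ext^1_{KQ}(P,\tau P)\neq 0$, a contradiction. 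The paper uses exactly this observation to conclude that every indecomposable summand $P$ of $T$ is preprojective, and hence lies in $\C I_w$ itself.

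Second, and more seriously, your plan for the preprojective summands does not reach the goal. To place $P$ in $\Fac I_w$ via the torsion pair $(\Fac I_w,\Sub\La_w)$ you would need $\Hom_\La(P,N)=0$ for all $N\in\Sub\La_w$; applying Lemma \ref{ORT relative} with $M=T$ only controls $\underline{\Ext}^1_\La(T,N)$, which is the wrong functor and, as you note, only its $KQ$-split part. The paper's idea is different and avoids this obstacle entirely: since $P\in\C I_w$ is preprojective, one realizes $P$ as a $\La$-submodule of a finite-dimensional quotient $E$ of $I_w$ (by cutting off at $I_{c^N}$ for large $N$), obtaining a $KQ$-split short exact sequence $0\to P\to E\to M\to 0$ with $E,M\in\Fac I_w$. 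Now Lemma \ref{ORT relative} is applied with this $M$ (not with $T$): it gives a surjection $D\Ext^1_{KQ}(P,M_{KQ})\cong\Hom_{KQ}(\tau^-(M_{KQ}),P)\twoheadrightarrow\underline{\Ext}^1_\La(M,P)$, and the left side vanishes because $P$ is Ext-projective in $\overline{\C I_w}$ and $M_{KQ}\in\C I_w$. Hence the sequence $\La$-splits and $P\in\Fac I_w$. The crucial missing step in your outline is this construction of the embedding $P\hookrightarrow E$ with $E\in\Fac I_w$; once you have it, Lemma \ref{ORT relative} is used to split that specific sequence rather than to prove a blanket vanishing against $\Sub\La_w$.
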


\begin{proof}
Clearly we have $\Fac I_w\cap\mod KQ\subset\overline{\C I_w}.$
We will show the converse. 

By Lemma \ref{existence sortable}, 
there exists Ext-projective $T$ in $\overline{\C I_w}$ such that $\Fac T=\overline{\C I_w}$. 
Let $P$ be an indecomposable direct summand of $T$. 
It is enough to show that $P$ is contained in $\Fac I_w\cap\mod KQ$.  

Since $\overline{\C I_w}$ is cofinite and contains all non-preprojective $KQ$-modules, 
$P$ is preprojective and hence $P$ is in $\C I_w$. 
Therefore, by taking large $N$, $P$ is a subquotient of the finite dimensional module $I_w/I_{c^N}$. 
It is therefore a submodule of a finite dimensional
quotient of $I_w$. Hence we have a $KQ$-split exact sequence of finite dimensional modules 
$$0\rightarrow P \rightarrow E \rightarrow M\rightarrow 0$$
with $E,M$ in $\Fac I_w$.

By Lemma \ref{ORT relative}, 
$\underline{\Ext}_\Lambda^1(M,P)$ is a quotient of $\Hom_{KQ}(\tau^-(M_{KQ}),P)$, 
which is isomorphic to $D\Ext^1_{KQ}(P,M_{KQ})$. 
Since $M_{KQ}$ is in ${\C I_w}$, we have $\Ext^1_{KQ}(P,M_{KQ})=0$ 
because $P$ is Ext-projective in $\overline{\C I_w}$.  
Thus, $P$ is itself a quotient of $I_w$ and hence $P$ is in $\Fac I_w\cap\mod KQ$. 
\end{proof}

As a consequence, we have the following result, which shows \cite[Conjecture 11.2]{ORT}.

\begin{thm}\label{torsion=pi}
Let $w\in W$.
If $\overline{\C I_w}$ is a torsion class, then the corresponding torsion 
free class $(\overline{\C I_w})^{\perp}$ is given by $\C \La_{\pi^c(w)}$.
\end{thm}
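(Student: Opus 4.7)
The proof is essentially a synthesis of three results already established in the paper, so the plan is to assemble them in the right order rather than do new work.

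First I would invoke Corollary \ref{torsionpath}, which tells us that for any $w \in W$ the pair
$$(\Fac I_w \cap \mod KQ,\ \Sub \La_w \cap \mod KQ)$$
is a torsion pair of $\mod KQ$. In particular, the torsion free class corresponding to the torsion class $\Fac I_w \cap \mod KQ$ is uniquely determined as $\Sub \La_w \cap \mod KQ$.

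Next I would use the hypothesis that $\overline{\C I_w}$ is a torsion class together with Proposition \ref{C-facI} to rewrite the torsion class side as
$$\Fac I_w \cap \mod KQ = \overline{\C I_w}.$$
This identifies the torsion pair above as $(\overline{\C I_w},\ \Sub \La_w \cap \mod KQ)$, so by uniqueness of the torsion free class in a torsion pair we obtain $(\overline{\C I_w})^{\perp} = \Sub \La_w \cap \mod KQ$.

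Finally, I would apply Theorem \ref{main1}(b) to rewrite the right-hand side: for any $w \in W$ we have
$$\Sub \La_w \cap \mod KQ = \C \La_{\pi^c(w)}.$$
Combining the last two equalities yields $(\overline{\C I_w})^{\perp} = \C \La_{\pi^c(w)}$, as required. There is really no main obstacle here; all the substantive work has been done in Proposition \ref{C-facI} (which controls the torsion class side under the hypothesis) and Theorem \ref{main1}(b) (which identifies the torsion free class side with a sortable-element description). The proof is just the composition of these identifications inside the torsion pair of Corollary \ref{torsionpath}.
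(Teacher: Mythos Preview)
Your proof is correct and follows exactly the same approach as the paper: invoke Corollary~\ref{torsionpath} for the torsion pair, use Proposition~\ref{C-facI} to identify the torsion class with $\overline{\C I_w}$, and then apply Theorem~\ref{main1}(b) to identify the torsion free class. There is nothing to add.
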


\begin{proof}
Corollary \ref{torsionpath} implies that $(\Fac I_w\cap\mod KQ,\Sub\La_w\cap\mod KQ)$ is a torsion pair of $\mod KQ$. 
On the other hand, by Proposition \ref{C-facI}, we have $\Fac I_w\cap\mod KQ=\overline{\C I_w}$. 
Thus Theorem \ref{main1} (b) shows the assertion.
\end{proof}

Next we give the following definition.

\begin{defi}\label{def well}
{If $Q$ is a non-Dynkin quiver, then} 
a $c$-sortable element $x$ is called \emph{bounded $c$-sortable}
if there exists a positive integer $N$ such that $x\leq c^N$.
{If $Q$ is a Dynkin quiver, then} we regard any $c$-sortable element as bounded $c$-sortable. 
We denote by $\wcsort W$ the set of bounded $c$-sortable elements. 
\end{defi}

\begin{exam}\label{exam bounded}
\begin{itemize}
\item[(a)] Let $Q$ be the following quiver 
$$\xymatrix@C10pt@R5pt{
1\ar[r]&2\ar@{=>}[r]&3.}$$
Because 
\begin{eqnarray*}
c^3&=&s_1s_2s_3s_1s_2s_3s_1s_2s_3\\
&=&s_1s_2s_3s_1s_2s_1s_3s_2s_3\\
&=&s_1s_2s_3s_2s_1s_2s_3s_2s_3,
\end{eqnarray*}
we have $s_1s_2s_3s_2\leq c^3$ and hence $s_1s_2s_3s_2$ is bounded $c$-sortable.
\item[(b)]Let $Q$ be the following quiver 
$$\xymatrix@C10pt@R5pt{&2\ar[rd]&\\
1\ar[ru]\ar[rr]&&3.}$$
Then one can check that $s_1s_2s_3s_2$ is not bounded $c$-sortable. 
\end{itemize}
\end{exam}

Then we give the following lemma.

\begin{lemm}\label{c-bounded}
Let $x$ be a $c$-sortable element. Then the following are equivalent.
\begin{itemize}
\item[(i)]$x$ is bounded $c$-sortable. 
\item[(ii)]Any module of $\C\La_x$ is a preprojective module.
\item[(iii)]The corresponding torsion class ${}^{\perp}(\C \La_x)$ is cofinite.
\end{itemize}
\end{lemm}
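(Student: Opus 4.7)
The plan is to dispatch the Dynkin case first (every $c$-sortable element is declared bounded, every $KQ$-module is preprojective, and $\mod KQ$ has only finitely many indecomposables, so all three conditions hold trivially) and then to establish (a)~$\Leftrightarrow$~(b) and (b)~$\Leftrightarrow$~(c) in the non-Dynkin case.

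For (a)~$\Leftrightarrow$~(b), I would first record that $c^N$ is $c$-sortable with reduced expression $\underline{c}^N$, and that applying Theorem~\ref{equality}(c) to this word, together with the identification in Theorem~\ref{ORT-thm} of positions in $\underline{c}^\infty$ with the indecomposable preprojective $KQ$-modules, yields $\C\La_{c^N}=\add\{P_1,\ldots,P_n,\tau^-P_1,\ldots,\tau^{-(N-1)}P_n\}$, the first $Nn$ indecomposable preprojectives. Proposition~\ref{pathorders} then rephrases $x\leq c^N$ as $\C\La_x\subset\C\La_{c^N}$, and since $\C\La_x$ has only finitely many indecomposables (Corollary~\ref{sortable bij}), it lies inside the preprojective modules exactly when it lies inside some $\C\La_{c^N}$, giving the equivalence.

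For (b)~$\Leftrightarrow$~(c), I would exploit that $({}^\perp(\C\La_x),\C\La_x)$ is a torsion pair of $\mod KQ$ (Corollary~\ref{torsionpath}), so an indecomposable $M$ lies outside ${}^\perp(\C\La_x)$ precisely when $\Hom_{KQ}(M,N)\neq 0$ for some $N\in\C\La_x$. For (b)~$\Rightarrow$~(c), write $\C\La_x=\add\{N_1,\ldots,N_r\}$ with each $N_i$ preprojective; then for each $N_i=\tau^{-k_i}P_{j_i}$ the set of indecomposable $M$ with $\Hom(M,N_i)\neq 0$ is finite by the standard finiteness property of the preprojective component of a hereditary algebra, so the union over $i$ is finite and ${}^\perp(\C\La_x)$ is cofinite. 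For (c)~$\Rightarrow$~(b), I would argue by contrapositive: if some $N\in\C\La_x$ is non-preprojective, then $\tau^k N$ is a non-zero indecomposable for every $k\geq 0$, and the hereditary Auslander-Reiten adjunction gives $\Hom(\tau^{-k}P_i,N)\cong\Hom(P_i,\tau^k N)$, which has dimension equal to the $i$-th component of $\tau^k N$; for each $k$ some vertex $i$ makes this non-zero, producing infinitely many distinct preprojective indecomposables outside ${}^\perp(\C\La_x)$ and contradicting cofiniteness.

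The main obstacle will be the argument in (c)~$\Rightarrow$~(b): I must check the hypotheses for the $\tau$-adjunction, namely that $\tau^{-k}P_i$ is well-defined and non-injective (automatic in the non-Dynkin case, where the preprojective and preinjective components are disjoint) and that a non-preprojective indecomposable $N$ stays non-projective under all iterates $\tau^k$, so that $\tau^k N$ remains a nonzero indecomposable for every $k$. The other point needing care, the explicit description of $\C\La_{c^N}$, is essentially bookkeeping, threading the preprojective enumeration in Theorem~\ref{ORT-thm} through the layer description in Theorem~\ref{equality}.
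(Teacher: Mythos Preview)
Your proposal is correct and follows essentially the same route as the paper: dismiss the Dynkin case, use the explicit description of $\C\La_{c^N}$ together with Proposition~\ref{pathorders} for (a)~$\Leftrightarrow$~(b), and the structure of the AR quiver for (b)~$\Leftrightarrow$~(c). The paper simply declares (b)~$\Leftrightarrow$~(c) ``straightforward from the structure of the AR quiver,'' whereas you spell out the $\tau$-adjunction argument; your added care about $\tau^k N$ remaining nonzero for non-preprojective $N$ is exactly what is needed to justify that step.
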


\begin{proof}
It is enough to consider the non-Dynkin case. 

Then we have $\C \La_{c^N}=\add\{KQ,\tau^{-1}(KQ),\ldots,\tau^{-N}(KQ)\}$ by Theorem \ref{equality} (c). 
On the other hand, by Proposition \ref{pathorders}, we have $x\leq c^N$ if and only if $\C \La_x\subset\C\La_{c^N}$. 
Thus it implies the the equivalence of (i) and (ii).
The equivalence of (ii) and (iii) is straightforward from the structure of the AR quiver.
\end{proof}

Moreover, it is convenient to introduce the following terminology.

\begin{defi}\label{def hat}
Let $x$ be a $c$-sortable element. If there exists a maximum element amongst $w\in W$ satisfying $\pi^c(w)=x$, then we denote it by $\widehat{x}^c=\widehat{x}$ and call it \emph{$c$-antisortable}. We denote by $\comp W$ the set of $c$-antisortable elements of $W$.
\end{defi}


\begin{exam}\label{exam hat}
\begin{itemize}
\item[(a)]
Let $Q$ be the following quiver 
$$\xymatrix@C10pt@R5pt{
1\ar[r]&2\ar@{=>}[r]&3.}$$
Take a $c$-sortable element $x=s_1s_2s_3s_2$. 
Then one can check that  $\widehat{x}=s_1s_2s_3s_2s_1$.
\item[(b)]
Let $Q$ be the following quiver 
$$\xymatrix@C10pt@R5pt{&2\ar[rd]&\\
1\ar[ru]\ar[rr]&&3.}$$
Take a $c$-sortable element $x=s_1s_2s_3s_2$.
Consider the following infinite word 
$$s_1s_2s_3s_2s_1s_3s_2s_1s_3s_2s_1s_3s_2\cdots.$$ 
Then from the word, any arbitrarily long prefix  $w$ will satisfy 
$\pi^c(w)=x$. 
Thus, $\widehat{x}$ does not exist.
\end{itemize}
\end{exam}

We call a torsion pair \emph{cofinite} if the torsion class is cofinite.
Then we give a description of $w$ when $\overline{\C I_w}$ is a torsion class as follows.

\begin{thm}\label{bound property}
\begin{itemize}
\item[(a)]Let $w\in W$.  
If $\overline{\C I_w}$ is a torsion class, 
then $\pi^c(w)$ is bounded $c$-sortable and $w=\widehat{\pi^c(w)}$.
\item[(b)]Let $x$ be a $c$-sortable element. 
If $x$ is bounded $c$-sortable, then there exists $\widehat{x}$ and $\overline{\C I_{\widehat{x}}}$ is a torsion class. 
\item[(c)] 
There is a bijection $$\wcsort W \longrightarrow \{\textnormal{cofinite torsion pairs of }\mod KQ\},\ \ \ x\mapsto (\overline{\C I_{\widehat{x}}},\C \La_x).$$
\end{itemize}
\end{thm}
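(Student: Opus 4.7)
The plan is to establish (b) first; parts (a) and (c) will then follow. For (b), the central construction, given bounded $c$-sortable $x$, is to produce an element $u\in W$ via the ORT bijection and then identify it with $\widehat x$. By Lemma \ref{c-bounded}, $\C\La_x$ is a finite torsion free class, hence ${}^\perp\C\La_x$ is a cofinite torsion class in $\mod KQ$. Applying Theorem \ref{ORT-thm} to this cofinite quotient closed subcategory yields a unique $u\in W$ with $\overline{\C I_u}={}^\perp\C\La_x$, which is therefore a torsion class. Combining Proposition \ref{C-facI}, Corollary \ref{torsionpath}, Theorem \ref{main1}(b), and Corollary \ref{sortable bij} then forces $\pi^c(u)=x$, placing $u$ in the fiber $(\pi^c)^{-1}(x)$.

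For any $v$ in this fiber, Theorem \ref{main1}(b) combined with Corollary \ref{torsionpath} gives $\Fac I_v\cap\mod KQ={}^\perp\C\La_x=\overline{\C I_u}$. Moreover, since any $KQ$-module that is a quotient of $I_v^n$ in $\mod\La$ factors through the largest $KQ$-quotient $I_v/I_v I_c$, whose $KQ$-summands lie in $\C I_v$, the quotient closedness of $\overline{\C I_v}$ from Theorem \ref{ORT-thm} yields $\Fac I_v\cap\mod KQ\subseteq\overline{\C I_v}$, so $\overline{\C I_u}\subseteq\overline{\C I_v}$ for every $v$ in the fiber. Under the leftmost-word description of Theorem \ref{ORT-thm}, this is equivalent to the position set of $\underline v$ in $\underline c^\infty$ being contained in that of $\underline u$. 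Now, if some fiber element $v$ satisfied $v>u$ strictly in the weak order, a greedy-construction argument on the leftmost words (inducting on $\ell(v)-\ell(u)$) would give the reverse containment, and hence $\overline{\C I_v}\subseteq\overline{\C I_u}$; combined with the first containment this would force $u=v$ via Theorem \ref{ORT-thm}, contradicting $v>u$. Therefore $u$ is maximal in the fiber and, by Lemma \ref{max is max}, is its maximum, so $\widehat x$ exists and equals $u$. Then $\overline{\C I_{\widehat x}}=\overline{\C I_u}={}^\perp\C\La_x$ is a torsion class, proving (b). The main obstacle here is the combinatorial step that $u\leq v$ in the weak order implies inclusion of leftmost-word positions; this requires carefully exploiting the greedy structure underlying Theorem \ref{ORT-thm}, and is the core technical point of the argument.

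With (b) in hand, (a) follows: if $\overline{\C I_w}$ is a torsion class, then Proposition \ref{C-facI}, Corollary \ref{torsionpath}, Theorem \ref{main1}(b), and Lemma \ref{c-bounded} together yield that $\pi^c(w)$ is bounded, and then both $\overline{\C I_w}$ and $\overline{\C I_{\widehat{\pi^c(w)}}}$ (the latter being a torsion class by (b)) are cofinite torsion classes with common torsion free class $\C\La_{\pi^c(w)}$ by Theorem \ref{torsion=pi}, so they coincide and Theorem \ref{ORT-thm} gives $w=\widehat{\pi^c(w)}$. Finally (c) is immediate: the map $x\mapsto(\overline{\C I_{\widehat x}},\C\La_x)$ is well-defined by (b), injective because $\C\La_x$ determines $x$ (Corollary \ref{sortable bij}), and surjective since any cofinite torsion pair has finite torsion free class $\C\La_x$ for a unique bounded $c$-sortable $x$ (via Lemma \ref{c-bounded}), with matching torsion class $\overline{\C I_{\widehat x}}$ furnished by (b).
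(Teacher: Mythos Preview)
Your overall plan is sound, and once (b) is established your derivations of (a) and (c) are clean and correct. The paper argues in the opposite order --- proving (a) first and deducing (b) from it --- but this is inessential: in both approaches the heart of the matter is showing that the element $u$ with $\overline{\C I_u}={}^\perp(\C\La_x)$ is the maximum of the fibre $(\pi^c)^{-1}(x)$.

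Your treatment of this key step, however, introduces an unnecessary obstacle. You correctly establish that for every $v$ in the fibre one has $\overline{\C I_u}\subseteq\overline{\C I_v}$, hence $P_v\subseteq P_u$ in terms of leftmost-word position sets. But then, to rule out $v>u$, you invoke a further claim --- that $u\leq v$ in weak order forces $P_u\subseteq P_v$ --- which you do not prove and which you yourself flag as ``the core technical point.'' As written this is a genuine gap.

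The gap is entirely avoidable, and the paper's argument shows how. From $P_v\subseteq P_u$ you already have $\ell(v)=|P_v|\leq|P_u|=\ell(u)$ for every $v$ in the fibre, since by Theorem~\ref{ORT-thm} the leftmost word for $w$ has exactly $\ell(w)$ letters. Hence the fibre has bounded length and is finite, so Lemma~\ref{lemmatwo} supplies a maximum $\widehat{x}$. Since $u$ lies in the fibre, $u\leq\widehat{x}$, whence $\ell(u)\leq\ell(\widehat{x})\leq\ell(u)$ and therefore $u=\widehat{x}$. This two-line length count together with Lemma~\ref{lemmatwo} replaces both your unproven greedy lemma and the appeal to Lemma~\ref{max is max}; it is exactly the route taken in the paper's proof of part (a).
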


\begin{proof}
(a) Assume that  $\overline{\C I_w}$ is a torsion class. 
By Theorem \ref{torsion=pi}, 
we have $\overline{\C I_w}={}^{\perp}(\C \La_{\pi^c(w)})$. 
Then, Lemma \ref{c-bounded} implies that $\pi^c(w)$ is bounded $c$-sortable.

Assume that there exists $u\in W$ with $\ell(u)>\ell(w)$ and
$\pi^c(u)=\pi^c(w)$.  Then by Theorem \ref{main1}, we have 
$$\overline{\C(I_{u})}\supset\Fac I_{u}\cap\mod KQ= {}^{\perp}(\C \La_{\pi^c(w)})=\overline{\C I_w}.$$ 
Because $\overline{\C(I_{u})}$ (respectively, $\overline{\C I_w}$) consists of the category which is obtained by removing $\ell(u)$ (respectively, $\ell(w)$) indecomposable $KQ$-modules from the preprojective modules by Theorem \ref{ORT-thm}, this is impossible. 

Thus the set of $y\in W$ with $\pi^c(y)=\pi^c(w)$ consists of element of length at most $\ell(w)$, which is a finite set.  By Lemma \ref{lemmatwo}, there exists
a maximum element of this fibre.  Since $w$ is of maximal length
in the fibre,
this maximum element must be $w$ by Lemma \ref{max is max} .

(b) By Lemma \ref{c-bounded}, 
${}^{\perp}(\C \La_x)$ is a cofinite torsion class. 
Hence we have ${}^{\perp}(\C \La_x)=\overline{\C I_w}$ for some $w\in W$ from Theorem \ref{ORT-thm}. 
Then Theorem \ref{torsion=pi} implies $\pi^c(w)=x$ and (a) shows
that $w=\widehat{x}$. 

(c) This follows from the above (a), (b), Corollary \ref{sortable bij} and \ref{torsion=pi} and Lemma \ref{c-bounded}.
\end{proof}

Thus, these torsion pairs can be controlled by bounded $c$-sortable elements and $c$-antisortable elements. 

\begin{exam}
\begin{itemize}
\item[(a)]
Let $Q=(1\to 2\to 3)$. 
Then the AR quiver of $\mod KQ$ is given by 

\[
\xymatrix@C10pt@R5pt{
 & & {\begin{smallmatrix}3\\2\\ 1\end{smallmatrix}} \ar[rd]& & \\
 & *++[F]{{\begin{smallmatrix}2\\ 1\end{smallmatrix}}}\ar[ru]  \ar[rd]& &  {\begin{smallmatrix}3\\2\end{smallmatrix}} \ar[rd] &\\
*++[o][F]{{\begin{smallmatrix}1\end{smallmatrix}}} \ar[ru]& &  *++[F]{{\begin{smallmatrix}2\end{smallmatrix}}}  \ar[ru] & &  *++[o][F]{{\begin{smallmatrix}3\end{smallmatrix}}} .}
\]

For example, we take a $c$-sortable element $x=s_1s_3$. 
Then we have the torsion free class $\C \La_x=\add\{{\begin{smallmatrix}1\end{smallmatrix}},{\begin{smallmatrix}3\end{smallmatrix}}\}$ whose modules are circled above.
The corresponding torsion class is $\add\{{\begin{smallmatrix}2\\ 1\end{smallmatrix}},{\begin{smallmatrix}2\end{smallmatrix}}\}$ whose modules are squared above.
By Theorem \ref{ORT-thm}, it is given as $\C I_w$ for $w=s_1s_3s_2s_1$. 
Then one can check that this elements gives $\widehat{x}$. 


\item[(b)]
Let $Q=(1\to 2\Rightarrow 3)$. 
Then the preprojective component of the AR quiver of $\mod KQ$ is given a translation quiver \cite{ASS} of the following form:

\[
\xymatrix@C10pt@R5pt{
 & & *++[o][F]{\bullet} \ar@{=>}[rd]& &*++[F]{\bullet}\ar@{=>}[rd] &&*++[F]{\bullet}\ar@{=>}[rd]&&*++[F]{\bullet}&\\
 & *++[o][F]{\bullet} \ar@{=>}[ru]   \ar[rd]& & \bullet\ar[rd]  \ar@{=>}[ru]  &&*++[F]{\bullet} \ar@{=>}[ru]  \ar[rd] &&*++[F]{\bullet} \ar@{=>}[ru]   &&\cdots\\
*++[o][F]{\bullet}  \ar[ru]& & *++[F]{\bullet}  \ar[ru] & & *++[o][F]{\bullet} \ar[ru]&&*++[F]{\bullet}\ar[ru]&&&}
\]

For example, we take a $c$-sortable element $x=s_1s_2s_3s_2$, which is bounded $c$-sortable.
Then $\C \La_x$ consists of the modules which are circled above.
The corresponding torsion class consists of the modules which are squared above and all the rest.
It is given as $\overline{\C I_w}$ for $w=s_1s_2s_3s_2s_1$.
Therefore our theorem implies that we have $\widehat{x}=s_1s_2s_3s_2s_1$. 
\end{itemize}
\end{exam}


\section{Maximum elements of $\pi^c$-fibres}

In the previous section, we showed that there exists $\widehat{x}^c$ for a $c$-sortable element $x$ if $x$ is bounded $c$-sortable. 
Our final aim is to show the converse.

\begin{thm}\label{general case} 
Let $x$ be a $c$-sortable element. Then the following are equivalent.
\begin{itemize}
\item[(i)]$x$ is bounded $c$-sortable.
\item[(ii)]There exists $\widehat{x}$.
\end{itemize} 
In particular, there is a bijection 
$$\xymatrix@C60pt@R30pt{
\wcsort W \ar@<0.5ex>[r]^{\widehat{(-)}}   &\comp W.\ar@<0.5ex>[l]^{\pi^c(-)}  }$$
\end{thm}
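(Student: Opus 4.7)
Theorem \ref{bound property}(b) already gives (i)$\Rightarrow$(ii), so the map $x \mapsto \widehat{x}$ is a well-defined map $\wcsort W \to \comp W$; the bijection at the end then follows formally from the implication (ii)$\Rightarrow$(i), since $\widehat{(-)}$ and $\pi^c(-)$ are mutually inverse by the very definition of $\widehat{x}$. My plan for (ii)$\Rightarrow$(i) is to induct on $\ell(x)+n$ with $n=|Q_0|$, mirroring the inductive structure used to prove Theorem \ref{main1}(b); take $s=s_1$, which is a source of $Q$. The base case $x=e$ is immediate since $e\le c^0$.

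In Case~1, where $\ell(sx)<\ell(x)$, I would use \cite[Proposition 2.29]{RS2} to transport the hypothesis: left multiplication by $s$ gives an order-preserving bijection between the $c$-fibre of $x$ and the $scs$-fibre of $sx$, so $\widehat{sx}^{scs}=s\widehat{x}^c$ exists with respect to the reoriented quiver $Q'=\mu_1(Q)$. By induction on length, $sx$ is bounded $scs$-sortable, so by Lemma \ref{c-bounded} the category $\C\La_{sx}\subseteq\mod KQ'$ consists of preprojective $KQ'$-modules. Theorem \ref{main1}(a) identifies $\C\La_x=\add\{R^-(\C\La_{sx}),S_1\}$, and since $R^-$ preserves the preprojective component of the AR quiver and $S_1=P_1$ is preprojective (vertex~$1$ being a source), $\C\La_x$ consists of preprojective $KQ$-modules. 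Lemma \ref{c-bounded} then yields that $x$ is bounded $c$-sortable.

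Case~2, where $\ell(sx)>\ell(x)$, is the delicate one. Here $x\in W_{\langle s\rangle}$ and is $sc$-sortable. Since $\pi^c$ and $\pi^{sc}$ agree on $W_{\langle s\rangle}$, the $sc$-fibre of $x$ is contained in its $c$-fibre in $W$; Lemma \ref{lemmatwo} therefore yields the existence of $\widehat{x}^{sc}$, and induction on rank shows that $x$ is bounded $sc$-sortable in $W_{\langle s\rangle}$. This is not enough on its own --- the Kronecker example with $Q\colon 1\rightrightarrows 2$ and $x=s_2$ shows that a bounded $sc$-sortable element of $W_{\langle s\rangle}$ may well fail to be bounded $c$-sortable in $W$ --- so one must exploit the full hypothesis that $\widehat{x}^c$ exists in $W$. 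I would combine the identification $\C\La_x=\Sub\La_{(\widehat{x}^c)_{\langle s\rangle}}\cap\mod K\overline{Q}$ coming from Theorem \ref{main1}(a) with the cone-theoretic content of Proposition \ref{cam generate} and Lemma \ref{upper facet} to conclude that the preprojective $K\overline{Q}$-modules appearing in $\C\La_x$ remain preprojective when viewed as $KQ$-modules under the natural embedding $\mod K\overline{Q}\hookrightarrow\mod KQ$, which is well behaved because vertex~$1$ is a source of $Q$. Lemma \ref{c-bounded} then closes the argument. The main obstacle lies precisely at this step: the boundedness condition does not descend naively from $W$ to the parabolic subgroup $W_{\langle s\rangle}$, and the extra geometric input afforded by the existence of $\widehat{x}^c$ (and not merely of $\widehat{x}^{sc}$) is indispensable in order to lift the preprojective structure from $\overline{Q}$ back up to $Q$.
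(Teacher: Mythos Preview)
Your Case~1 is fine and is indeed parallel to the paper's inductive machinery, but Case~2 is not a proof: it is a description of where the difficulty lies, followed by an assertion that Proposition~\ref{cam generate} and Lemma~\ref{upper facet} will close the gap, with no indication of \emph{how}. The concrete claim you need---that the modules in $\C\La_x$, known by induction to be preprojective over $K\overline{Q}$, are preprojective over $KQ$---is false in general (your own Kronecker example shows this), and the passage from ``$\widehat{x}^c$ exists'' to this module-theoretic conclusion is exactly the content of the theorem you are trying to prove. Nothing in Proposition~\ref{cam generate} or Lemma~\ref{upper facet} speaks to preprojectivity of $KQ$-modules; those statements are about the geometry of the Cambrian cones near $H_s$, and they do not obviously interact with the AR-quiver of $KQ$ versus $K\overline{Q}$. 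So as written, Case~2 is a restatement of the problem, not a solution.

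The paper avoids this obstruction by arguing the contrapositive in a different decomposition. Given an unbounded $c$-sortable $x$, it descends an unrefinable chain of $c$-sortable elements to a \emph{minimal} unbounded $x_k\le x$. For such an $x_k$, Proposition~\ref{special case} uses representation theory (specifically, an Ext-projective generator argument together with the infinite leftmost word from \cite{ORT}) to show directly that $(\pi^c)^{-1}(x_k)$ is infinite, so $\widehat{x_k}$ does not exist by Lemma~\ref{lemmatwo}. Proposition~\ref{inductive} then propagates this failure upward along the chain to $x$. The point is that the hard step is isolated at a single, carefully chosen element $x_k$ (one at the boundary between bounded and unbounded), where the torsion-theoretic structure is rigid enough to analyse, rather than being spread across a rank-reduction that does not respect boundedness.
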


In the Dynkin case, the statement follows from \cite{R3}, so that we assume that $Q$ is non-Dynkin in this section.

Before proving the theorem, we first deal with a special class of $c$-sortable elements.

\begin{prop}\label{special case}
Let $x$ be a $c$-sortable element. 
Assume that $x$ is unbounded, but any $c$-sortable element $x'$ with $x'<x$ is bounded.
Then we have $\{w\in W\ |\ \pi^c(w)=x\}$ is infinite.
\end{prop}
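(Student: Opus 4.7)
By Lemma \ref{lemmatwo}, the conclusion $\sharp\{w\in W : \pi^c(w)=x\}=\infty$ is equivalent to the non-existence of $\widehat{x}^c$. The plan is to prove this by induction on $\ell(x)$, splitting into two cases based on whether the initial reflection $s := s_1$ of $c$ is a left descent of $x$.

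If $\ell(sx) < \ell(x)$, then $sx$ is $(scs)$-sortable, and \cite[Proposition 2.29]{RS2} gives an order-preserving bijection between $\{w \in W : \pi^c(w) = x\}$ and $\{w \in W : \pi^{scs}(w) = sx\}$. It suffices to check that the hypotheses of the proposition transfer to $sx$ with respect to $scs$ (regarded as a Coxeter element of $Q' = \mu_1 Q$): using Theorem \ref{main1}(a) together with Lemma \ref{c-bounded}, the reflection functor $R^-$ (an equivalence between $\mod KQ'/[S_1']$ and $\mod KQ/[S_1]$, which preserves the preprojective components) gives that $x$ is bounded with respect to $c$ if and only if $sx$ is bounded with respect to $scs$, and analogously for all $c$-sortable predecessors of $x$. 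Induction on length then yields an infinite fibre for $sx$ under $\pi^{scs}$, hence for $x$ under $\pi^c$.

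If $\ell(sx) > \ell(x)$, then $x \in W_{\langle s\rangle}$ and $x$ is $sc$-sortable in $W_{\langle s\rangle}$. Here the plan is to construct infinitely many elements $u$ in the fibre directly: for any $u \in W$ satisfying $u_{\langle s\rangle} = x$ and $\ell(su) > \ell(u)$, Definition \ref{map pi} yields $\pi^c(u) = \pi^{sc}(x) = x$ (using that $\pi^{sc}(x) = x$ since $x$ is $sc$-sortable). Such $u$ exist in infinite supply because $Q$ is non-Dynkin and hence $W$ is infinite; geometrically, this corresponds via Theorem \ref{pidown fibers}(1) and Proposition \ref{cam generate} to the fact that $\Cone_c(x)$, generated by $\Cone_{sc}(x) \subset H_s$ and the ray $\mathbb{R}_{\geq 0}\rho_s$, contains infinitely many $W$-chambers accumulating toward $\rho_s$ in the non-Dynkin setting. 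The base case $\ell(x)=1$ falls under this second case, since $x = s_i$ forces $i \neq 1$ (as $x = s_1 \leq c$ would be bounded).

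The main obstacle lies in the second case: producing infinitely many $u$ with $\ell(su) > \ell(u)$ and $u_{\langle s\rangle} = x$ even when the parabolic subgroup $W_{\langle s\rangle}$ is finite (i.e., $\overline{Q}$ is Dynkin). Naive extensions $u = x \cdot v$ by $v$ starting with $s$ can fail, since braid relations may produce alternative reduced expressions for $u$ that begin with $s$, breaking the condition $\ell(su) > \ell(u)$. Overcoming this requires either a careful combinatorial choice of $v$ exploiting the infinite structure of $W$ outside $W_{\langle s\rangle}$, or a direct geometric argument enumerating the $W$-chambers inside $\Cone_c(x)$.
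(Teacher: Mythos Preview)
Your proposal takes a combinatorial/geometric route quite different from the paper, which constructs an explicit infinite sequence $(v_j)_{j\geq 0}$ from the infinite leftmost word associated to ${}^{\perp}(\C\La_x)\cap\PP$ and then uses Theorem~\ref{main1} together with the torsion theory of $\Fac I_{v_j}$ (Lemmas~\ref{preproj ext}, \ref{bounded}, \ref{no bounded}) to show $\pi^c(v_l)=x$ for all large $l$.

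Your Case~2, however, is not merely unfinished --- it is missing the key idea.  You propose to produce infinitely many $u$ with $\ell(su)>\ell(u)$ and $u_{\langle s\rangle}=x$, justified by ``$W$ is infinite'' and by the claim that $\Cone_c(x)$ contains infinitely many $W$-chambers.  But neither heuristic uses the hypothesis that $x$ is unbounded, and that hypothesis is essential: there exist bounded $c$-sortable elements $x$ with $\ell(sx)>\ell(x)$ (e.g.\ $x=e$, or any $sc$-sortable element in the non-Dynkin setting with preprojective $\C\La_x$), and for these the fibre $\{w:\pi^c(w)=x\}$ is finite by Theorem~\ref{bound property}(b).  So the assertion that $\Cone_c(x)$ contains infinitely many $W$-chambers is false without unboundedness; moreover, by Theorem~\ref{pidown fibers}(2) it is \emph{equivalent} to the conclusion you are trying to prove, so invoking it is circular.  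The real work is to extract from the representation-theoretic content of unboundedness (namely that $\C\La_x$ escapes the preprojective component while every proper torsion-free subcategory does not) some mechanism producing the infinite family --- and this is precisely what the paper's construction via the infinite leftmost word and Lemma~\ref{no bounded} accomplishes.

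A secondary issue: in Case~1 you assert that the ``minimal unbounded'' hypothesis transfers from $x$ (with respect to $c$) to $sx$ (with respect to $scs$).  Boundedness itself transfers via the reflection functor as you say, but you should also check that every $scs$-sortable $y'<sx$ corresponds to a $c$-sortable element strictly below $x$; the correspondence between $scs$-sortable and $c$-sortable elements is not a straightforward bijection, since it depends on whether $s$ (now terminal in $scs$) is a descent.
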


\begin{remk}
The above condition on $x$ is equivalent to saying that 
any proper torsion free subcategory of $\C\La_x$ is in the preprojective component by Lemma \ref{c-bounded}. 
Equivalently any torsion class properly containing ${}^{\perp}(\C \La_x)$ is cofinite.
\end{remk} 

From now on, we fix the above $x$, that is, $x$ is an unbounded $c$-sortable element, such that any $c$-sortable element $x'$ with $x'<x$ is bounded.
Then we give the following observation. 

\begin{lemm}\label{preproj ext}
In the above setting, there exists an $\Ext$-projective $T$ of ${}^{\perp}(\C \La_x)$ such that $T$ is preprojective and 
$$\Fac T={}^{\perp}(\C \La_x).$$
\end{lemm}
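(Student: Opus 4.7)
The plan is to extract, via Theorem~\ref{bij support tilt}, a basic support tilting $KQ$-module $T$ with $\Fac T = {}^{\perp}(\C\La_x)$ and then argue that every indecomposable summand of $T$ is preprojective. Since $x$ is $c$-sortable, $\C\La_x$ is a finite torsion-free class of $\mod KQ$ by Corollary~\ref{sortable bij}, and is automatically functorially finite; consequently ${}^{\perp}(\C\La_x)$ is functorially finite in $\mod KQ$, and Theorem~\ref{bij support tilt} provides a basic support tilting $T$ with $\Fac T = {}^{\perp}(\C\La_x)$ whose indecomposable summands are precisely the indecomposable Ext-projectives of ${}^{\perp}(\C\La_x)$.

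To show $T$ is preprojective, I argue by contradiction. Suppose some indecomposable summand $T_0$ of $T$ is non-preprojective. Since indecomposable projective $KQ$-modules are preprojective, $T_0$ is non-projective, so the Auslander--Reiten characterization of Ext-projectives in a torsion class yields $\tau T_0 \in \C\La_x$, with $\tau T_0$ itself non-preprojective (the AR translate preserves the preprojective component among non-projective modules).

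The contradiction will come from combining the minimality hypothesis on $x$ with a $\tau$-tilting mutation in the Adachi--Iyama--Reiten framework. Mutating $T$ at the non-projective summand $T_0$ in the direction that enlarges the torsion class produces a neighbouring support tilting module $T'$ with $\Fac T' \supsetneq \Fac T$; its torsion-free class $\C_{T'}$ is a finite torsion-free subcategory properly contained in $\C\La_x$, so by Corollary~\ref{sortable bij} $\C_{T'} = \C\La_y$ for some $c$-sortable $y$, with $y < x$ by Proposition~\ref{pathorders}. The minimality hypothesis then forces $y$ to be bounded, so $\C\La_y$ consists entirely of preprojective $KQ$-modules. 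The main obstacle is making this mutation step precise and then tracing the non-preprojective module $\tau T_0$---the Ext-injective of $\Fac T$ associated to $T_0$---so as to deduce its location in or interaction with the preprojective class $\C\La_y$, thereby forcing the contradiction. An alternative that could sidestep the mutation framework entirely would be to construct $T$ directly as a direct sum of $KQ$-summands of $I_w$ (which are preprojective by Theorem~\ref{ORT-thm}) for suitably chosen $w$ in the fibre $\pi^c(w) = x$, using Theorem~\ref{main1}(b) to identify the resulting $\Fac T$ with ${}^{\perp}(\C\La_x)$.
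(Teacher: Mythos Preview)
Your approach has a fundamental flaw: you are trying to prove that the \emph{full} basic support tilting module $T$ with $\Fac T = {}^{\perp}(\C\La_x)$ is preprojective, but this stronger statement is false. Take $Q$ of type $\widetilde A_2$ with arrows $1\to2$, $2\to3$, $1\to3$ and $x=s_1s_2s_3s_2$ (this satisfies the hypotheses of Proposition~\ref{special case}; cf.\ Example~\ref{exam bounded}(b)). The fourth layer $L^4_{\xx}$ has dimension vector $s_1s_2s_3(\alpha_2)=\alpha_1+\alpha_3$; one checks via the Coxeter transformation that this module is $\tau$-periodic and hence regular. It is a summand of $T_{\xx}$, so it is a non-injective Ext-injective of $\C\La_x$. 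By the Auslander--Reiten formula its $\tau^{-1}$-translate $S_2$ is a non-projective Ext-projective of ${}^{\perp}(\C\La_x)$, and $S_2$ is regular. Thus the basic support tilting module of ${}^{\perp}(\C\La_x)$ has a non-preprojective summand.

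This is exactly why your contradiction never closes. After your mutation at $T_0$ (which, incidentally, goes in a fixed direction---you do not get to choose it), the smaller torsion-free class $\C\La_y$ consists of preprojective modules, but $\tau T_0$ lies in $\C\La_x\setminus\C\La_y$, not in $\C\La_y$; nothing forces a non-preprojective module into a preprojective class. The ``main obstacle'' you flag is not a technicality to be overcome but a reflection of the fact that the target statement is false.

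The lemma only asks for \emph{some} Ext-projective module $T$ (not necessarily the maximal one) that is preprojective and still generates the torsion class. The paper obtains it by running your mutation idea in the opposite direction: let $x'$ be $x$ with the last letter of its $c$-sorting word removed. Then $x'<x$ is bounded, so ${}^{\perp}(\C\La_{x'})$ is cofinite and its full Ext-projective $T'$ is preprojective. Since ${}^{\perp}(\C\La_{x'})$ covers ${}^{\perp}(\C\La_x)$, the mutation theory of \cite{AIR} gives one summand $T'_1$ of $T'$ such that $T'/T'_1$ (together with a new summand $T_1$) is the Ext-projective of ${}^{\perp}(\C\La_x)$; moreover $T_1\in\Fac(T'/T'_1)$, so already $\Fac(T'/T'_1)={}^{\perp}(\C\La_x)$. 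Taking $T=T'/T'_1$ gives a preprojective Ext-projective that generates. In the example above this yields $T=\tau^{-1}P_1\oplus\tau^{-1}P_3$, and the possibly non-preprojective summand $T_1=S_2$ is simply discarded.
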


\begin{proof}
  Let $\xx=s_{u_1}\cdots s_{u_l}$ be a $c$-sortable reduced expression of $x$,
  and let $\xx'=s_{u_1}\cdots s_{u_{l-1}}$.  Let $T'$ be the sum of the $\Ext$-projective indecomposable modules of ${}^\perp (\C \Lambda_{x'})$.  We know that
  $T'$ is preprojective and $\Fac T'={}^\perp (\C \Lambda_{x'})$ by our assumption. In the poset of
  torsion free classes, $\Sub\Lambda_{x}$ covers $\Sub\Lambda_{x'}$, so
  ${}^\perp (\C \Lambda_{x'})$ covers ${}^\perp (\C \Lambda_{x})$ in the poset of
  torsion classes.  It follows from \cite{AIR} that the Ext-projectives of
  ${}^\perp (\C \Lambda_{x})$
  can be obtained from $T'$ by a single mutation; that
  is to say, there is an indecomposable summand $T'_1$ of $T'$ such that
  the summands of $T'/T'_1$ are also Ext-projectives of ${}^\perp (\C \Lambda_{x})$,
  and the remaining indecomposable Ext-projective of ${}^\perp (\C \Lambda_{x})$
  is either 0 or is the righthand term in the following exact sequence:
  $$ T'_1 \rightarrow U \rightarrow T_1 \rightarrow 0,$$
  where $U$ is a minimal left $\add(T'/T'_1)$-approximation of $T'_1$.  Since $T_1$ is a
  quotient of $U\in \Fac (T'/T'_1)$, we know that $\Fac(T'/T'_1)=\Fac(T'/T'_1 \oplus T_1)= {}^\perp (\C \Lambda_{x})$,
  so $T'/T'_1$ satisfies the hypotheses of the lemma.
  \end{proof}





Next we recall a result of \cite{ORT}.
Fix the infinite word  $\cc^\infty=\cc \cc
\cc\ldots,$ where $\cc=s_1\ldots s_n$. 
We say that an infinite subword $\ww$ of $\cc^\infty$ is {\it leftmost} if,
for all $m\geq 0$, the subword of $\cc^\infty$ consisting of the 
first $m$ letters of $\ww$ is leftmost. Then we have the following result. 

\begin{theorem}\cite[Theorem 8.1]{ORT}
There is a bijection between the leftmost subwords of $\cc^\infty$ and the quotient closed subcategories of the preprojective component $\PP$. The word corresponds to the missing indecomposable modules of the subcategory.
\end{theorem}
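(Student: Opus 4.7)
The plan is to reduce the theorem to the finite case handled by Theorem \ref{ORT-thm} via a truncation-and-limit argument, moving between infinite leftmost subwords of $\underline{c}^\infty$ and compatible chains of elements of $W$.

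For the forward map, given a leftmost infinite subword ${\underline w}$ of $\underline{c}^\infty$, its length-$m$ prefix ${\underline w}_m$ is by definition the leftmost reduced expression of some $w_m \in W$. Theorem \ref{ORT-thm} then yields cofinite quotient-closed subcategories $\overline{\C I_{w_m}} \subset \mod KQ$, whose missing indecomposable preprojectives are exactly those picked out by ${\underline w}_m$. Restricting to $\PP$ and taking the decreasing intersection
\[
\Phi({\underline w}) := \bigcap_{m\geq 0} \bigl(\overline{\C I_{w_m}} \cap \PP \bigr),
\]
I would obtain a subcategory of $\PP$ whose missing indecomposables correspond precisely to the letters of ${\underline w}$. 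Each $\overline{\C I_{w_m}} \cap \PP$ is quotient-closed in $\PP$ (being the restriction of a quotient-closed subcategory of $\mod KQ$), so $\Phi({\underline w})$ is quotient-closed in $\PP$.

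For the inverse map, given a quotient-closed subcategory $\Aa \subset \PP$, I would read off the subword $\Psi(\Aa) \subset \underline{c}^\infty$ consisting of the letters corresponding to indecomposables missing from $\Aa$. The main task is to show $\Psi(\Aa)$ is leftmost. For each $m \geq 0$, I would form the subcategory $\Bb_m \subset \mod KQ$ which is the additive hull of $\Aa$, all non-preprojective indecomposables, and all indecomposable preprojectives corresponding to letters of $\underline{c}^\infty$ at positions $> m$. Once one shows $\Bb_m$ is cofinite and quotient-closed in $\mod KQ$, Theorem \ref{ORT-thm} gives $\Bb_m = \overline{\C I_{w_m}}$ for some $w_m \in W$ whose leftmost reduced expression is exactly the length-$m$ prefix of $\Psi(\Aa)$; since this holds for every $m$, $\Psi(\Aa)$ is leftmost. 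Bijectivity of $\Phi$ and $\Psi$ is then immediate, since both maps are determined by recording the indecomposable preprojectives missing from the subcategory.

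The main obstacle is verifying quotient-closure of $\Bb_m$ in $\mod KQ$. The delicate case is when $N \in \Bb_m$ is an indecomposable preprojective corresponding to a letter at position $> m$ and $N \twoheadrightarrow N'$ is an indecomposable preprojective quotient corresponding to a letter at position $\leq m$; one must then show $N' \in \Aa$. This requires a precise compatibility between the AR-theoretic surjections among indecomposable preprojectives and the leftmost ordering of letters in $\underline{c}^\infty$, followed by an application of quotient-closure of $\Aa$ in $\PP$. Establishing this compatibility, which is the technical heart of the argument, is precisely where the leftmost condition on subwords must be leveraged via the structural description of $\overline{\C I_w}$ given in Theorem \ref{ORT-thm}.
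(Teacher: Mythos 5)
The paper does not actually prove this statement: it is quoted verbatim from \cite[Theorem 8.1]{ORT}, so your proposal can only be compared with the argument in that source. Your overall strategy --- truncate a quotient closed subcategory $\Aa\subset\PP$ to cofinite quotient closed subcategories of $\mod KQ$, invoke Theorem \ref{ORT-thm}, and pass to the limit --- is essentially the strategy of [ORT, Section 8], and the forward map is unproblematic: each $\overline{\C I_{w_m}}\cap\PP$ is quotient closed in $\PP$ and an intersection of quotient closed subcategories is quotient closed. (You should, however, also account for the \emph{finite} leftmost subwords, which correspond to the cofinite quotient closed subcategories of $\PP$; this follows from Theorem \ref{ORT-thm} together with the $m\to\infty$ case of your $\Bb_m$ construction.)

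The genuine gap is exactly where you flag ``the technical heart'': quotient closure of $\Bb_m$ is never proved, and the case you isolate is not the right one. First, quotient closure must be verified for surjections $M\twoheadrightarrow N'$ with $M$ an arbitrary object $A\oplus B\oplus C$ of $\Bb_m$ ($A\in\add\Aa$, $B$ a sum of indecomposable preprojectives at positions $>m$, $C$ without preprojective summands), not merely for quotients of a single indecomposable: a quotient of a direct sum need not decompose into quotients of the summands. The mechanism that saves the argument is Hom-vanishing rather than a ``compatibility of surjections with the ordering'': $\Hom_{KQ}(C,N')=0$ because $N'$ is preprojective, and $\Hom_{KQ}(\tau^{-j}P_i,\tau^{-j'}P_{i'})=0$ whenever $j>j'$ (all paths in the preprojective component of the AR quiver weakly increase the $\tau^{-}$-exponent; equivalently, a nonzero map from an indecomposable non-projective module to a projective would split off a projective summand). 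Hence $A$ alone surjects onto $N'$ and quotient closure of $\Aa$ in $\PP$ applies. Second, this argument only works when you truncate at slice boundaries, i.e.\ $m=rn$, adding back all $\tau^{-j}P_i$ with $j\ge r$: for $m$ in the interior of a slice there do exist nonzero maps $P_i\to P_{i'}$ (and their $\tau^{-}$-shifts) with $i>m\ge i'$, i.e.\ from positions $>m$ to positions $\le m$ under the paper's admissible labelling, so your reduction to quotient closure of $\Aa$ breaks down there. Truncating only at multiples of $n$ still suffices for the limit, since every prefix of a leftmost word is leftmost. As written, the proposal identifies the right outline but leaves its crucial step both unproved and mis-stated.
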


In our case, because $x$ is unbounded $c$-sortable, ${}^{\perp}(\C \La_x)$ is not cofinite by Lemma \ref{c-bounded}. 
Then, from the above theorem, there exists the infinite leftmost word 
$$s_{u_1}\dots s_{u_j}\dots$$ 
which corresponds to the indecomposable modules which are not in ${}^{\perp}(\C \La_x)\cap\PP$. 

Under the above setting, let $v_j:=s_{u_1}\dots s_{u_j}$ for $l\geq 0$. Then we have ${}^{\perp}(\C \La_x)\cap\PP=\bigcap_{j\geq0} \C I_{v_j}$. 
Note that 
we have $j\leq i$ if and only if $v_j\leq v_i$, or equivalently, $\Fac I_{v_j}\supset \Fac I_{v_i}$ by Proposition \ref{orders}. 
Our aim is to show that we can take arbitrary large $j$ such that $\pi^c(v_j)=x$.

By modifying Proposition \ref{C-facI}, we obtain the next statement.

\begin{lemm}\label{bounded}
Under the above setting, for any $v_j$, we have 
\[\Fac I_{v_j}\cap\mod KQ \supset {}^{\perp}(\C \La_x).\]
\end{lemm}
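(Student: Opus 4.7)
The plan is to adapt the proof of Proposition \ref{C-facI}, replacing the ambient cofinite torsion class $\overline{\C I_w}$ by the (non-cofinite) torsion class ${}^\perp(\C\La_x)$ in $\mod KQ$.

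By Corollary \ref{torsionpath}, $\Fac I_{v_j}\cap\mod KQ$ is a torsion class of $\mod KQ$, hence closed under extensions. By Lemma \ref{preproj ext}, there is a preprojective Ext-projective module $T$ in ${}^\perp(\C\La_x)$ with $\Fac T={}^\perp(\C\La_x)$, so it suffices to show that every indecomposable summand $P$ of $T$ lies in $\Fac I_{v_j}$. Since $T\in\bigcap_k \C I_{v_k}\subset \C I_{v_j}$, each such $P$ is a $KQ$-summand of $(I_{v_j})_{KQ}$.

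Following Proposition \ref{C-facI}, for $N$ large enough $P$ embeds as a $KQ$-summand of some finite-dimensional $\La$-module quotient $E$ of $I_{v_j}$, giving a $KQ$-split exact sequence $0\to P\to E\to M\to 0$ with $E,M\in\Fac I_{v_j}$. Lemma \ref{ORT relative} then identifies its extension class in $\underline{\Ext}_\La^1(M,P)$ with a quotient of $\Hom_{KQ}(\tau^-M_{KQ},P)\simeq D\Ext^1_{KQ}(P,M_{KQ})$, so the sequence will split over $\La$ — placing $P$ inside $\Fac I_{v_j}$ — as soon as $\Ext^1_{KQ}(P,M_{KQ})=0$.

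The main technical hurdle will be arranging this Ext-vanishing. Ext-projectivity of $P$ in ${}^\perp(\C\La_x)$ gives it whenever $M_{KQ}\in{}^\perp(\C\La_x)$, but since ${}^\perp(\C\La_x)$ is not cofinite, the containment used in Proposition \ref{C-facI} is no longer immediate. My plan is to exploit the layer description of Theorem \ref{subobj}: by choosing $w'=v_j\cdot c^N$ for $N$ large and forming $E=I_{v_j}/I_{w'}$, and then passing to a suitable further $\La$-quotient in which exactly the layers whose $KQ$-realisations lie in $\C\La_x$ (that is, the layers indexed by the letters of the infinite leftmost word $s_{u_1}s_{u_2}\cdots$) have been cut out, the remaining $M$ should have $M_{KQ}$ obtained as an extension of layers lying in ${}^\perp(\C\La_x)\cap\PP=\bigcap_k\C I_{v_k}$. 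Extension-closedness of ${}^\perp(\C\La_x)$ in $\mod KQ$ will then yield $M_{KQ}\in{}^\perp(\C\La_x)$, completing the argument.
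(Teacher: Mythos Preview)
Your framework matches the paper's up to the Ext-vanishing step, but the plan for that step has a real gap, and the paper's actual argument is both simpler and different.

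Your proposed construction---take $E=I_{v_j}/I_{v_jc^N}$ and then pass to a further $\La$-quotient by ``cutting out'' the layers indexed by the letters of the infinite leftmost word---is not well-defined. Layers give a \emph{filtration}, not a direct sum decomposition, so one cannot excise an arbitrary subset of them and obtain a $\La$-quotient; only an initial or terminal segment of the filtration produces a submodule or quotient. Moreover the word $v_jc^N$ is typically not reduced, its layers are not $KQ$-modules in general, and there is no clean identification of those layers with the letters $s_{u_{j+1}},s_{u_{j+2}},\dots$ of the infinite leftmost word. So the claim that the remaining $M$ has $M_{KQ}\in{}^\perp(\C\La_x)$ is not justified and, as stated, the construction does not go through.

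The paper avoids this entirely by a monotonicity trick you have missed. One first proves the inclusion only for \emph{sufficiently large} $j$: since $P$ is preprojective and Ext-projective in ${}^\perp(\C\La_x)$, the finitely many indecomposables $N$ with $\Ext^1_{KQ}(P,N)\neq 0$ all lie outside ${}^\perp(\C\La_x)\cap\PP=\bigcap_k\C I_{v_k}$, hence outside $\C I_{v_j}$ once $j$ is large. For such $j$, since $M_{KQ}$ lies in $\overline{\C I_{v_j}}$, one gets $\Ext^1_{KQ}(P,M_{KQ})=0$ and the sequence splits over $\La$ exactly as in Proposition~\ref{C-facI}. Finally, since $i\leq j$ implies $\Fac I_{v_i}\supset\Fac I_{v_j}$, the containment $P\in\Fac I_{v_j}\cap\mod KQ$ for large $j$ propagates back to all $j$. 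This two-step ``large $j$, then monotone descent'' is the missing idea.
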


\begin{proof}
By Lemma \ref{preproj ext}, it is enough to show that 
we have $P\in\Fac I_{v_j}\cap\mod KQ$ for any indecomposable preprojective Ext-projective $P$ of ${}^{\perp}(\C \La_x)$. 
Since ${}^{\perp}(\C \La_x)\cap\PP=\bigcap_{j\geq0} \C I_{v_j}$, we have $P\in\C I_{v_j}$ for all $j$.  Choose $j$ sufficiently large that $\Ext^1_{KQ}(P,M)=0$ for
all $M$ in $\C I_{v_{j}}$.  
Thus, as in the proof of Proposition \ref{C-facI}, we have a $KQ$-split short exact sequence of 
finite dimensional $\Lambda$-modules 
$$0\rightarrow P \rightarrow E \rightarrow M\rightarrow 0$$
with $E,M$ in $\Fac I_{v_j}$.

Since $M_{KQ}\in\C I_{v_j}$, we have $\Ext^1_{KQ}(P,M_{KQ})=0$. 
Because Lemma \ref{ORT relative} implies that $\underline{\Ext}_\Lambda^1(M,P)$ is a quotient of $\Hom_{KQ}(\tau^-(M_{KQ}),P)\simeq D\Ext^1_{KQ}(P,M_{KQ})$, 
we have $P\in\Fac I_{v_j}\cap\mod KQ$.  This holds for all $j$ sufficiently
large, and since 
$\Fac I_{v_j}\cap\mod KQ$ is monotonically decreasing, it holds for all $j$.
Thus $\Fac I_{v_j}\cap\mod KQ\supset {}^{\perp}(\C \La_x)$ for all $j$. 
\end{proof}



Moreover we have the following result. 

\begin{lemm}\label{no bounded}
Under the above setting,  there exists $k$ such that for all $l\geq k$ 
\[\Fac I_{v_l} \cap\mod KQ ={}^{\perp}(\C \La_x).\] 
\end{lemm}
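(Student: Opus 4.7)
The inclusion $\supset$ is Lemma \ref{bounded}, so the plan is to establish the reverse inclusion for all sufficiently large $l$. By Theorem \ref{main1}(b), we can rewrite
\[
\Fac I_{v_l}\cap\mod KQ \;=\; {}^{\perp}(\C\La_{\pi^c(v_l)}),
\]
so the claim amounts to showing $\pi^c(v_l)=x$ once $l$ is large enough. Combining this identity with Lemma \ref{bounded} and Proposition \ref{pathorders}, one immediately obtains $\pi^c(v_l)\leq x$ for every $l$.

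The next step is to observe that the map $\pi^c$ is monotone for the weak order: indeed, $\pi^c(v)$ is the maximum $c$-sortable element below $v$ by Theorem \ref{pai}, so if $v\leq v'$ then $\pi^c(v)$ is a $c$-sortable element lying below $v'$ and hence $\pi^c(v)\leq \pi^c(v')$. Applied to the strictly increasing sequence $v_1<v_2<\cdots$ (each step appends a single generator), this yields a weakly increasing chain
\[
\pi^c(v_1)\leq \pi^c(v_2)\leq \cdots \leq x.
\]
Since the weak-order interval $[\id,x]$ is finite, only finitely many $c$-sortable values can occur in this chain, so it stabilises: there exist $k\geq 1$ and a $c$-sortable $y\leq x$ with $\pi^c(v_l)=y$ for all $l\geq k$.

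It remains to rule out $y<x$. If this occurred, then by hypothesis $y$ would be bounded $c$-sortable, and Theorem \ref{bound property}(b) would guarantee the existence of $\widehat{y}$. But then every $v_l$ with $l\geq k$ would satisfy $v_l\leq \widehat{y}$, in particular $\ell(v_l)\leq \ell(\widehat{y})$, contradicting the fact that $\ell(v_l)=l\to\infty$. Hence $y=x$, giving $\pi^c(v_l)=x$ and therefore $\Fac I_{v_l}\cap\mod KQ = {}^{\perp}(\C\La_x)$ for all $l\geq k$. The main subtlety to spell out carefully is the monotonicity of $\pi^c$ together with the finiteness of $c$-sortable elements below $x$; the rest of the argument then follows formally.
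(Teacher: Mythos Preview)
Your proof is correct, and it takes a genuinely different route from the paper's argument.

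The paper argues directly on the torsion-class side: it first observes that the decreasing chain $\Fac I_{v_l}\cap\mod KQ$ stabilises at some torsion class $\JJ\supseteq{}^{\perp}(\C\La_x)$ (implicitly because the corresponding torsion-free classes all sit inside the finite category $\C\La_x$). It then uses the relation $\Fac I_{v_j}\cap\mod KQ\cap\PP\subset\C I_{v_j}$, together with $\bigcap_j\C I_{v_j}={}^{\perp}(\C\La_x)\cap\PP$, to conclude that $\JJ\cap\PP\subset{}^{\perp}(\C\La_x)\cap\PP$; hence if $\JJ$ were strictly larger than ${}^{\perp}(\C\La_x)$ it would be cofinite by the standing hypothesis on $x$, contradicting this containment.

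You instead pass immediately to the $c$-sortable side via Theorem \ref{main1}(b), use the monotonicity of $\pi^c$ (from Theorem \ref{pai}) and the finiteness of $[\id,x]$ to stabilise at some $y\leq x$, and then rule out $y<x$ by invoking Theorem \ref{bound property}(b) to produce $\widehat y$ and obtain the length bound $\ell(v_l)\leq\ell(\widehat y)$, which conflicts with $\ell(v_l)=l$. This is cleaner in that it avoids any explicit analysis of the preprojective component and the categories $\C I_{v_j}$; it also yields $\pi^c(v_l)=x$ directly, which is precisely what the paper extracts afterwards in the proof of Proposition \ref{special case}. The trade-off is that you rely on Theorem \ref{bound property}(b) (and through it on Lemmas \ref{lemmatwo} and \ref{max is max}), whereas the paper's argument for this lemma stays within more elementary torsion-theoretic facts. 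There is no circularity, since Theorem \ref{bound property} is established in Section 5 independently of anything in Section 6.
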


\begin{proof}
Assume that 
$\Fac I_{v_j} \cap\mod KQ \supsetneq{}^{\perp}(\C \La_x)$ for any $j$. Then our assumption shows the torsion class $\Fac I_{v_j} \cap\mod KQ$ is cofinite. 
 Since ${}^{\perp}(\C \La_{\pi^c(v_j)})=\Fac I_{v_j} \cap\mod KQ \supsetneq{}^{\perp}(\C \La_x)$, we have $\C \La_{\pi^c(v_j)}\subsetneq\C \La_x$. Because $\C \La_x$ is finite, there are only finitely many distinct torsion classes in $\{\Fac I_{v_j} \cap\mod KQ\ |\ j\geq0\}$ and hence $\bigcap_{j\geq0}(\Fac I_{v_j}\cap\mod KQ\cap\PP)$ is also cofinite. 
 
On the other hand, since $\Fac I_{v_j} \cap\mod KQ\cap\PP\subset\C I_{v_j}$, 
we have 
$$\bigcap_{j\geq0}(\Fac I_{v_j}\cap\mod KQ\cap\PP)\subset \bigcap_{j\geq0} \C I_{v_j}={}^{\perp}(\C \La_x)\cap\PP,$$
which is a contradiction because the right hand side is not cofinite. 

Therefore, by Lemma \ref{bounded}, there exists $k$ such that $\Fac I_{v_l}\cap\mod KQ={}^{\perp}(\C \La_x)$ for all $l\geq k$.
\end{proof}

Then we give a proof of Proposition \ref{special case}.

\begin{proof}[Proof of Proposition \ref{special case}]
By Theorem \ref{main1} and Lemma \ref{no bounded}, 
there exists $k$ such that, for all $l\geq k$, 
\[{}^{\perp}(\C \La_{\pi^c(v_l)})=\Fac I_{v_l}\cap\mod KQ ={}^{\perp}(\C \La_{x}).\] 
Thus Proposition \ref{pathorders} shows that $\pi^c(v_l)=x$ for all $l\geq k$ and the conclusion follows.
\end{proof}

Finally, we obtain a proof of Theorem \ref{general case}.

\begin{proof}[Proof of Theorem \ref{general case}]
By Theorem \ref{bound property}, (i) implies (ii). 
We show that (ii) implies (i).

Let $x$ be a $c$-sortable element which is not bounded $c$-sortable. 
Take an unrefinable chain of $c$-sortable elements $x_1<\cdots<x_l=x$.
Then there exists $x_{k}$ for some $1\leq k \leq l$ satisfying the condition of Proposition \ref{special case}. 
Therefore we have $\{w\in W\ |\ \pi^c(w)=x_{k}\}$ is infinite.  
Then, Proposition \ref{inductive} shows the claim.  
The second statement follows from the first statement. 
\end{proof}

Consequently, we give a proof of the following conjecture, which was shown in the Dynkin case \cite[Proposition 11.4]{ORT}.

\begin{cor}\cite[Conjecture 11.1]{ORT}\label{torsion conj}
The following conditions are equivalent. 
\begin{itemize}
\item[(i)] $\overline{\C I_w}$ 
is a torsion class.
\item[(ii)] For every $i\in Q_0$ such
that $\ell(ws_i)>\ell(w)$, we have that $\pi^c(ws_i)>\pi^c(w)$.\end{itemize}
\end{cor}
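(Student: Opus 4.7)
The plan is to combine Theorem \ref{bound property}, Theorem \ref{general case}, and Lemma \ref{max is max}, reducing the equivalence to a statement about maximal versus maximum elements in $\pi^c$-fibres.

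First I would record the elementary fact that $\pi^c$ is order-preserving for the weak order. Indeed, by Theorem \ref{pai}, $\pi^c(v)$ is the maximum $c$-sortable element below $v$, so if $v \leq w$ any $c$-sortable element below $v$ lies below $w$, forcing $\pi^c(v) \leq \pi^c(w)$. In particular, whenever $\ell(ws_i) > \ell(w)$ we have $ws_i > w$ and hence $\pi^c(ws_i) \geq \pi^c(w)$. Therefore condition (ii) is simply the assertion that $w$ is a \emph{maximal} element of the fibre $(\pi^c)^{-1}(\pi^c(w))$ with respect to the weak order.

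Next I would establish that (i) is equivalent to $w$ being the \emph{maximum} of $(\pi^c)^{-1}(\pi^c(w))$. If (i) holds, Theorem \ref{bound property}(a) gives that $\pi^c(w)$ is bounded $c$-sortable and $w = \widehat{\pi^c(w)}$, which by Definition \ref{def hat} is exactly the maximum of the fibre. Conversely, if $w$ is the maximum, then $\pi^c(w)$ is $c$-antisortable with $\widehat{\pi^c(w)} = w$; by Theorem \ref{general case}, $\pi^c(w)$ is then bounded $c$-sortable, so Theorem \ref{bound property}(b) yields that $\overline{\C I_{w}} = \overline{\C I_{\widehat{\pi^c(w)}}}$ is a torsion class.

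Finally, to conclude (i) $\Leftrightarrow$ (ii) it remains to show that $w$ is maximum in its $\pi^c$-fibre iff it is maximal. The implication maximum $\Rightarrow$ maximal is tautological, while the converse is precisely Lemma \ref{max is max}. Chaining the equivalences gives (i) $\Leftrightarrow$ (maximum) $\Leftrightarrow$ (maximal) $\Leftrightarrow$ (ii).

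Since every ingredient is already in place, there is no genuine obstacle in this argument; the only subtlety worth flagging is the need for Lemma \ref{max is max} to upgrade ``maximal'' to ``maximum'' — without it, condition (ii) would only give that $w$ has no cover in its fibre, which is a priori weaker than being the $c$-antisortable representative and hence weaker than (i).
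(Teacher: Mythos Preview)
Your proof is correct and follows essentially the same route as the paper's: both identify condition (ii) with $w$ being maximal in its $\pi^c$-fibre, upgrade maximal to maximum via Lemma \ref{max is max}, and then use Theorem \ref{bound property} together with Theorem \ref{general case} to match ``maximum of the fibre'' with ``$\overline{\C I_w}$ is a torsion class''. Your write-up is in fact a bit more careful than the paper's (you explicitly justify that $\pi^c$ is order-preserving so that (ii) really is the maximality condition, and you correctly apply Theorem \ref{general case} to $\pi^c(w)$ rather than to $w$).
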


\begin{proof}
(i) implies (ii). This follows from Theorem \ref{bound property} (a). 

  (ii) implies (i). Condition (ii) amounts to saying that $w$ is maximal
  among $\{x \in W \mid \pi^c(x)=\pi^c(w)\}$. Lemma \ref{max is max} tells
  us that $w$ is actually the maximum element, i.e., 
  $w=\widehat{\pi^c(w)}$.  By Theorem \ref{general case}, $w$ is
  bounded $c$-sortable.  
Then Theorem \ref{bound property} (b) implies that $\overline{\C I_w}$ is a torsion class.
\end{proof}


\end{document}